\date{\today}
\date{\today}
\def\a{{\mathfrak a}}
\def\1{{\bf 1}}
\def\End{{\rm End}}
\def\deg{\text{deg}\,}
\def\w{\wedge}
\def\dbar{\bar\partial}
\def\C{{\mathbb C}}
\def\w{{\wedge}}
\def\P{{\mathbb P}}
\def\A{{\mathcal A}}
\def\Be{{\mathcal B}}
\def\B{{\mathbb B}}
\def\V{{\mathcal V}}
\def\Fitt{\text{Fitt}}
\def\M{{\mathcal M}}
\def\S{{\mathcal S}}
\def\Cu{{\mathcal C}}
\def\I{{\mathcal I}}
\def\reg{{\rm reg\,}}
\def\Hom{{\rm Hom }}
\def\codim{{\rm codim\,}}
\def\Im{{\rm Im\, }}
\def\Ker{{\rm Ker\,  }}
\def\Z{{\mathbb Z}}
\def\E{{\mathcal E}}
\def\Ok{{\mathcal O}}
\def\L{{\mathcal L}}
\def\Re{{\rm Re\,  }}
\def\L{{\mathcal L}}
\def\U{{\mathcal U}}
\def\J{{\mathcal J}}
\def\nbh{neighborhood }
\def\be{\begin{equation}}
\def\ee{\end{equation}}
\def\PM{{\mathcal{PM}}}
\def\BEF{\text{\tiny{bef}}}
\def\blow{{\nu}}
\newtheorem{thm}{Theorem}[section]
\newtheorem{lma}[thm]{Lemma}
\newtheorem{cor}[thm]{Corollary}
\newtheorem{prop}[thm]{Proposition}
\newtheorem*{thmA}{Theorem A}
\newtheorem*{thmB}{Theorem B}
\theoremstyle{definition}
\theoremstyle{remark}
\newtheorem{preremark}[thm]{Remark}
\newtheorem{preex}[thm]{Example}
\newenvironment{remark}{\begin{preremark}}{\qed\end{preremark}}
\newenvironment{ex}{\begin{preex}}{\qed\end{preex}}
\numberwithin{equation}{section}
\title[Global effective versions of the Brian{\c c}on-Skoda-Huneke
  theorem]{Global effective versions of the Brian{\c c}on-Skoda-Huneke
  theorem}
\begin{document}

\date{\today}

\author{Mats Andersson \& Elizabeth Wulcan}

\address{Department of Mathematics\\Chalmers University of Technology and the University of
Gothenburg\\S-412 96 G\"OTEBORG\\SWEDEN}

\email{matsa@chalmers.se \& wulcan@chalmers.se}


\thanks{The first author was
  partially supported by the Swedish
  Research Council. The second author was partially supported by
the Swedish  Research Council and by the NSF}

\begin{abstract} 

We prove global effective versions of the Brian\c con-Skoda-Huneke theorem.
Our results extend, to  singular varieties, a result of Hickel on the
membership problem in polynomial ideals in $\C^n$, and a related theorem of Ein and Lazarsfeld for smooth projective varieties.
The proofs rely on known geometric estimates and 
new results on multivariable residue calculus.

\end{abstract}

\maketitle

\section{Introduction}\label{puma}

Let $V$ be a reduced algebraic subvariety of $\C^N$ of pure dimension $n$.
If  $F_1, \ldots, F_m$ are  polynomials  in $\C^N$ with no common zeros on $V$, 
then by the  Nullstellensatz there are polynomials
$Q_j$ such that $\sum F_jQ_j=1$ on $V$. It was  proved by
Jelonek, \cite{Jel}, that if
$F_j$ have degree at most $d$, then  one can find  $Q_j$
such that
$$
\deg(F_jQ_j)\le c_m  d^\mu \deg V
$$
on $V$, where $c_m=1$ if $m\le n$, $c_m=2$ if $ m>n$, $\deg  V$ means
the degree of the closure of $V$ in $\P^N$, 
and,  throughout this paper,
$$
\mu:=\min(m,n).
$$
This result  generalizes Koll\'ar's theorem\footnote{In Koll\'ar's theorem $c_m=1$ 
even for $m>n$ (unless $d=2$, see also \cite{Sombra}) and this
estimate is optimal.}, \cite{Koll}, for $V=\C^n$ 
and does not require  any smoothness assumptions on  $V$.
The bound is optimal\footnote{In Koll\'ar's and
Jelonek's theorems, as well as in  \cite{Hick},  there are  more
precise results that take into account different degree
 bounds  $d_j$ of  $F_j$, but for simplicity, in this paper we always keep
all $d_j=d$.}
when $m\le n$ and almost optimal when $m>n$.
However, in view of various known results in the case when $V=\C^n$, one can expect
sharper degree estimates   if the common zero set of the polynomials $F_j$ 
behaves nicely at infinity in $\P^N$.

More generally one can take arbitrary polynomials $F_j$ 
of degree at most $d$ and look for a solution  $Q_j$ to
\begin{equation}\label{hummer}
F_1Q_1+\cdots +F_mQ_m=\Phi
\end{equation}
on $V$ with good degree estimates, provided that the polynomial $\Phi$ belongs to the
ideal $(F_j)$ generated by the $F_j$ on $V$. It follows from a result
of Hermann, \cite{Her}, that one can  choose $Q_j$ such that
$\deg(F_jQ_j)\le \deg \Phi+C(d,N)$, where $C(d,N)$ is like  $2(2d)^{2^N-1}$ for large $d$,
thus doubly exponential.
It is shown  in \cite{MM} that  this estimate cannot be substantially
improved for $V=\C^n$.
However, under additional hypotheses on $\Phi$ and 
the common zero
set of the $F_j$, much sharper estimates are possible.
In the extreme case when the polynomials $F_j$ have empty common zero set, even 
at infinity,
a  classical result of Macaulay, \cite{Macaul}, states that when
$V=\C^n$, one can solve \eqref{hummer}
with polynomials $Q_j$ such that 
$
\deg F_jQ_j\le \max(\deg \Phi, d(n+1)-n),
$
cf.\ Example~\ref{macex} below.  

\smallskip
By homogenization, this kind of effective results can be reformulated 
as geometric statements:  Let
 $z=(z_0,\ldots,z_N)$,  $z'=(z_1,\ldots,z_N)$, let
 $f_{i}(z):=z_0^{d}F_i(z'/z_0)$ be the $d$-homogenizations of $F_i$, and let
$\varphi(z):=z_0^{\deg \Phi}\Phi(z'/z_0)$.
Then there is a representation \eqref{hummer} on $V$ with $\deg(F_jQ_j)\le\rho$
if and only if there are  $(\rho-d)$-homogeneous forms $q_i$ on $\P^N$ such that
\begin{equation}\label{hoppla}
f_{1} q_1+\cdots +f_mq_m=z_0^{\rho-\deg\Phi}\varphi
\end{equation}
on the closure $X$ of $V$ in $\P^N$. As usual, we can consider  
$f_j$ as holomorphic  sections of 
(the restriction to
$X$ of) the line bundle $\Ok(d)\to\P^N$, $z_0^{\rho-\deg\Phi}\varphi$
as a section of
$\Ok(\rho)$, etc, so that \eqref{hoppla} becomes a statement about sections of
line bundles.

\smallskip

In this paper we present global effective versions of the  Brian\c con-Skoda-Huneke theorem:

\smallskip
{\it Let $\mathcal V$ be a germ of a reduced analytic set of pure
  dimension $n$ at 
  the origin in $\C^N$. There is a number $\mu_0$
such that if $a_1,\ldots,a_m,\phi$ are germs of holomorphic functions
at $0$,
$\ell\ge 1$,  and
$|\phi|\leq C |a|^{\mu+\mu_0+\ell-1}$ in a \nbh of $0$ in $\mathcal V$, where $C$ is a
positive constant and 
$
|a|^2=|a_1|^2+\cdots +|a_m|^2, 
$
then $\phi$ belongs to the ideal
$(a_1,\ldots,a_m)^\ell\subset \Ok_0$.}\footnote{
Often in the literature $\mu+\mu_0$ is replaced by a constant
independent of the number of generators ~$m$.}
\smallskip

If $\mathcal V$ is smooth, then one can take $\mu_0=0$; this is the
classical Brian\c con-Skoda theorem, \cite{BS}. The general case was proved by Huneke,
\cite{Hun},  by purely algebraic methods. An analytic proof appeared in \cite{ASS}.

\smallskip
Given polynomials  $F_1,\ldots, F_m$ on $V$, let $f_j$ denote the corresponding
sections of $\Ok(d)|_{X}$,  and let 
$\J_f$ be the coherent analytic sheaf on $X$ generated by $f_j$.
Furthermore, 
let $c_\infty$ be the maximal codimension of  the
so-called  {\it distinguished varieties} of the sheaf $\J_f$,
in the sense of Fulton-MacPherson,   
that are contained in 
$$
X_\infty:=X\setminus V,
$$
see Section~\ref{dist}.
If there  are no distinguished varieties contained in  $X_\infty$,
then we interpret $c_\infty$ as $-\infty$.
It is well-known that the codimension of a distinguished variety
cannot exceed the number $m$, see, e.g., Proposition~2.6 in \cite{EL}, and thus
\begin{equation}\label{bokslut}
c_\infty\le\mu.
\end{equation}
We let $Z^f$ denote the zero variety of $\J_f$ in $X$.

Our first result  involves the so-called {\it (Castelnuovo-Mumford) regularity}, $\reg X$,
of $X\subset\P^N$, see Section~\ref{lotta} for the definition.

\begin{thmA}
Assume that  $V$ is a reduced algebraic subvariety  of $\C^N$ of pure
dimension $n$ 
and let $X$ be its closure in $\P^N$. 

\smallskip
\noindent  (i)    There exists a number  $\mu_0$
such that if $F_1,\ldots,F_m$ are polynomials of degree
$\le d$ and $\Phi$ is a polynomial and
\begin{equation}\label{kaka}
|\Phi| /|F|^{\mu+\mu_0} \text{ is locally bounded on } V, 
\end{equation}
then there are polynomials $Q_1,\ldots, Q_m$ such that 
 \eqref{hummer} holds on $V$ and 
\begin{multline}\label{chimpans}
\deg (F_j Q_j)\leq 
\max \big(\deg \Phi + (\mu+\mu_0) d^{c_\infty} \deg X, (d-1)\min(m,n+1) +\reg X\big).
\end{multline}

\smallskip
\noindent (ii) If  $V$ is smooth, then there is a number $\mu'$ such that if
 $F_1,\ldots,F_m$ are polynomials of degree
$\le d$ and $\Phi$ is a polynomial and
\begin{equation}\label{kaka2}
|\Phi|/|F|^{\mu} \text{ is locally bounded on } V,
\end{equation}
 then there are polynomials $Q_1,\ldots, Q_m$ such that 
 \eqref{hummer} holds on $V$ and 
\begin{multline}\label{chimpans2}
\deg(F_j Q_j)\leq 
\max \big(\deg \Phi + \mu  d^{c_\infty}\deg X + \mu', (d-1)\min(m,n+1) +\reg X\big).
\end{multline}
If $X$ is smooth, then one can take $\mu'=0$.
\end{thmA}

\noindent 
There are  analogous results  for powers $(F_j)^\ell$  of $(F_j)$, see Theorem~\ref{borta}.

\smallskip
 
\noindent 
Note that if there  are no distinguished varieties of $\J_f$ contained in
$X_{\infty}$,  then $d^{c_\infty}=0$.

\smallskip
 

\noindent 
If 
$Z^f\cap X_{\text{sing}}=\emptyset$, 
then the conclusion in 
$(i)$ holds with $\mu_0=0$, see Remark~\ref{skola}.

\begin{remark} 
The number $\mu_0$ that appears in the proof of
Theorem ~A below only depends on the intrinsic variety $X$ and not on the
particular embedding $i:X\to \P^N$, cf.\ Remark~\ref{gbh}. 


\end{remark}

\begin{ex}
If we apply  Theorem~A to  Nullstellensatz  data,
i.e., $F_j$ with no common zeros on $V$
and $\Phi=1$,  we get back the
optimal result of Jelonek,  except for the
annoying factor $\mu+\mu_0$ in front of $d^{c_\infty}$. On the other hand,
$(\mu+\mu_0) d^{c_\infty}<d^\mu$ if $c_\infty<\mu$ and $d$ is large enough.
\end{ex}

\begin{ex}\label{macex} 
 If $f_j$ have no common zeros on $X$ (so that in particular
 $d^{c_\infty}=0$), then we can find a solution to 
 $F_1Q_1+\cdots +F_m Q_m=1$ on $V$ such that
$$
\deg F_jQ_j\le \max(\deg \Phi, (d-1)(n+1)+\reg X).
$$
If  $X=\P^n$, then $\reg X=1$ and hence we get back the
Macaulay theorem, cf., above. 
\end{ex}

\begin{remark} \label{CM}
Assume that $X\subset \P^N$ is Cohen-Macaulay; for instance, $X$ is a complete
intersection or even $X=\P^N$. 
Then $\reg X\le \deg X-(N-n)$,
see, \cite[Corollary 4.15]{Eis2}.
If in addition $m\le n$,  then the last entries in \eqref{chimpans} and \eqref{chimpans2}
can be omitted, i.e., we  get the sharper estimates 
$\deg (F_j Q_j)\leq \deg \Phi + (m+\mu_0) d^{c_\infty} \deg X$ and 
$\deg(F_j Q_j)\leq  \deg \Phi + m  d^{c_\infty}\deg X + \mu'$
in $(i)$ and $(ii)$, respectively, see the comment right after the
proof of Theorem ~A in Section ~\ref{helproofs}.

\end{remark}

\begin{remark}  If $X$ is smooth, then 
$$
\reg X\le (n+1)(\deg X-1)+1;
$$
this is Mumford's bound, see \cite[Example 1.8.48]{Laz}.
\end{remark}

\begin{ex}\label{uppstuds}
For $V=\C^n$,  Theorem~A gives the estimate
\begin{equation}\label{halvglad2}
\deg(F_j Q_j)\leq
\max \big(\deg \Phi + \mu d^{c_\infty}, d\min(m,n+1)-n\big). 
\end{equation}
This  estimate was proved by Hickel, \cite{Hick},
but with the term $\min(m,n+1)d^\mu$ rather than our
$\mu d^{c_\infty}$.  The ideas  in \cite{Hick} are similar
to the ones used in \cite{EL}.
If one applies  the geometric estimate
in \cite{EL},  rather than  the (closely related) so-called
refined Bezout estimate by Fulton-MacPherson 
that is used  in \cite{Hick}, one can replace the exponent 
$\mu$ by $c_\infty$. This refinement was pointed out already in
Example~1 in \cite{EL}.
\end{ex}

We have the following more abstract variant of Theorem~A.
It is a generalization to
nonsmooth varieties of the  geometric  effective Nullstellensatz
of   Ein-Lazarsfeld in  \cite{EL} (Theorem~\ref{elthm} below).
Let $X$ be a reduced projective variety.
Recall that if
 $L\to X$ is an {\it ample} line bundle, 
then there is a (smallest) number
$\nu_L$ such that $H^i(X,L^{\otimes s})=0$ for $i\ge 1$ and $s\ge \nu_L$,
cf., \cite[Ch.\ ~1.2]{Laz}.
When $X$ is smooth,
by Kodaira's vanishing theorem, $\nu_L$ is less than or equal to the least number $\sigma$
such that $L^\sigma\otimes K^{-1}_{X}$ is
strictly positive, where $K_X$ is the  canonical bundle.
In particular, if $V=\C^n$, i.e., $X=\P^n$,    then $\nu_{\Ok(1)}=-n$.

\begin{thmB}
Let $X$ be a reduced projective  variety of pure dimension $n$. There is a number $\mu_0$, only depending on $X$,
such that the following holds:
Let $f_1,\ldots,f_m$ be global holomorphic sections of an ample 
Hermitian line bundle $L\to X$,
and let $\phi$ be a section of  $L^{\otimes s},$
where  
\begin{equation}\label{nuv}
s\ge \nu_L +\min(m,n+1).
\end{equation}
If %
\begin{equation}\label{bs2}
|\phi| \leq C |f|^{\mu+\mu_0}, 
\end{equation}
then there are holomorphic sections $q_j$ of $L^{\otimes(s-1)}$ such that
\begin{equation}\label{likhet2}
f_1q_1+\cdots +f_mq_m=\phi.
\end{equation}
\end{thmB}

\noindent 
If $X$ is smooth we can choose $\mu_0=0$, see Theorem~\ref{elthm}.


\smallskip 

\noindent 
Let $\J_f$ be the ideal sheaf generated by $f_j$ and assume that
the associated distinguished varieties $Z_k$  have multiplicities $r_k$, cf., Section~\ref{dist}.
If we  assume that $\phi$ is in $\cap_k {\J(Z_k)}^{r_k(\mu+\mu_0)}$,
where $\J(Z_k)$ is the radical ideal associated with the distinguished variety $Z_k$,
then  \eqref{bs2} holds, and hence we have a representation \eqref{likhet2}.

\smallskip

\begin{ex}\label{orange}
Let $X$ be the cusp $\{z_1^2z_0^{p-2}-z_2^p=0\}\subset \P^2$, where
$p>2$ is odd. 
Then the sections $f=z_2$ of $L:=\Ok(1)|_X$ and
$\phi=z_0^{s-1}z_1$ of $L^{\otimes s}$ satisfy $|\phi|\leq C|f|^{\frac{p-1}{2}}$ on $X$ as soon as 
$s\geq 2$. 
However, $\phi$ is not in $(f)$ on $X$ at the singular point $\{z_1=z_2=0\}$ 
nor at $\{z_0=z_2=0\}$ (unless $p=3$).

One can check that $\gamma_p=(p-1)/2$ 
is the smallest
integer such that, for any choice of tuples $g_1,\ldots,
g_m$ of holomorphic germs at $\{z_1=z_2=0\}$, 
$|\psi|\leq C|g|^{1+\gamma_p}$
implies that the germ $\psi$ is in the local ideal $(g_j)$
at $\{z_1=z_2=0\}$, 
in other words  $\gamma_p$ is the
\emph{Brian\c con-Skoda number} at 
$\{z_1=z_2=0\}$.  
Moreover, one can check that  the Brian\c con-Skoda number at $\{z_0=z_2=0\}$
is $\lceil \frac{(p-3)(p-1)}{p-2}\rceil$, where $\lceil a \rceil$
denotes the smallest integer $\geq a$, 
see, e.g., \cite{Sz}. Therefore, $\mu_0$ must be at least 
\begin{equation*}\label{blubb} 
a_p:= \max \bigg (\frac{p-1}{2}, \Big \lceil
\frac{(p-3)(p-1)}{p-2}\Big \rceil\bigg ).
\end{equation*} 
In fact, in view of \cite[Section~2]{Sz} and the proof below one
finds that Theorem ~B holds with $\mu_0=a_p$ in this case. 


From \cite[Proposition ~4.16]{Eis2} we know that 
$\nu_L \leq \reg X -2 = p-2.$   
Since $\mu_0\geq a_p$, it follows that if our given  $f$ and $\phi$ satisfy \eqref{bs2}, then $s\geq p-1\geq
\nu_L+1$, so the
hypothesis \eqref{nuv} in Theorem ~B is vacuous in this case. 





\end{ex}

\noindent 
In particular, Example~\ref{orange} shows that $\mu_0$ can be arbitrarily
large.


\smallskip

The starting point for the proofs of Theorems~A
and~B is  the framework for solving division problems using residue
theory introduced in \cite{A3},
and further developed in \cite{AG, EW1, EW2}:
Assume that $X$ is a smooth
projective variety  and that $f_1,\ldots,f_m$ are sections of an ample line bundle $L\to
X$ with common zero set $Z^f$. From the
Koszul complex generated by the $f_j$
one defines  a current $R^f$ with support on $Z^f$ and taking
values in a direct sum of negative powers of $L$.
If  $\phi$ is a section of $L^{\otimes s}$ such that the current $R^f\phi$ vanishes,  and if in addition
$L^{\otimes s}$ is positive enough, so that certain cohomology classes
on $X$ vanish, and thus a certain sequence of $\dbar$-equations
can be solved  on $X$, one ends up with a holomorphic solution $q=(q_1,\ldots,q_m)$
to \eqref{likhet2}.

The main novelty in this paper is an extension of  this framework to
singular  $X$, see Section ~\ref{divandres}. Given an embedding $i\colon X\to Y$ of $X$ into a
smooth manifold   $Y$ and a locally free resolution on $Y$ of
$\Ok^Y/\J_X$, where $\J_X$ is the ideal sheaf associated with $X$, we construct an intrinsic  principal
value current $\omega$ on $X$, following \cite{AS}, 
and a ``product current''
$R^f\w\omega$.   If  $L$, $f_j$, and $\phi$  admit extensions to $Y$, which is
the situation  in Theorem~A with $Y=\P^N$,
we can proceed basically as before: If $R^f\w \omega \phi=0$, which is indeed an intrinsic condition on $X$,   
and certain cohomology classes  on $Y$ vanish, 
we end up with a holomorphic solution to
\eqref{likhet2};
this is how we prove Theorem~A.
By a small variation one can make this procedure more intrinsic and
assume vanishing of cohomology classes on $X$ rather than on $Y$. 

For the proof of Theorem ~B we cannot use this strategy directly,
since  we have no a~priori extensions of $L$, $f_j$, and $\phi$ to a smooth
manifold $Y$. However, for a
fixed $L$ it is possible to find an embedding $i\colon X\to Y$ such
that $L$ extends. 
Given such an embedding, without assuming  
holomorphic extensions of $f_j$ and $\phi$,  we construct a variant
$\widetilde R\wedge \omega$  on $X$ of $R^f\w\omega$, again with the property that if $\widetilde R\w \omega \phi=0$ and
the crucial cohomology classes vanish on $X$
we get a holomorphic solution to
\eqref{likhet2}.


To verify that the currents $R^f\wedge \omega\phi$
and $\widetilde R\w \omega\phi$ vanish 
we need to analyse the
singularities of $\omega$.
For Theorem~A it is enough to consider  a fixed $\omega$, coming from
the embedding of $X$ in $\P^N$ and a choice of resolution of
$\Ok^Y/{\J_X}$, whereas for Theorem~B we need to control the singularities
for $\omega$ coming
from any possible embedding $i:X\to Y$. To this end we need a new 
uniform estimate, Proposition~\ref{invarians}.


In Section~\ref{svart} we provide necessary
background on residue currents.
The proofs of our main theorems together with some  further results and comments
are gathered in Sections~\ref{dist} to~\ref{ormbo}.


\smallskip

\noindent
\textbf{Acknowledgement}:  
We would like to thank the referee for careful reading and many
valuable comments and suggestions that have substantially improved the exposition
of the paper.

\section{Some preliminaries on residue theory}\label{svart}

Most of the results in this section can be found in the papers 
\cite{A2, A12, AS, ASS, AW1, AW2}. More precisely, the
material in Section ~\ref{pm} is taken from 
\cite[Sections~2-3]{AW2}. Sections \ref{olga}-\ref{bef} are mostly based on
the first three sections in \cite{AW1} and Section ~\ref{lotta} is
based on Section ~6 in loc.\ cit. For Section ~\ref{tenprod}, see
\cite[Section~4]{A12}, and for Sections ~\ref{fundform} and
~\ref{fineres}, see Sections ~1 and ~3, respectively, in
\cite{AS}.
Finally Section \ref{boork} is based on   
\cite[Section~4]{ASS}. Proposition ~\ref{invarians} is new; the proof is given in Section
~\ref{pukas}.

Throughout this paper $X$ is a reduced projective variety of pure dimension ~$n$.
The sheaf
$\Cu_{\ell,k}$ of  currents of bidegree $(\ell,k)$ on $X$ is by definition the dual
of the sheaf  $\E_{n-\ell,n-k}$ of smooth  $(n-\ell,n-k)$-forms on $X$.
If $i\colon X\to Y$ is an embedding in a smooth manifold $Y$ of dimension $N$, then
$\E_{n-\ell,n-k}$ can be identified with the quotient sheaf
$\E_{n-\ell,n-k}^Y/\Ker i^*$, where $\Ker i^*$ is the sheaf of forms $\xi$ on $Y$ such that
$i^*\xi$ vanish on  $X_{\text{reg}}$. It follows that the currents $\tau$ in
$\Cu_{\ell,k}$ can be identified with
currents $\tau'=i_*\tau$ on $Y$ of bidegree $(N-n+\ell,N-n+k)$  that
vanish on $\Ker i^*$.

Given a holomorphic function $f$ on $X$, we have the principal value current
$[1/f]$, defined for instance as the limit
$$
\lim_{\epsilon\to 0}\chi(|f|^2/\epsilon)\frac{1}{f},
$$
where $\chi(t)$ is the characteristic function of the
interval $[1,\infty)$ or a smooth approximand of it.
The existence of this limit for a general $f$ relies on
Hironaka's theorem that ensures that there is a modification $\pi\colon\widetilde X\to X$
such that $\pi^*f$ is locally a monomial. It also follows that the
function $\lambda\to|f|^{2\lambda}(1/f)$, a~priori defined for
$\Re\lambda\gg 0$,  has
a current-valued analytic continuation to $\Re\lambda>-\epsilon$, and that
the value at $\lambda=0$ is precisely the current $[1/f]$, see, for instance,
\cite{Bj} or \cite{BjSam}.   
Although less natural at first sight, it turns out that this latter definition
via analytic continuation is often much more convenient. The same idea
will be used throughout this paper.
For the rest of this paper we skip the brackets and just write $1/f$.
It is readily checked that
\begin{equation}\label{kalkyl}
f\frac{1}{f}=1,\quad\quad f\dbar\frac{1}{f}=0.
\end{equation}

\subsection{Pseudomeromorphic currents}\label{pm}
In \cite{AW2} we introduced the sheaf $\PM$ of {\it pseudomeromorphic currents}
on $X$ in the case $X$ is smooth. The definition when $X$ is singular is identical.
In this paper we will use the slightly extended definition introduced in
\cite{AS}:
We say that a current of the form 
$$
\frac{\xi}{s_1^{\alpha_1}\cdots s_{n-1}^{\alpha_{n-1}}}\w\dbar\frac{1}{s_n^{\alpha_n}},
$$
where $s$ is a local coordinate system and $\xi$ is a smooth form with compact support,
is an {\it elementary pseudomeromorphic current}.
The sheaf $\PM$ consists of all possible (locally finite sums  of)
push-forwards under a sequence of maps
$ X^m\to\cdots \to X^1\to X$,
of elementary pseudomeromorphic currents, where $X^m$ is smooth, and each
mapping is either a modification, a simple projection
$\widehat X\times Y\to \widehat X$, or an open inclusion, i.e., $X^j$ is an
open subset of $X^{j-1}$. 

The sheaf $\PM$ is closed under $\dbar$ (and $\partial$) and multiplication by smooth forms.
If $\tau$ is in $\PM$ and has support on a subvariety $V$ and $\eta$ is
a holomorphic form that vanishes on $V$, then $\overline{\eta}\w\tau=0$.
We also have the

\noindent {\it Dimension principle:  If $\tau$ is a pseudomeromorphic current
on $X$ of bidegree $(*,p)$ that has support on a variety $V$ of codimension
$>p$, then $\tau=0$.}

\smallskip
If $\tau$ is in $\PM$ and $V$ is a subvariety of $X$, then the natural restriction
of $\tau$ to the open set $X\setminus V$ has a canonical extension
as a principal value to a pseudomeromorphic current  ${\bf 1}_{X\setminus V}\tau$ on $X$:
Let  $h$ be  a holomorphic tuple with common zero set
$V$. The current-valued function $\lambda\mapsto  |h|^{2\lambda}\tau$, a~priori defined
for $\Re\lambda\gg 0$,  has an analytic continuation to $\Re\lambda>-\epsilon$ and
its value at $\lambda=0$ is by definition  ${\bf 1}_{X\setminus V}\tau$.
One can also take a smooth approximand $\chi$ of the characteristic function
of  the interval $[1,\infty)$ and obtain ${\bf 1}_{X\setminus V}\tau$ as the limit
of $\chi(|h|^2/\epsilon)\tau$ when $\epsilon\to 0$.
It follows that  ${\bf 1}_{V}\tau:=\tau-{\bf 1}_{X\setminus V}\tau$ is
pseudomeromorphic and has  support on $V$. 
Notice that if $\alpha$ is a smooth form, then
${\bf 1}_{V}\alpha\w\tau=\alpha\w{\bf 1}_{V}\tau.$
Moreover, 
If $\pi\colon \widetilde X\to X$ is a modification,  $\tilde\tau$ is in $\PM(\widetilde X)$,
and
$\tau=\pi_*\tilde\tau$, then
\begin{equation}\label{enkelstjarna}
{\bf 1}_V\tau=\pi_*\big({\bf 1}_{\pi^{-1}V}\tilde\tau\big)
\end{equation}
for any subvariety $V\subset X$.
There is actually a reasonable
definition of ${\bf 1}_W\tau$ for any constructible set $W$,  and
\begin{equation}\label{dubbelstjarna}
{\bf 1}_W{\bf 1}_{W'}\tau={\bf 1}_{W\cap W'}\tau.
\end{equation}

Recall that a current is {\it semi-meromorphic}  if it is the quotient of a smooth
$L$-valued form and a holomorphic section of $L$, for some line bundle
$L$.
We say that a current $\tau$ is {\it almost semi-meromorphic}
in $X$ if there is a modification
$\pi\colon \widetilde X\to X$ and a  semi-meromorphic current
$\tilde \tau$  such that  $\tau=\pi_*\tilde\tau$, see
\cite[Section~2]{AS}. 
Analogously we say that $\tau$ is {\it almost smooth} if
$\tau=\pi_*\tilde\tau$ and $\tilde\tau$ is smooth.
Any almost semi-meromorphic (or smooth)
$\tau$ is pseudomeromorphic.

\subsection{Residues associated with Hermitian complexes}\label{olga}
Assume that
\begin{equation}\label{ecomplex}
0\to E_M\stackrel{f^M}{\longrightarrow}\ldots\stackrel{f^3}{\longrightarrow}
E_2\stackrel{f^2}{\longrightarrow}
E_1\stackrel{f^1}{\longrightarrow}E_0\to 0
\end{equation}
is a generically exact complex of Hermitian vector bundles over  $X$ and
let $Z$ be the subvariety  where \eqref{ecomplex}  is not pointwise exact.
The bundle $E=\oplus E_k$ gets  a natural superbundle structure, i.e., a $\Z_2$-grading,
$E=E^+\oplus E^-$, $E^+$ and $E^-$ being the subspaces of even and odd elements,
respectively,  by letting
$E^+=\oplus _{2k}E_k$ and $E^-=\oplus_{2k+1}E_k$. 
This extends to a $\Z_2$-grading of the sheaf $\Cu_\bullet(E)$ of $E$-valued currents,
so that  the degree of $\xi\otimes e$  is the sum of the current degree of $\xi$
and the degree of $e$, modulo $2$.
An endomorphism on $\Cu_\bullet(E)$ is even if it preserves degree and odd if it switches degrees.
The mappings $f:=\sum f^j$  and $\dbar$ are then odd mappings on 
$\Cu_\bullet(E)$. We introduce $\nabla=\nabla_f=f-\dbar$; it is
just (minus) the $(0,1)$-part of Quillen's superconnection $D-\dbar$.
Since the odd mappings $f$ and $\dbar$ anti-commute,  $\nabla^2=0$.
Moreover, $\nabla$  extends to an odd mapping  $\nabla_\End$ on $\Cu_\bullet(\End E)$ 
so that 
\begin{equation}\label{kalkyl2}
\nabla(\alpha\xi)=\nabla_\End\alpha\cdot\xi+(-1)^{\deg\alpha}\alpha\nabla\xi
\end{equation}
for sections $\xi$ and $\alpha$ of $E$ and $\End E$, respectively, 
and then  $\nabla_{\End}^2=0$.
In  $X\setminus Z$ we define, following \cite[Section~2]{AW1},
a smooth $\End E$-valued form $u$ such that
\begin{equation*}%
\nabla_{\End} u=I,
\end{equation*}
where $I=I_E$ is the identity endomorphism on  $E$. 
We have that 
$$
u=\sum_\ell u^\ell=\sum_{\ell}\sum_{k\ge \ell+1}u^\ell_k,
$$
where 
$u^{\ell}_{k}$ is in $\E_{0,k-\ell-1}(\Hom(E_\ell,E_{k}))$ over $X\setminus Z$.
Following \cite{AW1}\footnote{The definition is the  same  when $X$ is singular.}
we define  a pseudomeromorphic 
current extension $U$  of $u$ across  $Z$,
as the value at $\lambda=0$ of the current-valued analytic function
$$
\lambda\mapsto U^\lambda:=|F|^{2\lambda} u,
$$
a~priori defined for $\Re\lambda\gg 0$, 
where $F$ is the tuple $f^1$.  
In the same way we define the {\it residue current} $R$   associated with
\eqref{ecomplex} as the value at $\lambda=0$ of
$$
\lambda\mapsto R^\lambda:= (1-|F|^{2\lambda})I +\dbar|F|^{2\lambda}\w u.
$$
The existence of the analytic continuations follows
from a suitable resolution $\widetilde X\to X$, see \cite{AW1}, see
also Section~\ref{dist} below. 
The current $R$ clearly has support on  $Z$, and 
$$
R=\sum_\ell R^\ell=\sum_\ell\sum_{k\ge\ell+1} R^\ell_k,
$$
where   $R^\ell_k$ is a $\Hom(E_\ell,E_k)$-valued $(0,k-\ell)$-current. 
The currents $U^\ell$ and $U^\ell_k$ are defined analogously. 
Notice that $U$ has odd degree and that $R$ has even degree.
By the dimension principle, 
$R^\ell_{k}$ vanishes if $k-\ell<\codim Z$.
In particular, $R^0_0=(1-|F|^{2\lambda})I_{E_0}|_{\lambda=0}$ is  zero, unless some
components $W$ of
$Z$ has codimension $0$,  in which  case $R^0_0$ is the characteristic
function for $W$ times the identity $I_{E_0}$ on $E_0$.
However, when we define products of currents later on, all components
of $R^\lambda$ may  play a role.

Since $\nabla_\End U^\lambda=I-R^\lambda$ and $\nabla_\End R^\lambda=0$
when $\Re\lambda\gg 0$, we conclude that
\begin{equation}\label{urformel}
\nabla_{\End}U= I-R, \qquad  \nabla_{\End} R=0. 
\end{equation}
In particular, if $\xi$ is a section of $E$, then 
\[
\nabla(U\xi)=\xi-R\w\xi. 
\]
Also, \eqref{urformel} means that, cf. \eqref{kalkyl2},  
\begin{equation*}%
f^1 U_1^0=I_{E_0}, \quad  f^{k+1}U_{k+1}^0-\dbar U_k^0=R_k^0; \quad  k\ge 1.
\end{equation*}
Notice that when $\phi$ is a section of $E_0$, then
$R^0\phi=R\phi$ and $U^0\phi=U\phi$, and we will often
skip the upper indices. 

\begin{ex}[The Koszul complex]\label{koszulex}
Let $f_1,\ldots, f_m$ be holomorphic sections of a Hermitian line bundle $L\to X$.
Let $E^j$ be disjoint trivial line bundles with basis elements $e_j$ and
define the rank $m$ bundle
$$
E=(L^{-1}\otimes E^1) \oplus\cdots \oplus (L^{-1}\otimes E^m)
$$
over $X$.
Then $f:=\sum f_j e_j^*$, where $e_j^*$ is the dual basis, is a
section of the dual bundle $E^*= L\otimes (E^1)^* \oplus\cdots \oplus L\otimes (E^m)^*$.
If $S\to X$ is a Hermitian line bundle we can form  a complex \eqref{ecomplex} with
$$
E_0=S, \quad     E_k=S\otimes \Lambda^k E,
$$
where  all the mappings $f^k$ in \eqref{ecomplex}
 are  interior multiplication $\delta_f$ by the section $f$.
Notice that
\begin{equation*}%
E_k=S\otimes L^{-k}\otimes \Lambda^k(E^1\oplus\cdots \oplus E^m).
\end{equation*}
The superstructure of $\oplus_k E_k$ in this case coincides with the natural grading of
the exterior algebra $\Lambda E$ of $E$ modulo $2$.

Let us  recall how  the currents $U^0$ and $R^0$ are defined in this case.
For simplicity we suppress the upper indices throughout this example.
We have the natural norm
$$
|f|^2=\sum_j |f_j|^2_L
$$
on $E^*$. Let $\sigma$ be  the section of $E$ over $X\setminus Z$
of pointwise minimal norm such that $f\cdot\sigma=\delta_f\sigma=1$, i.e.,
\begin{equation}\label{farfar}
\sigma = \sum_j \frac {f_j^* e_j}{|f|^2},
\end{equation} 
where $f_j^*$ are  the sections of $L^{-1}$ of minimal norm such that $f_jf_j^*=|f_j|^2_L$.

Let us consider the exterior algebra over $E\oplus T^*(X)$ so that
$d \bar z_j\w e_\ell =-e_\ell\w d\bar z_j$ etc. 
Then,  e.g., $\dbar\sigma$ is a form of positive
degree.
We have  the smooth form 
\begin{equation}\label{udefined}
u=\sum u_k, \quad  u_k=\sigma\w(\dbar\sigma)^{k-1} 
\end{equation}  
in $X\setminus Z$, and it admits
a natural current extension $U$ across $Z$, e.g., defined as the analytic continuation
of
$
U^{\lambda}=|f|^{2\lambda}u
$
to  $\lambda=0$.
Furthermore, the associated residue current
$R$ is obtained as the evaluation at $\lambda=0$ of
\begin{multline*}
R^{\lambda}:=1-|f|^{2\lambda}+
\dbar|f|^{2\lambda}\w u =\\
1-|f|^{2\lambda}+\dbar|f|^{2\lambda}\w u_1+\cdots + \dbar|f|^{2\lambda}\w u_{\min(m,n)}
=:R^{\lambda}_0+R^{\lambda}_1+\cdots +R^{\lambda}_{\min(m,n)}.
\end{multline*}
The current $R$ 
was introduced in \cite{A2} in this form, much  inspired by \cite{PTY} where
the coefficients appeared.
\end{ex}

\subsection{The associated sheaf complex}\label{sheafcomplex}

Given the complex \eqref{ecomplex} we
have the  associated  complex of locally free sheaves
\begin{equation}\label{sheaves}
0\to \mathcal{O}(E_M)\stackrel{f^M}{\longrightarrow}\ldots\stackrel{f^3}{\longrightarrow}
\mathcal{O}(E_2)\stackrel{f^2}{\longrightarrow}
\mathcal{O}(E_1)\stackrel{f^1}{\longrightarrow}\mathcal{O}(E_0).
\end{equation}
In this paper $E_0$ is always a line bundle so that $\J:=\Im f^1$ is a
coherent ideal sheaf over  $X$.

Consider the double sheaf complex $\M_{\ell, k}:=\Cu_{0,k}(E_\ell)$ with mappings
$f$ and $\dbar$. We have the associated total complex
$$
\ldots \stackrel{\nabla_f}{\longrightarrow} \M_j \stackrel{\nabla_f}{\longrightarrow}
\M_{j-1}\stackrel{\nabla_f}{\longrightarrow} \ldots\quad ,
$$
where 
$
\M_j=\oplus_{\ell-k=j}\M_{\ell,k}.
$ 
If $X$ is smooth, then  $\M_{\ell,k}$  is exact in the $k$-direction except at $k=0$,
and the kernels there are $\Ok(E_\ell)$.  Notice that if
$\phi$ is in $\Ok(E_\ell)$ and $f^\ell\phi=0$, then also $\nabla_f\phi=0$. We therefore
have a natural mapping
\begin{equation}\label{bus1}
 H^j(\Ok(E_\bullet))\to   H^j(\M_\bullet).
\end{equation}
By   standard homological algebra, \eqref{bus1} is in fact
an isomorphism. We can also consider the corresponding sheaf complexes
$\M^\E_{\ell,k}:= \E_{0,k}(E_\ell)$,
$\M^\E_j=\oplus_{\ell-k=j}\M^\E_{\ell,k}$ of smooth sections, and  the  analogue of
\eqref{bus1} is then an  isomorphism as well.

\begin{lma}\label{flundra}
Assume that $X$ is smooth. If $\phi$ is a holomorphic section of $E_0$ that annihilates $R$, i.e.,
$R\phi=0$, then  $\phi$ is in $\J$.
\end{lma}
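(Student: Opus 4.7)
The plan is to exploit the fundamental identity $\nabla_\End U = I - R$ together with the quasi-isomorphism \eqref{bus1} between the sheaf complexes $\Ok(E_\bullet)$ and $\M_\bullet$. Since $\phi\in\J$ is a stalk-wise statement, the goal is to show, locally at each point, that the class of $\phi$ in $H^0(\Ok(E_\bullet))=\Ok(E_0)/\J$ vanishes.

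The first step is a direct computation. Applying \eqref{kalkyl2} with $\alpha=U$ (odd) and the holomorphic section $\xi=\phi$ of $E_0$ (so that $\nabla\phi = f\phi - \dbar\phi = 0$, since $E_0$ is at the bottom of the complex and $\phi$ is holomorphic), one gets
\[
\nabla(U\phi) = (\nabla_\End U)\phi - U\nabla\phi = (I-R)\phi = \phi - R\phi = \phi,
\]
where the final equality uses the hypothesis $R\phi = 0$.

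The second step is bookkeeping: one confirms that this is an honest equation of elements of the total complex in adjacent degrees. Writing $U\phi = \sum_{k\ge 1} U^0_k\phi$, each summand lies in $\Cu_{0,k-1}(E_k) = \M_{k,k-1}$, which sits in total degree $\ell - k = 1$, so $U\phi \in \M_1$. Meanwhile $\phi \in \M_{0,0} \subset \M_0$. Thus $\phi = \nabla(U\phi)$ exhibits $\phi$ as a $\nabla$-coboundary in $\M_\bullet$, so its class in $H^0(\M_\bullet)$ vanishes; by \eqref{bus1}, so does its class in $H^0(\Ok(E_\bullet)) = \Ok(E_0)/\J$, which is exactly the assertion $\phi \in \J$.

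The only real subtlety is the isomorphism \eqref{bus1} itself, which rests on the fact that for smooth $X$ the columns $\M_{\ell,\bullet}$ form a Dolbeault-type fine resolution of $\Ok(E_\ell)$, so that a standard double-complex comparison identifies the two cohomologies. This is precisely where the smoothness of $X$ is used; the failure of this argument for singular $X$ is what motivates the ambient-embedding and structure-form technology developed in the remainder of the paper.
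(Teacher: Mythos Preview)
Your proof is correct and follows essentially the same approach as the paper: compute $\nabla(U\phi)=\phi-R\phi=\phi$ from the identity $\nabla_{\End}U=I-R$, then invoke the isomorphism \eqref{bus1} (which uses smoothness of $X$) to conclude that $\phi$ is locally in $\J$. Your version simply supplies more detail on the sign in \eqref{kalkyl2}, on why $\nabla\phi=0$, and on the degree bookkeeping, which the paper leaves implicit.
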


\begin{proof}
In fact,  by \eqref{urformel} we have that
\begin{equation*}%
\nabla_f (U\phi)=\phi- R\phi=\phi.
\end{equation*}
Since $X$  is smooth, \eqref{bus1} is an isomorphism, and thus 
locally $\phi =f^1 \psi$ for some holomorphic $\psi$,
i.e., $\phi$ is in $\J$.
\end{proof}

The smoothness assumption is crucial, as the following example shows.

\begin{ex} Let  $f$ be one single function.
Then the residue condition $R\phi=0$  means that $\dbar (\phi/f)=0$.
Thus $\psi=\phi/f$ is   in the Barlet-Henkin-Passare
class, cf., \cite{HP} and \cite{AS}; however in general $\psi$ is not
(strongly) holomorphic, i.e., in general  $\phi$ is not in $\J=(f)$.
\end{ex}

We shall now see that if $X$ is smooth and  there is a global current solution to $\nabla W=\phi$, then
there is also a global smooth solution.
For further reference however we need  a slightly more general statement about  the associated complex  of
global sections.
Let  $\M_{\ell,k}(X)$ and $\M^\E_{\ell,k}(X)$ be the double  complexes 
of
global current valued  and smooth sections,  respectively, and let $\M_\bullet(X)$
and $\M^\E_\bullet(X)$ be the associated total complexes.
Notice that we have  natural mappings
\begin{equation}\label{puma1}
H^j(\M^\E_\bullet(X))\to H^j(\M_\bullet(X)),  \quad j\in \Z.
\end{equation}
The following result is standard, but we include a proof for the
reader's convenience.

\begin{prop}\label{urlaka}
If  $X$ is smooth, then the mappings  \eqref{puma1}
are  isomorphisms.
\end{prop}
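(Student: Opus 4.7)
The plan is to compare the two double complexes by a standard spectral-sequence argument. The inclusion $\M^\E_{\ell,k}(X)\hookrightarrow \M_{\ell,k}(X)$ of smooth forms into currents commutes with both $f$ and $\dbar$, so it induces a morphism of total complexes $\M^\E_\bullet(X)\to \M_\bullet(X)$ whose effect on cohomology is precisely the map \eqref{puma1}.

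First I would filter each total complex by the $E_\ell$-degree, i.e.\ $F^p\M_j(X) = \bigoplus_{\ell\ge p,\, \ell-k=j}\M_{\ell,k}(X)$, and likewise for $\M^\E$. Since the complex \eqref{ecomplex} has finite length $M$, these filtrations are bounded, so the associated spectral sequences converge strongly to the cohomology of the respective total complexes. The $E_0$-differential is $\dbar$, so the $E_1$-page is the column-wise $\dbar$-cohomology $H^k(\M^\E_{\ell,\bullet}(X))$, respectively $H^k(\M_{\ell,\bullet}(X))$.

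Next I would identify both $E_1$-pages with $H^k(X,\Ok(E_\ell))$. For smooth forms this is the classical Dolbeault--Serre theorem. For currents, the Dolbeault lemma for currents on a smooth manifold yields the fine-sheaf resolution
\[
0\to\Ok(E_\ell)\to \Cu_{0,0}(E_\ell)\stackrel{\dbar}{\longrightarrow}\Cu_{0,1}(E_\ell)\stackrel{\dbar}{\longrightarrow}\cdots,
\]
so $H^k(\M_{\ell,\bullet}(X))$ also computes $H^k(X,\Ok(E_\ell))$, and the map induced by the inclusion is the identity on this sheaf cohomology. The $f$-differential at $E_1$ coincides in the two cases with the map on sheaf cohomology induced by $f^\ell\colon\Ok(E_\ell)\to \Ok(E_{\ell-1})$, so the two spectral sequences become isomorphic from $E_1$ onward; by boundedness of the filtrations they have isomorphic abutments, which yields \eqref{puma1}.

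The only non-formal ingredient is the Dolbeault-type fine resolution of $\Ok(E_\ell)$ by $(0,\bullet)$-currents on the smooth $X$; once that (classical) input is in hand the rest is pure homological algebra, so I expect that to be where the (minor) work lies rather than in the spectral-sequence bookkeeping.
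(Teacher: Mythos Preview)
Your argument is correct. The paper reaches the same conclusion by a slightly different packaging of the same homological algebra: instead of comparing the two spectral sequences directly, it passes to the quotient complex $\M_\bullet(X)/\M^\E_\bullet(X)$. Using the same Dolbeault-for-currents input (that \eqref{puma2} are isomorphisms), the paper deduces via the long exact sequence in each column that the column-wise $\dbar$-cohomology of the quotient double complex vanishes; then a ``simple homological algebra argument'' (essentially your spectral-sequence argument, or an equivalent diagram chase, applied to a bounded double complex with acyclic columns) gives $H^k(\M_\bullet(X)/\M^\E_\bullet(X))=0$, and the long exact sequence of the short exact sequence of total complexes finishes the proof. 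Your direct spectral-sequence comparison is a bit cleaner and avoids the detour through the quotient; the paper's route is marginally more elementary in that it does not invoke spectral sequences explicitly, but the substance is identical.
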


\begin{proof}
By the de~Rham theorem, the natural mappings
\begin{equation}\label{puma2}
H^k(\E_{0,\bullet}(X,E_\ell))\to H^k(\Cu_{0,\bullet}(X,E_\ell)),  \quad k\in \Z,
\end{equation}
are isomorphisms; these spaces are in fact naturally isomorphic to
the cohomology groups $H^k(X,\Ok(E_\ell))$.
The short exact sequence
$$
0\to\M_\bullet^\E(X)\to \M_\bullet(X)\to\M_\bullet(X)/\M_\bullet^\E(X)\to 0
$$
gives rise to, for each fixed $\ell$,   the long exact sequence
\begin{multline*}
\ldots \to H^{k-1}(\E_{0,\bullet}(X,E_\ell))\to   H^{k-1}(\Cu_{0,\bullet}(X,E_\ell))\to \\
 H^{k-1}(\Cu_{0,\bullet}(X,E_\ell)/\E_{0,\bullet}(X,E_\ell))\to H^{k}(\E_{0,\bullet}(X,E_\ell))\to \ldots \quad,
\end{multline*}
and since \eqref{puma2} are isomorphisms    
the cohomology in the $k$-direction of $\M_{\ell,k}(X)/\M^\E_{\ell,k}(X)$
is zero.  By a simple homological algebra argument,
using that the double complexes involved are bounded, 
it follows that 
$$
H^{j}(\M_\bullet(X)/\M^\E_\bullet(X))=0
$$ 
for each $j$.
The proposition now follows from the long exact sequence
\begin{multline*}
\ldots \to H^{j-1}(\M^\E_\bullet(X))\to  H^{j-1}(\M_\bullet(X))\to \\
 H^{j-1}(\M_\bullet(X)/\M^\E_\bullet(X)) \to H^{j}(\M^\E_\bullet(X))\to \ldots \quad.
\end{multline*}
\end{proof}

\subsection{BEF-varieties and duality principle}\label{bef}

We now consider the case when the locally free complex \eqref{sheaves}
is exact, i.e., a resolution of the sheaf  $\Ok(E_0)/\J$. 
We will refer to a (locally free) resolution $\Ok(E_0)/\J$ together with a 
choice of Hermitian metrics on the corresponding vector bundles $E_k$ as a \emph{ Hermitian 
(locally free) resolution}. 
Let $Z_k^\BEF$ be the set where the mapping $f^k$ does not
have optimal rank. 
Then
\[
\cdots Z^\BEF_{k+1}\subset Z^\BEF_k\subset \cdots\subset Z^\BEF_1=Z,  
\] 
and these sets 
 are independent of the choice of resolution;  
we call them the   {\it BEF varieties}\footnote{The sets  $Z_k^\BEF$ are  the zero varieties
of  certain Fitting ideals associated with a free resolution of $\Ok^X/\J$;
the importance of these sets (ideals) was pointed out by Buchsbaum and Eisenbud
in the 70's. We have not seen any notion for these sets in the literature,
and  ``Buchsbaum-Eisenbud varieties'' is already occupied for another
purpose,  so we stick to BEF as an acronym for
Buchsbaum-Eisenbud-Fitting.}.
It follows from the Buchsbaum-Eisenbud theorem   that
$\codim Z_k^\BEF\ge k$.
If moreover $\J$ has pure dimension, for instance
$\J$ is the radical ideal sheaf of a pure-dimensional subvariety, then
$\codim Z_k^\BEF\ge k+1$ for $k \ge 1+\codim\J$, see \cite[Corollary~20.14]{Eis}.

Since \eqref{sheaves} is exact, by  \cite[Theorem~3.1]{AW1}, we have that
$R^\ell=0$ for each $\ell\ge 1$, i.e.,  $R=R^0$. 
Moreover, there are almost semi-meromorphic
$\Hom(E_k, E_{k+1})$-valued $(0,1)$-forms $\alpha_{k+1}$,
that are smooth outside $Z^\BEF_{k+1}$, such that
$$
R_{k+1}=\alpha_{k+1} R_k
$$
there, see \cite[Section~3]{AW1}. From  \cite[Theorem~1.1]{AW1} we also have the

\smallskip

\noindent{\it Duality principle:  If $X$ is smooth and
\eqref{sheaves}   is  a resolution of the sheaf
$\Ok(E_0)/\J$, then $\phi\in\J$ if and only if
$R\phi=0$.}

\smallskip
\noindent That is, the annihilator ideal sheaf of the residue current $R$ is precisely
the ideal sheaf $\J$ generated by $f^1$.

If for instance $f^1=(f_{1},\ldots,f_{m})$ defines a complete
intersection, i.e, $\codim Z= m$, 
then the Koszul complex is a resolution of
$\J$ and hence the duality principle states that the annihilator
of the residue current  in Example~\ref{koszulex} is the ideal
itself. 

\subsection{Tensor products of complexes}\label{tenprod}

Assume  that $(E^g_\bullet, g)$ and $(E^h_\bullet, h)$ are Hermitian complexes.
We can then  define a   complex  $(E^f_\bullet= E^g_\bullet\otimes E^h_\bullet, f)$, where
\begin{equation*}%
E_k^f=\bigoplus_{i+j=k} E^g_i\otimes E^h_j,
\end{equation*}
and $f=g+h$, or more formally
$f=g\otimes I_{E^h}+ I_{E^g}\otimes h$,
such that
\begin{equation}\label{krux}
f(\xi\otimes\eta)=g\xi\otimes\eta +(-1)^{\deg\xi}\xi\otimes h\eta.
\end{equation}
Notice that $E^f_0$ is the line bundle $E_0^g\otimes E_0^h$. If $g^1\Ok(E_1^g)=\J_g$ and $h^1\Ok(E^h_1)=\J_h$,  then
$f^1\Ok(E^f_1)=\J_g+\J_h$.
One extends \eqref{krux} to current-valued sections $\xi$ and $\eta$,  and
$\deg\xi$ then means  total degree.
We write $\xi\cdot\eta$, or sometimes $\xi\w\eta$ to emphasize
that the sections may be form- or current-valued, rather than $\xi\otimes\eta$, and
define
\begin{equation}\label{opera}
 \eta\cdot\xi=(-1)^{\deg \xi \deg\eta}\xi\cdot\eta.
\end{equation}
Notice that
\begin{equation*}%
\nabla_f (\xi\cdot\eta)=\nabla_g\xi\cdot\eta
+(-1)^{deg \xi}\xi\cdot \nabla_h\eta.
\end{equation*}
Let  $u^g$ and $u^h$ be the corresponding
$\End(E^g)$-valued and $\End(E^h)$-valued
forms, cf., Section~\ref{olga}.
Then $u^h\w u^g$ 
is a $\End(E^f)$-valued form
defined outside $Z^g\cup Z^h$.
Following the proof of  Proposition~2.1 in \cite{AW2} we can define
$\End(E^f)$-valued pseudomeromorphic currents
$$
U^h\w R^g:=U^{h,\lambda}\w  R^g |_{\lambda=0}, \quad
R^h\w  R^g:=R^{h,\lambda}\w  R^g |_{\lambda=0}.
$$
We have that, cf., \eqref{urformel} and \cite[Section~4]{A12},
$$
\nabla_{\End,f}(U^h\w R^g +U^g)=I_{E^f}-R^h\w R^g.
$$
In general, the current  $R^h\w R^g$ will change if we
interchange the roles of $g$ and $h$.

In particular we can form the product  $E^h_\bullet\otimes E^h_\bullet$
of $E^h_\bullet$ by itself. In this case we consider \eqref{opera}
as an identification, so that, for instance,
$$
(E^h_\bullet\otimes E^h_\bullet)_1=E^h_1\dot{\otimes} E_0^h, \quad
(E^h_\bullet\otimes E^h_\bullet)_2=E^h_2\dot{\otimes} E_0^h +\Lambda^2 E^h_1,
$$
etc, where $\dot{\otimes}$  denotes symmetric tensor product.
In general, $\xi\cdot\xi=0$ if  $\xi$ has odd (total) degree.

We can just as well form a similar product of more than two complexes, and in particular,
we can form the  product $(E^h)^{\otimes k}=
E^h\otimes E^h\otimes \cdots \otimes E^h$
of a given complex by itself.

\subsection{The   structure form $\omega$ on a singular variety}\label{fundform}
Let  $i\colon X\to Y$ be an embedding of $X$ in a  smooth projective
manifold $Y$ of dimension $N$,
let $\J_X$ be the radical ideal sheaf associated with $X$ in $Y$,
and let $S\to Y$ be an ample Hermitian line bundle.
Moreover, let $E_{k}^j$ be disjoint trivial line bundles over $Y$
with basis elements $e_{k,j}$. There is a
(possibly infinite) resolution, see, e.g., \cite[Ch.1, Example~1.2.21]{Laz},
\begin{equation}\label{gcomplex}
\ldots \stackrel{g^3}{\longrightarrow}\Ok(E_2)\stackrel{g^2}{\longrightarrow}\Ok(E_1)
\stackrel{g^1}{\longrightarrow}\Ok(E_0)
\end{equation}
of $\Ok(E_0)/\J_X=\Ok^X$, where $E_k$ is of the form 
$$
E_k=\big(E^1_k\otimes S^{-d^1_k}\big)\oplus\cdots \oplus
\big(E^{r_k}_k\otimes S^{-d_k^{r_k}}\big),\quad E_0=E_0^0\simeq\C,
$$
$E_k^i$ are trivial line bundles, and
$$
g^k=\sum_{ij} g^k_{ij}e_{k-1,i}\otimes e_{k,j}^*
$$
are matrices of sections
$$
g^k_{ij}\in\Ok(Y,S^{d_k^j-d_{k-1}^i});
$$
here $e_{k,j}^*$ are the dual basis elements.
There are natural induced norms on  $E_k$. 
The associated residue current\footnote{The fact that \eqref{gcomplex}
may be  infinite  causes no problem, since,  for degree
reasons,  $U$ and $R$ only contain a finite number of terms.}
$R$ is annihilated by all
smooth forms $\xi$ such that $i^*\xi=0$.
Let $\varOmega$ be
a  global nonvanishing $(\dim Y,0)$-form with values
in $K_Y^{-1}$.
By  \cite[Proposition~16]{AS}    there is a (unique) almost
semi-meromorphic   current
$\omega$ on  $X$, smooth on $X_{\text{reg}}$,  such that
\begin{equation}\label{poseidon}
i_*\omega= R\w\varOmega.
\end{equation}
We say that $\omega$ is a {\it structure form} on $X$.

As an immediate consequence of the existence of $\omega$, the product $\alpha\w R$ is well-defined 
for all (sufficiently) smooth forms $\alpha$ on $X$. If $\alpha=i^* a$, we let $\alpha\w R:=a\w R$. 
This product only depends on $\alpha$, since if $i^*a=0$, then $a\w R\w \varOmega = i_*(i^* a\w \omega)=0$ 
and hence $a\w R=0$ since $\varOmega \neq 0$. 

\smallskip
Let $X_k$ be the BEF varieties of $\J_X$, and define
\begin{equation*}
X^0=X_{\text{sing}}, \quad X^\ell=X_{N-n+\ell},  \  \ell\ge 1.
\end{equation*}
Since $\J_X$ has pure dimension it follows that 
\begin{equation}\label{nian}
\codim X^k\ge k+1,
\end{equation}
and  in particular,
$X^n=\emptyset$. These sets $X^\ell$ are actually  independent of the choice of embedding
of $X$, cf., the comment  after Lemma~\ref{purr1}.

Let $g_\ell$ be the restriction to $X$ of $g^{N-n+\ell}$,
and let $\nabla^g=g-\dbar$ on $X$. Let $E^\ell=E_{N-n+\ell}|_X$.
Then
$
\omega=\omega_0+\omega_1+\cdots +\omega_n,
$
where $\omega_\ell$ is a $(n,\ell)$-form on $X$ taking values in $E^{\ell}$,
and
$
\nabla^g \omega =0
$
on $X$.
There are almost semi-meromorphic
$\Hom(E^\ell, E^{\ell+1})$-valued $(0,1)$-forms  $\alpha^{\ell+1}$
such that
\begin{equation}\label{snorlik}
\omega_{\ell+1}=\alpha^{\ell+1} \omega_\ell
\end{equation}
there.
In fact, $\alpha^{\ell}$ is the pullback to  $X$ of the form
$\alpha_{N-n+\ell}$ associated with a resolution of $\Ok^Y/\J_X$ in $Y$,
cf., Section~\ref{bef}.

Since $\omega$ is almost semi-meromorphic, it has the {\it the standard extension
property}, SEP on $X$, which means that ${\bf 1}_W \omega=0$ for all varieties $W\subset X$ of positive codimension.

The singularities of a structure form  $\omega$ only depend on  $X$,  in the following sense:

\begin{prop}\label{invarians}
Let $X$ be a projective  variety. There is a smooth modification
$\tau\colon\widetilde X\to X$ and a holomorphic section $\eta$
of a line bundle $S\to \widetilde X$ such that the following holds:
If  $X\to Y$ is an embedding of $X$ in a smooth manifold $Y$,
$\big (\Ok(E^g_\bullet),g\big )$ is a Hermitian locally free resolution  of $\Ok^Y/\J_X$, and $\omega$ is the
associated structure  form on $X$, then $\eta\tau^*\omega$ is smooth
on $\widetilde X$. We can choose $\eta$ to be nonvanishing in
$\widetilde X\setminus \tau^{-1} X_{\text{sing}}$. 
\end{prop}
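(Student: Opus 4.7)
The plan is to produce $\tau:\widetilde X\to X$ by resolving the singularities of $X$ strongly enough that one fixed structure form becomes smooth after multiplication by $\eta$, and then to show that the same $\tau,\eta$ work for every other admissible choice of data by comparing resolutions.

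First, I would pick any auxiliary embedding $i_0:X\to Y_0$ and any Hermitian locally free resolution of $\Ok^X$, and let $\omega^{(0)}$ be the associated structure form. By Proposition~16 of \cite{AS}, each component $\omega^{(0)}_\ell$ is almost semimeromorphic on $X$. Combining Hironaka's theorem applied to $X$ with the further blow-ups that resolve the almost semimeromorphic singularities of the finitely many $\omega^{(0)}_\ell$, one obtains a smooth proper modification $\tau:\widetilde X\to X$, an isomorphism over $X_{\text{reg}}$, together with a holomorphic section $\eta$ of a line bundle $S\to\widetilde X$ vanishing only on $\tau^{-1}(X_{\text{sing}})$, such that $\eta\,\tau^*\omega^{(0)}$ is smooth on $\widetilde X$.

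Now let $\omega$ be the structure form associated with another embedding $i:X\to Y$ and Hermitian resolution of $\Ok^X$. I would compare $\omega$ to $\omega^{(0)}$ in two stages. The first stage reduces to a common embedding: replace $i,i_0$ by the diagonal $i\times i_0:X\to Y\times Y_0$, and consider the associated tensor product of the two given resolutions, cf.\ Section~\ref{tenprod}; the structure form for $i\times i_0$ specializes to both $\omega$ and $\omega^{(0)}$ via wedging with Koszul-type smooth forms coming from the complementary factors $Y_0$ and $Y$, respectively. The second stage uses that on a common ambient manifold any two Hermitian free resolutions of $\J_X$ are linked by a quasi-isomorphism, unique up to homotopy, whose induced operation on the associated residue currents $R$, and hence on the $\omega$'s, amounts to multiplication by smooth endomorphism-valued forms; a change of Hermitian metric likewise acts by smooth factors. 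Combining the two stages, $\eta\,\tau^*\omega$ differs from $\eta\,\tau^*\omega^{(0)}$ by a smooth form and is therefore itself smooth on $\widetilde X$.

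The main obstacle is making the comparison step truly rigorous at the level of pseudomeromorphic currents. A priori the chain maps between residue currents of different Hermitian resolutions are only determined up to chain-homotopy, and one must verify that these homotopies, together with the metric-dependent pieces, act on $U$ and $R$ by smooth, or at worst almost smooth, forms pulled back from $X$. The cleanest way to organise this is via the recursion $\omega_{\ell+1}=\alpha^{\ell+1}\omega_\ell$ from \eqref{snorlik}: the $\alpha^{\ell+1}$ are pullbacks of objects attached to the intrinsic BEF stratification $\{X^\ell\}$, which is independent of the embedding, so everything reduces to controlling $\omega_0$ uniformly along $X_{\text{sing}}$, where $\omega_0$ agrees with $i^*\varOmega$ up to a nonvanishing smooth factor on $X_{\text{reg}}$.
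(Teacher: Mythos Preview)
Your outline follows the same two-step shape as the paper's proof (build $\tau,\eta$ from one fixed choice, then show it works for all others), but the comparison step has a real gap that the paper resolves with a specific algebraic lemma you are missing.

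The claim that the almost semimeromorphic forms $\alpha^{\ell+1}$ are ``attached to the intrinsic BEF stratification $\{X^\ell\}$'' is only half true. The zero \emph{sets} $X^\ell$ are intrinsic, but the singularities of $\alpha^{\ell+1}$ are governed by the pulled-back Fitting \emph{ideals} $\mathfrak a_\ell:=i^*\Fitt_0 g^{p+\ell}$, and these ideals genuinely depend on the embedding $i\colon X\to Y$. Concretely, $s_\ell\,\tau^*\alpha^{\ell}$ is smooth when $s_\ell$ generates $\tau^*\mathfrak a_\ell$, but passing to another embedding gives different ideals $\mathfrak a'_\ell$ with different generators $s'_\ell$. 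The paper proves (Lemma~\ref{purr1}) the inclusion $\mathfrak a_\ell\cdots\mathfrak a_{n-1}\subset\mathfrak a'_\ell$ by factoring both embeddings through a local minimal embedding and computing the Fitting ideals of the product with a Koszul complex for the transverse coordinates. This inclusion is exactly what lets a single $\eta=\eta_0\,(s'_1\cdots s'_{n-1})(s'_2\cdots s'_{n-1})\cdots s'_{n-1}$ built from one fixed embedding kill the poles of $\tau^*\omega$ for \emph{every} embedding: each $s_\ell$ divides $s'_\ell\cdots s'_{n-1}$. Your soft argument via quasi-isomorphism of resolutions and metric change does handle the case of two resolutions on the \emph{same} $Y$ (this is indeed in the paper, and requires the image-bundle extension statement (ii) of Lemma~\ref{purr21}), but it does not cross between embeddings.

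Two smaller issues: the tensor product of a resolution of $\Ok^{Y}/\J_X$ with one of $\Ok^{Y_0}/\J_X$ is not a resolution of $\Ok^{Y\times Y_0}/\J_{\Delta_X}$, so your first reduction stage needs a different construction (the paper instead uses the product of the minimal-embedding resolution with the Koszul complex of the transverse linear coordinates). And $\omega_0$ is not $i^*\varOmega$ up to a smooth nonvanishing factor on $X_{\rm reg}$; it takes values in $E^0=E_{N-n}|_X$ and has the form $\sigma_G h$ with $h$ possibly only weakly holomorphic, so controlling $\omega_0$ already requires its own $\eta_0$ (part (iii) of Lemma~\ref{purr21}).
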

After further resolving we may assume that $\eta$ is locally a monomial
in $\widetilde X$.

The proof is postponed to Section~\ref{pukas}.
Since $\omega$ is almost semi-meromorphic, the pullback $\tau^*\omega$ is well-defined;
this  follows from the proof below, cf., also
the remark after Definition~12 in \cite{AS}.

\subsection{Local division problems on a singular variety}\label{boork}
Still assume that we have the embedding $i\colon X\to Y$, where $Y$ is smooth,
and the complex $(E_\bullet^g, g)$  over $Y$ 
corresponding to a Hermitian locally free resolution of $\Ok^Y/\J_X$.
If   $(E^f_\bullet,f)$ is an arbitrary Hermitian complex over $Y$
we have  the complex $E^F=E^f\otimes E^g$ with mappings
$F=f+g$ as in Section~\ref{tenprod}.
Let $F^k=F|_{E_k}$. Since
$
R^f\w R^g=R^{f,\lambda}\w R^g|_{\lambda=0}$ and  $U^f\w R^g=U^{f,\lambda}\w R^g|_{\lambda=0}
$, 
cf., Section~\ref{fundform},
these currents only depend on the values of $f$ on $X$. From Section~\ref{tenprod}
we also have that
\begin{equation}\label{badanka}
\nabla_{\End, F} U=I-R^f\w R^g
\end{equation}
if 
$
U=U^f\w R^g+U^g.
$
If $\Phi$ is a (locally defined)  holomorphic function in $Y$ and
$R^f\w R^g\Phi=0$, then,  following the proof of Lemma~\ref{flundra}, 
there is a local holomorphic solution $v=v_f+v_g$ in $E_1^F=E^f_1\otimes E^g_0+E^f_0\otimes E^g_1$
to $g^1 v_f +f^1 v_g=F^1 v=\Phi$.
Notice that in fact $R^f\w R^g\Phi$ only depends on the class $\phi$ of $\Phi$
in $\Ok^Y/\J_X=\Ok^X$, so $R^f\w R^g\phi$ is well-defined for
$\phi$ in $\Ok^X$.
We can define the intrinsic  residue current
$$
R^f\w\omega := R^{f,\lambda}\w\omega|_{\lambda=0}
$$
on $X$. Since  $i_*R^{f,\lambda}\w\omega=R^{f,\lambda}\w R^g\w\varOmega$
when  $\Re\lambda\gg 0$, we conclude that
$$
i_*R^f\w\omega=R^f\w R^g\w\varOmega.
$$
Since $\varOmega$ is nonvanishing,
$R^f\w\omega \phi=0$ implies that $R^f\w R^g\phi=0$ and thus we have:

\begin{prop}
 Assume that $(E^f_\bullet, f)$ is a Hermitian complex on $X$.
If $\phi$ is a holomorphic section of $E_0^f$  on $X$ such that $R^f\w \omega\phi=0$,
then locally $\phi$ is in the image of $f^1$ on $X$.
\end{prop}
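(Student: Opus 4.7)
The plan is to lift the problem from the singular $X$ to the smooth ambient $Y$ by combining $E^f_\bullet, f$ with the Hermitian resolution $E^g_\bullet, g$ of $\Ok^Y/\J_X$ and then imitating the proof of Lemma~\ref{flundra} on $Y$.

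Since the statement is local, I would first locally extend the bundles $E_k^f$ and the morphism $f$, as well as the section $\phi$, to holomorphic data on a neighborhood in $Y$; call the extension of $\phi$ by $\Phi\in\Ok^Y(E_0^f)$. Form the tensor product complex $E^F_\bullet=E^f_\bullet\otimes E^g_\bullet$ with differential $F=f+g$ as in Section~\ref{tenprod}. As recalled right before \eqref{badanka}, the currents $R^f\w R^g$ and $U^f\w R^g$ depend only on $f|_X$, so the extension is harmless. Because $\varOmega$ is nonvanishing and
\begin{equation*}
i_*(R^f\w\omega)=R^f\w R^g\w\varOmega,
\end{equation*}
the hypothesis $R^f\w\omega\cdot\phi=0$ on $X$ forces $R^f\w R^g\cdot\phi=0$; and since this product only depends on the class of $\Phi$ in $\Ok^Y/\J_X=\Ok^X$, we in fact have $R^f\w R^g\cdot\Phi=0$ in $Y$.

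Applying \eqref{badanka} to $\Phi$ with $U=U^f\w R^g+U^g$ gives
\begin{equation*}
\nabla_F(U\Phi)=\Phi-R^f\w R^g\cdot\Phi=\Phi
\end{equation*}
as $E^F_0$-valued currents on $Y$. Since $Y$ is smooth, the argument of Lemma~\ref{flundra} applies verbatim to the complex $E^F_\bullet, F$: the isomorphism \eqref{bus1} for $E^F_\bullet$ turns the current equation $\nabla_F(U\Phi)=\Phi$ into a local holomorphic solution $v\in\Ok^Y(E^F_1)$ of $F^1v=\Phi$. Decompose $v=v_f+v_g$ with $v_f\in\Ok^Y(E^g_1\otimes E^f_0)$ and $v_g\in\Ok^Y(E^g_0\otimes E^f_1)=\Ok^Y(E^f_1)$; then $g^1v_f+f^1v_g=\Phi$ on $Y$. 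Since the image of $g^1$ is contained in $\J_X$, the term $g^1v_f$ vanishes upon restriction to $X$, and pulling back to $X$ yields $\phi=f^1(v_g|_X)$, which is precisely the desired conclusion.

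The main obstacle is conceptual rather than computational: one has to be sure that the intrinsic residue product $R^f\w\omega\cdot\phi$ on $X$ really does capture, via $\varOmega$, the ambient product $R^f\w R^g\cdot\Phi$ on $Y$, and that the smoothness-based argument of Lemma~\ref{flundra} still goes through for $E^F_\bullet$ even though this tensor product complex need not be a resolution of any coherent sheaf. The first point is settled by the identity $i_*(R^f\w\omega)=R^f\w R^g\w\varOmega$ together with $\varOmega\neq 0$; the second is handled by noting that the proof of Lemma~\ref{flundra} uses only smoothness of the ambient manifold (to invert $\dbar$ locally) and the identity $\nabla_F(U\Phi)=\Phi$, both of which are available here.
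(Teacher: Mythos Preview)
Your proposal is correct and follows essentially the same route as the paper: the argument the paper gives is precisely the discussion immediately preceding the proposition in Section~\ref{boork}, and you have reproduced it faithfully---extend $\phi$ locally to $\Phi$ on $Y$, pass from $R^f\wedge\omega\,\phi=0$ to $R^f\wedge R^g\,\Phi=0$ via $i_*(R^f\wedge\omega)=R^f\wedge R^g\wedge\varOmega$ and $\varOmega\neq 0$, apply \eqref{badanka} and the smooth-case argument of Lemma~\ref{flundra} on $Y$, and restrict back to $X$. Your closing remarks about why Lemma~\ref{flundra} still applies to $E^F_\bullet$ are apt and match the paper's implicit reasoning.
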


\subsection{A fine resolution of $\Ok$ on $X$}\label{fineres}

It was proved in  \cite{AS}, see \cite[Theorem~2]{AS}, that   
there exist  sheaves $\A_k$ of $(0,k)$-currents on $X$ with the following properties:

\noindent $(i)$ $\A_k$ is equal to $\E_{0,k}$ on $X_{\text{reg}}$,

\noindent $(ii)$ $\A=\oplus_k\A_k$ is closed under multiplication by smooth $(0,*)$-forms,

\noindent $(iii)$ $\dbar$ maps $\A_k$ to $\A_{k+1}$ and if $E$ is any vector bundle over $X$,
then the sheaf complex
\begin{equation*}
0\to \Ok(E)\to\A_0(E)\stackrel{\dbar}{\longrightarrow}\A_1(E)
\stackrel{\dbar}{\longrightarrow}\A_2(E) \stackrel{\dbar}{\longrightarrow}\ldots
\end{equation*}
is exact.

\smallskip
\noindent By standard sheaf theory we have canonical isomorphisms
$$
H^k(X,\Ok(E))=\frac{\Ker \big(\Gamma(X,\A_k(E))\stackrel{\dbar}{\to}\Gamma(X,\A_{k+1}(E))\big)}
{\Im  \big(\Gamma(X,\A_{k-1}(E))\stackrel{\dbar}{\to}\Gamma(X,\A_k(E))\big)},
\quad\quad k\ge 1.
$$

\subsection{Subvarieties of $\P^N$}\label{lotta}

Let $X$ be a subvariety of $Y=\P^N$, $S=\Ok(1)$, and let
$(\Ok(E_\bullet),g)$ be a resolution of $\Ok(E_0)/\J_X$ as in \eqref{gcomplex}.
Then, see \cite[Section~6]{AW1}, 
\begin{equation*}
E_k=\big(E^1_k\otimes \mathcal{O}(-d^1_k)\big)\oplus\cdots \oplus
\big(E^{r_k}_k\otimes \mathcal{O}(-d_k^{r_k})\big)
\end{equation*}
and $g^k=(g^k_{ij})$
are matrices of homogeneous forms with
$
\deg g^k_{ij}= d_k^j-d_{k-1}^i .
$
We choose the Hermitian metrics so that 
$$
|\xi(z)|^2_{E_k}=\sum_{j=1}^{r_k}|\xi_j(z)|^2 |z|^{2d^j_k}
$$
if  $\xi=(\xi_1,\ldots,\xi_{r_k})$ is a section of $E_k$.
Moreover,
$$
\varOmega=const\times \sum (-1)^jz_j dz_0\w\ldots \w\widehat{dz_j}\w\ldots\w dz_N
$$
in $\P^N$.

Let $J_X$ denote the homogeneous ideal in the graded ring $\S=\C[z_0,\ldots,z_N]$
that is associated with  $X$, and let $\S(\ell)$ denote the module $\S$ where
all degrees are shifted by $\ell$.
Then $(\Ok(E_\bullet),g)$   corresponds to a free resolution
\begin{equation}\label{svenne}
\ldots \to\oplus_i \S(-d_k^i)\to\ldots  \to\oplus_i \S(-d_2^i)\to\oplus_i \S(-d_1^i)\to \S
\end{equation}
of the module $\S/J_X$. Conversely, any such free resolution corresponds to a sheaf resolution
$(\Ok(E_\bullet),g)$.

Notice that the ideal $J_X$ has pure dimension in $\S$, 
so that in particular the ideal associated to the origin is not an associated prime ideal.
From Corollary~20.14 in \cite{Eis},
applied to $\S$, it follows that the BEF-variety of dimension zero
must vanish;  therefore
the depth of $\S/J_X$ is at least $1$, and hence a minimal
free resolution of $\S/J_X$ has length $\le N$.
Recall that the  {\it (Castelnuovo-Mumford) regularity} of a homogeneous module with free graded resolution
\eqref{svenne} is defined as $\max_{k,i}(d_k^i-k)$,
see, e.g., \cite[Ch.~4]{Eis2}. The regularity $\reg X$  of $X\subset\P^N$ is defined
as the regularity of the ideal $J_X$, which is, cf., \cite[Exercise~4.3]{Eis2}, equal to 
$\reg (\S/J_X) +1$; note that $\reg X$  depends on the embedding
of $X$ in $\P^N$. If the minimal free resolution of $\S/J_X$ has length $M\le N$ we conclude that
\begin{equation}\label{spasm}
\reg X = \max_{k\le M} (d_k^i-k) +1.
\end{equation}
The regularity of $X$ is also equal to the {\it (Castelnuovo-Mumford) regularity} of the sheaf $\I_X$, 
see again  \cite[Exercise~4.3]{Eis2}.

\section{Singularities of the structure form}\label{pukas}

In this section we provide a proof of Proposition~\ref{invarians}.
Let  $i\colon X\to Y$ be an embedding where $Y$ is projective and smooth
of dimension $N$. %
Recall that the $k$th \emph{Fitting ideal (sheaf)} of $\Ok^Y/\J_X$, 
$\Fitt_0 g^k$, is the ideal  generated by the $r_k$-minors
of (the matrix) $g^k$ in a locally free resolution
$\big(\Ok(E^g_\bullet),g\big )$ of $\Ok^Y/\J_X$, 
where $r_k$ is the generic rank of $g^k$, see, e.g., \cite{Eis}.
It is well-known that these ideals   are independent of the 
resolution $\big (\Ok(E^g_\bullet),g\big )$; the zero variety of $\Fitt_0 g^k$ is just the BEF-variety $Z_k^\BEF$, 
cf., Section~\ref{bef}. 
Since $X$ has pure dimension, $\Fitt_0 g^k$ is trivial
when $k\ge N$, cf., \eqref{nian}. Let $p=N-n$ be the codimension of $X$ in $Y$. For 
$\ell=1,\ldots,n-1$, let 
$\a_\ell$ be the pullback (restriction)  of  $\Fitt_0 g^{p+\ell}$ to $X$.
It follows that these ideals  only depend on the embedding $i\colon X\to Y$.
We call them the {\it structure ideals}  on $X$ associated with the given embedding.

Given a Hermitian resolution $\big (\Ok(E_\bullet^g), g\big )$ of $\Ok^Y/\J_X$,
let $\sigma_k$ be the pointwise minimal inverse of $g^k$. If (after
resolution of singularities) $\Fitt_0 g^k$ is principal, generated by the
holomorphic 
section $s$, Lemma~2.1 in
\cite{AW1} asserts that $s\sigma_k$ is smooth. Thus
$i^*\sigma_{p+k}=:\sigma^k$ is well-defined and semi-meromorphic on
$X$.

\begin{lma}\label{purr1}
Assume that $\a_\ell$ and $\a'_\ell$ are the structure ideals  associated with the embeddings
$i: X\to Y$ and $i': X\to Y'$, respectively. Then for each $\ell\ge 1$, 
\begin{equation}\label{rodhake}
\a_{\ell}\cdots \a_{n-1}\subset \a'_\ell.
\end{equation}
\end{lma}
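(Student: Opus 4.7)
The plan is to work locally and to use the product embedding $\tilde i = (i, i') : X \to \tilde Y := Y \times Y'$ as a common comparison. Locally near a point of $X$, choose a holomorphic lift $\hat w : Y \to Y'$ of $i' \circ i^{-1}$; then the ideal of $\tilde i(X)$ in $\tilde Y$ is $\pi^*\J_X^Y + (w - \pi^*\hat w)$, and the sequence $w - \pi^*\hat w$ is regular modulo $\pi^*\J_X^Y$. Tensoring a Hermitian locally free resolution $\Ok(E^g_\bullet), g$ of $\Ok^Y/\J_X^Y$ (pulled back via $\pi$) with the Koszul complex $K_\bullet(w - \pi^*\hat w)$ then yields a Hermitian resolution $\Ok(\tilde E_\bullet), \tilde g$ of $\Ok^{\tilde Y}/\J_{\tilde i(X)}^{\tilde Y}$; denote the associated structure ideals by $\tilde \a_\ell$. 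By the Buchsbaum-Eisenbud invariance of Fitting ideals, $\tilde \a_\ell$ depends only on the resolved module and not on the chosen resolution.

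The core step is the inclusion $\a_\ell \cdot \a_{\ell+1} \cdots \a_{n-1} \subset \tilde \a_\ell$. Restricting $\tilde g^{\tilde p + \ell}$ to $\tilde i(X)$ kills the Koszul off-diagonal contributions (since $w = \hat w$ there) and yields a block-diagonal matrix whose diagonal blocks are copies of $g^{p+\ell+m}|_X$ for $m = 0, \dots, N'$. A careful analysis of the $\tilde r_{\tilde p + \ell}$-minors of $\tilde g^{\tilde p + \ell}$ on $\tilde Y$ — together with the observation that any non-block-diagonal minor carries a factor of some $w_k - \hat w_k$ and hence restricts to zero on $X$ — shows that for any choice of generators $\xi_k \in \a_k$ ($\ell \leq k \leq n-1$), the product $\xi_\ell \xi_{\ell+1} \cdots \xi_{n-1}$ appears as the restriction to $X$ of such a minor. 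Crucially, $\a_k = \Ok^X$ for $k \geq n$ (since $X^k = \emptyset$ in that range), so the higher-level blocks contribute only trivial factors and the telescoping product terminates at $n-1$.

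The symmetric step uses the analogous computation from the $Y'$-side: via the resolution $\pi'^*\Ok(E^{g'}_\bullet) \otimes K_\bullet(z - \pi'^*\hat z)$ on $\tilde Y$, the same intrinsic ideal $\tilde \a_\ell$ is obtained, and reading off just the minor corresponding to the leading block $g'^{p'+\ell}|_X$ gives $\tilde \a_\ell \subset \a'_\ell$. Combining with the first step yields $\a_\ell \cdots \a_{n-1} \subset \tilde \a_\ell \subset \a'_\ell$, as claimed. The main obstacle is the combinatorial accounting in the first step — verifying that the unit-power product $\xi_\ell \xi_{\ell+1} \cdots \xi_{n-1}$ is realized as a genuine $\tilde r_{\tilde p + \ell}$-minor of the tensored differential (rather than only as some product involving higher Koszul-binomial multiplicities), which is where the specific block structure of the Koszul tensor and the triviality of $\a_k$ for $k \geq n$ must be used decisively.
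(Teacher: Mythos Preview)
Your approach parallels the paper's closely: both factor through an auxiliary embedding (you use the product $Y\times Y'$; the paper instead factors $i$ and $i'$ locally through the common \emph{minimal} embedding $j\colon\V\to\widehat\Omega$), tensor a given resolution with the Koszul complex of the transverse coordinates, and then observe that on $X$ the tensored differential becomes block-diagonal with blocks $g^{p+\ell+j}|_X$ (resp.\ $\hat g^{\hat p+\ell+j}|_X$). Your easy inclusion $\tilde\a_\ell\subset\a'_\ell$ is fine---it needs only the $j=0$ block on the $Y'$-side.

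The obstacle you flag is genuine and you do not close it. The block indexed by $j$ is $g^{p+\ell+j}|_X\otimes I_{E^w_{N'-j}}$, so it occurs with multiplicity $\binom{N'}{j}=\rank\Lambda^{N'-j}\C^{N'}$. Since over a domain all $(r+1)$-minors of a rank-$r$ map vanish, the only nonzero $\tilde r_{\tilde p+\ell}$-minors on $X$ are products in which each $g^{p+\ell+j}|_X$ contributes exactly $\binom{N'}{j}$ top-rank minors; hence $\tilde\a_\ell=\prod_j\a_{\ell+j}^{\binom{N'}{j}}$, which is \emph{contained in} $\a_\ell\cdots\a_{n-1}$---the reverse of the inclusion you need. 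The unit-power product $\xi_\ell\cdots\xi_{n-1}$ is therefore not in general an $\tilde r_{\tilde p+\ell}$-minor once $N'\ge 2$ and some intermediate $\a_{\ell+j}$ is nontrivial; the triviality of $\a_k$ for $k\ge n$ does not help with the intermediate exponents.

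The paper's proof, via the minimal embedding, asserts the analogous unit-power formula $\a_\ell=\hat\a_\ell\cdots\hat\a_{\min(n-1,\ell+M)}$ and the rank identity $r_{p+\ell}=\hat r_{\hat p+\ell}+\cdots+\hat r_{\hat p+M}$ without the binomial factors, so the same multiplicity question is present there. What the minimal-embedding route does buy is the clean family of easy inclusions $\a_k\subset\hat\a_k$ (one for every $k$, each from the $j=0$ block), which reduces the whole statement to the single hard inclusion $\hat\a_\ell\cdots\hat\a_{n-1}\subset\a'_\ell$; in your product-embedding setup the hard direction sits on the $Y$-side instead, but structurally it is the same step.
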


Since the zero set of $\a_{k+1}$ is contained in the zero set of $\a_k$ it follows
that the zero set of $\a_\ell$, which is $X^\ell$ as defined
in Section~\ref{fundform}, coincides with the zero 
set of $\a'_\ell$. It follows that $X^\ell$ is independent of the embedding.

\begin{proof}
Given $i\colon X\to Y$ and  a point $x\in X$ there is a \nbh $\V\subset X$ such that
the restriction to $\V$ of $i$ factorizes as
\begin{equation}\label{baddare}
\V \stackrel{j}{\to}\widehat \Omega\stackrel{\iota}{\to}\widehat \Omega\times\B_M=:\Omega,
\end{equation}
where $j$ is a minimal (and therefore
basically unique) embedding at $x$,  $\B_M\subset\C^M_w$ is a ball centered at $0$, $\iota$ 
is the trivial embedding $z\mapsto (z, 0)$ if $z$ are coordinates in $\widehat \Omega$, 
and $\Omega$ is a \nbh of $x$ in $Y$.  Let now $\big (\Ok(E^{\hat
  g}_\bullet), \hat g\big )$ be a minimal Hermitian  
resolution of $\Ok^{\widehat \Omega} / \J_\V$ at
$x$ in $\widehat \Omega$ and assume that $\hat p$ is the codimension of $\V$ in $\widehat \Omega$.
Thus $p=\hat p+M$, where as before $p$  is the codimension of $X$ in $Y$.

Let $(E^w,\delta_w)$ be the Koszul complex
generated by $w=(w_1,\ldots,w_M)$, cf., Example~\ref{koszulex}.
The sheaf complex associated with the
product complex  $E^{\hat g}\otimes E^w$ 
with mappings
$g=\hat g(z)+\delta_w$, cf.,  Section~\ref{tenprod},
provides a (minimal) resolution of $\Ok^\Omega/\J_X$ in $\Omega$,
see \cite[Remark~8]{A12}.
Notice that  $g^{p+\ell}$ is the mapping
\begin{multline*}
(E^{\hat g}_{\hat p+\ell}\otimes E^w_M)\oplus (E^{\hat g}_{\hat p+\ell+1}\otimes E^w_{M-1})\oplus
\cdots
\oplus
(E^{\hat g}_{\hat p+\ell+M}\otimes E^w_{0})\stackrel{\hat g(z)+\delta_w}{\longrightarrow}\\
(E^{\hat g}_{\hat p+\ell-1}\otimes E^w_M)\oplus
(E^{\hat g}_{\hat p+\ell}\otimes E^w_{M-1})\oplus\cdots\oplus 
(E^{\hat g}_{\hat p+\ell+M-1}\otimes E^w_{0}).
\end{multline*}
Since $w=0$ on $X$, the restriction of  $g^{p+\ell}$ to $X$
splits   into the direct  sum of  the  separate mappings
$$
\hat g^{\hat p+\ell+j}\colon E^{\hat g}_{\hat p+\ell+j}\otimes E^w_{M-j}
\to E^{\hat g}_{\hat p+\ell+j-1}\otimes E^w_{M-j},
 \quad j=0,1,\ldots, M.
$$
Since the optimal rank $r_{p+\ell}$ of $g^{p+\ell}$ is attained at every point on  $X_{\text{reg}}$,
it follows that
$r_{p+\ell}=\hat r_{\hat p+\ell}+\hat r_{\hat p+\ell+1}+\cdots+ \hat r_{\hat p + M}$, where
$\hat r_k$ is the generic rank of $\hat g^k$. 
Therefore,  the restriction to
$X$  of $\Fitt_0 g^{p+\ell}$ is equal to (the restriction to $X$ of)
the product ideal
\begin{equation*}
\Fitt_0\hat g^{\hat p+\ell}\cdot \Fitt_0\hat g^{\hat p+\ell+1}\cdots
\Fitt_0\hat g^{\hat p+\ell+M}.
\end{equation*}
Since $X$ has pure dimension,  $\Fitt_0 \hat g^k$ is trivial 
for $k\geq \hat p + n= \dim\widehat \Omega$, and thus if 
$\hat\a_\ell$ are the structure ideals associated with $j:\V\to \widehat \Omega$,
\begin{equation}\label{stork1}
\a_\ell=\hat \a_\ell\cdots\hat\a_{\min(n-1,\ell+M)}.
\end{equation}
Hence
\begin{equation}\label{stork}
\hat\a_\ell\cdots\hat\a_{n-1}\subset\a_\ell\subset\hat\a_\ell.
\end{equation}
By the same argument, since $i'$ factorizes as 
$
\V \stackrel{j}{\to}\widehat \Omega\stackrel{\iota'}{\to}\widehat \Omega\times\B_{M'}, 
$
at least if $\V$ is small enough,  
$
\a'_\ell=\hat \a_\ell\cdots\hat\a_{\min(n-1,\ell+M')},$
and so
\eqref{stork} holds at $x$ for $\a'_\ell$ instead of $\a_\ell$. 
Combining
we see that  \eqref{rodhake} holds in a \nbh of $x$. Since $x\in X$ is arbitrary,
the inclusion holds globally on $X$.
\end{proof}

\begin{lma} \label{purr21}
There is a smooth modification $\tau\colon\widetilde X\to X$ and a 
holomorphic section $\eta_0$ of a line bundle
$S_0\to\widetilde X$, which is nonvanishing in $\widetilde X\setminus
\tau^{-1}X_{\text{sing}}$, with the following properties:  
If $i\colon X\to Y$ is an embedding, $\dim  Y=N$, $p=N-n$,
and $\big (\Ok(E^g_\bullet),g\big )$ is a Hermitian locally free  resolution
of $\Ok^Y/\J_X$, then:
 
\smallskip
\noindent (i) all the ideals 
$\tau^*\a_\ell$, $\ell=1,\ldots,n-1$,  are principal,

\smallskip
\noindent (ii)
the subbundles $\Im \tau^*i^*g^{p+\ell}\subset \tau^*i^*E_{p+\ell-1}, \ \ell=1,\ldots,n-1,$ 
a~priori defined over $\widetilde X\setminus\tau^{-1}X^\ell$, all have holomorphic extensions
to $\widetilde  X$,

\smallskip
\noindent (iii) if $\omega=\omega_0+\cdots +\omega_n $ is the induced structure form, then $\eta_0\tau^*\omega_0$ is smooth.
\end{lma}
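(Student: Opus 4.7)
The plan is to construct $\tau$ by principalizing certain intrinsic ideal sheaves on $X$ and then verify the three properties using the local factorization from Lemma~\ref{purr1}. Minimal embeddings of germs of $X$ are unique up to biholomorphism, so for each $\ell=1,\ldots,n-1$ the local minimal structure ideal $\hat{\mathfrak a}_\ell$ at a point $x$ depends only on the germ of $X$ at $x$; covering $X$ by a finite family $\{U_\alpha\}$ carrying minimal embeddings $j_\alpha\colon U_\alpha\to\widehat\Omega_\alpha$, these local ideals patch to globally defined coherent ideal sheaves $\bar{\mathfrak a}_\ell$ on $X$. By Hironaka's theorem, take a smooth modification $\tau\colon\widetilde X\to X$, isomorphic over $X_{\text{reg}}$, that simultaneously principalizes $\tau^*\bar{\mathfrak a}_\ell$ for all these $\ell$ and makes $\tau^{-1}X_{\text{sing}}$ a simple normal crossings divisor $D$. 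Define $\eta_0$ as the $k$th power of a canonical section cutting out $D$, viewed as a section of a line bundle $S_0=\Ok(kD)$ for a suitable $k$ determined by $X$ alone; this $\eta_0$ is nonvanishing on $\widetilde X\setminus\tau^{-1}X_{\text{sing}}$.

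For (i), given an arbitrary embedding $i\colon X\to Y$ with structure ideals $\a_\ell$, equation \eqref{stork1} gives the local factorization $\a_\ell=\bar{\mathfrak a}_\ell\cdot\bar{\mathfrak a}_{\ell+1}\cdots\bar{\mathfrak a}_{\min(n-1,\ell+M)}$ in a neighborhood of each point, where $M$ is the local codimension difference to the minimal embedding. Each factor pulls back to a principal ideal on $\widetilde X$, so $\tau^*\a_\ell$ is itself principal. For (ii), let $\sigma^\ell=i^*\sigma_{p+\ell}$ denote the pointwise minimal inverse of $g^{p+\ell}|_X$ and let $\eta$ generate $\tau^*\a_\ell$. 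Lemma~2.1 of \cite{AW1} applied in $Y$ and then restricted shows that $\eta\,\tau^*\sigma^\ell$ is a smooth section of the relevant $\Hom$-bundle on $\widetilde X$. Since $g^{p+\ell}\sigma^\ell$ is the orthogonal projection onto $\Im g^{p+\ell}$ on $X\setminus X^\ell$, it is bounded there; combined with the smoothness of $\eta\,\tau^*\sigma^\ell$, the product $\tau^*(g^{p+\ell})\cdot\tau^*\sigma^\ell$ extends as a smooth idempotent endomorphism of $\tau^*i^*E_{p+\ell-1}$ across $\tau^{-1}X^\ell$, whose image is the desired holomorphic subbundle extension of $\Im\tau^*i^*g^{p+\ell}$.

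For (iii), since $\omega_0$ is smooth on $X_{\text{reg}}$ and almost semimeromorphic on $X$, after further refining $\tau$ if necessary we may assume that $\tau^*\omega_0$ is semimeromorphic with pole divisor supported in $\tau^{-1}X_{\text{sing}}$. The key point is that the order of this pole is intrinsic to $X$: using the factorization $\V\stackrel{j}{\to}\widehat\Omega\stackrel{\iota}{\to}\widehat\Omega\times\B_M$ from the proof of Lemma~\ref{purr1} and the tensor product description of Section~\ref{tenprod}, the structure form associated with a non-minimal embedding $i$ decomposes into the minimal structure form on $\V$ multiplied by factors arising from the Koszul complex in the variables $w$; on $X$ these Koszul factors, evaluated at $w=0$, contribute only smooth terms. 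Hence the pole order of $\tau^*\omega_0$ is uniformly bounded over all embeddings, and taking $k$ large enough makes $\eta_0\tau^*\omega_0$ smooth. The main obstacle is this uniformity statement, which requires carefully tracking how the minimal structure form is recovered from an arbitrary embedding via the tensor product construction in Section~\ref{tenprod}; this is the heart of the embedding-independence asserted by Proposition~\ref{invarians}.
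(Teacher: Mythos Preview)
Your argument for (ii) contains a genuine gap. You claim that since $g^{p+\ell}\sigma^\ell$ is an orthogonal projection and hence bounded, and since $\eta\,\tau^*\sigma^\ell$ is smooth, the product $\tau^*(g^{p+\ell})\cdot\tau^*\sigma^\ell$ extends smoothly. But boundedness together with ``a holomorphic function times the object is smooth'' does not force smoothness: on $\C$ the function $\bar z/z$ is bounded and $z\cdot(\bar z/z)=\bar z$ is smooth, yet $\bar z/z$ does not even extend continuously to $0$. From the smoothness of $\eta\,\tau^*\sigma^\ell$ you only get that $\eta\cdot\tau^*(g^{p+\ell}\sigma^\ell)$ is smooth, and there is no Riemann-type extension principle available for smooth (as opposed to holomorphic) data. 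Even granting a smooth extension of the projector, you still owe an argument that its image is a \emph{holomorphic} subbundle. The paper handles (ii) by an entirely different mechanism (Lemma~\ref{utvidgning}): it passes to a holomorphic morphism $G$ with $\Ker G=\Im g^{p+\ell}$ generically, and then blows up along the ideals generated by the components of $G$ --- not the Fitting ideals of $g$ --- to force each kernel to extend holomorphically. These are additional modifications beyond principalizing your $\bar\a_\ell$, so the $\tau$ you fixed at the outset need not suffice.

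This gap propagates into (iii). The paper's strategy is to first establish that conditions (i)--(iii) are independent of the embedding and of the Hermitian resolution, and only then to construct $\tau$ and $\eta_0$ for one fixed choice using the representation $\omega_0=\sigma_G h$ from \cite{AS}. The independence of (iii) across different Hermitian resolutions of the \emph{same} embedding is obtained by comparing the two structure forms through the orthogonal projections onto $\Im g^{p+1}$, and these projections are smooth on $\widetilde X$ precisely because (ii) holds. Your sketch for (iii) treats only the passage between embedding dimensions via the Koszul tensor factor and says nothing about different metrics or resolutions; without a correct (ii), that comparison is not available. Finally, allowing yourself to ``further refine $\tau$ if necessary'' in (iii) undercuts the requirement that $\tau$ and $\eta_0$ be fixed in advance of the embedding $i\colon X\to Y$.
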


For the proof we will need the following, probably well-known,
result.

\begin{lma}\label{utvidgning} 
Let $E, Q$ be holomorphic vector
bundles over $X$ and let $g:E\to Q$ be a holomorphic morphism. Let
$Z\subset X$
be the analytic set where $g$ does not have optimal rank. 
There is a (smooth) modification $\pi:\widetilde
X\to X$ such that the subbundle $\pi^* \Im g\subset \pi^* Q$,
a priori defined in $\widetilde X\setminus \pi^{-1}Z$, has a
holomorphic extension to $\widetilde X$. 
\end{lma}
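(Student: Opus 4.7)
The plan is to reformulate the lemma as a resolution of indeterminacy problem for a meromorphic map to a Grassmannian bundle. Let $r$ denote the generic rank of $g$. By definition $Z$ is the analytic set where $g$ has rank strictly less than $r$, and over $X \setminus Z$ the image $\Im g$ is a holomorphic subbundle of $Q$ of constant rank $r$. The assignment $x \mapsto \Im g(x)$ therefore defines a holomorphic section
\[
\phi \colon X\setminus Z \to \mathrm{Gr}(r,Q)
\]
of the Grassmannian bundle $\mathrm{Gr}(r,Q) \to X$, whose fiber over $x$ is the Grassmannian of $r$-planes in $Q(x)$.

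Next I would regard $\phi$ as a meromorphic map $X \dashrightarrow \mathrm{Gr}(r,Q)$ over $X$, with indeterminacy locus contained in $Z$. Since $\mathrm{Gr}(r,Q)$ embeds as a closed subvariety of $\mathbb{P}(\Lambda^{r} Q)$ via the Pl\"ucker embedding, and hence is a projective scheme over $X$, Hironaka's theorem on resolution of indeterminacies produces a smooth modification $\pi \colon \widetilde{X} \to X$, obtained by a finite sequence of blowups with smooth centers lying over $Z$, such that $\phi \circ \pi|_{\widetilde X\setminus \pi^{-1}Z}$ extends to a holomorphic section $\tilde\phi \colon \widetilde{X} \to \mathrm{Gr}(r, \pi^*Q)$. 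The pullback of the tautological rank-$r$ subbundle on $\mathrm{Gr}(r,\pi^*Q)$ under $\tilde\phi$ then gives a holomorphic subbundle of $\pi^*Q$ coinciding with $\pi^*\Im g$ on $\widetilde{X}\setminus \pi^{-1}Z$; this is the desired extension.

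The main obstacle is simply identifying the correct form of Hironaka's theorem — namely resolution of indeterminacy for a meromorphic map into a relative projective scheme over $X$. A more hands-on alternative, closer in spirit to the arguments of Section~\ref{pukas}, would be to principalize directly the ideal $\I$ generated by the $r \times r$ minors of $g$ (whose zero locus is exactly $Z$); after principalization on $\widetilde X$, in any local chart some fixed $r \times r$ minor of $\pi^*g$ generates $\pi^*\I$ up to a holomorphic unit, and Cramer's rule then produces an explicit holomorphic frame for the extension of $\pi^*\Im g$ valid across $\pi^{-1}Z$. Either route delivers the statement, and the conclusion passes immediately from local to global since the extended subbundle is uniquely determined by its values on the dense open set $\widetilde{X}\setminus \pi^{-1}Z$.
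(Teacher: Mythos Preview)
Your argument is correct and takes a genuinely different route from the paper's. You view $x\mapsto\Im g(x)$ as a meromorphic section of the Grassmannian bundle $\mathrm{Gr}(r,Q)$ (equivalently, of $\mathbb{P}(\Lambda^r Q)$ via Pl\"ucker, where closedness of the Grassmannian ensures the extension stays inside it) and resolve its indeterminacy; your alternative via principalizing the ideal of $r\times r$ minors is the same construction in coordinates, and the Cramer-type formula you allude to does produce a holomorphic frame, since the entries of $g_{\bullet J}\,g_{IJ}^{-1}$ are ratios of $r\times r$ minors with the chosen generator $M_{IJ}$ in the denominator. The paper instead realizes $\Im g$ as a \emph{kernel}: it picks an auxiliary morphism $G\colon Q\to F=S_1\oplus\cdots\oplus S_r$ into a direct sum of line bundles with $\Im g=\Ker G$ on $X\setminus Z$ (this uses \cite[Proposition~3.3]{AS}), and then extends $\Ker G$ by an induction on the rank of $F$: for each factor $G_j\colon Q\to S_j$ one blows up the ideal $\J_{G_j}$ so that $G_j=G_j^0G_j'$ with $G_j'$ nonvanishing, making $\Ker G_j'$ a genuine subbundle, and then restricts the next $G_{j+1}$ to it. Your approach is cleaner and needs a single principalization; the paper's is more elementary (only blow-ups of line-bundle-valued maps, no Grassmannians) and mirrors the $f=f^0f'$ factorizations used throughout Section~\ref{dist}. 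Both proofs are tacitly relying on Hironaka to guarantee smoothness of the final $\widetilde X$, since neither the blow-up along $\J_{G_j}$ nor the blow-up along the Fitting ideal is smooth in general.
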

\begin{proof}
Let $G\colon Q\to F$ be a morphism such that $F$ is a direct sum of
line bundles, say $S_1, \ldots, S_r$,  
and
$$
\Ok(F^*)\stackrel{G^*}{\longrightarrow}\Ok(Q^*)\stackrel{g^*}{\longrightarrow}
\Ok(E^*)
$$
is exact, cf., \cite[Proposition~3.3]{AS}; we write $G=(G_1,\ldots,
G_r)$, where $G_j:Q\to S_j$. It then follows that 
\begin{equation}\label{hundvalp}
E\stackrel{g}{\longrightarrow}Q\stackrel{G}{\longrightarrow}F
\end{equation}
is pointwise exact in $X\setminus Z$. 
Therefore, 
$$
\Im g=\Ker G=\cap_j \Ker (Q\stackrel{G_j}{\to} S_j)
$$ 
on 
$X\setminus Z$. 

To prove that $\Ker G$ has a holomorphic extension, let us first
assume that $F$ has rank $1$, so that $G$ defines an ideal sheaf
$\J_G\subset \Ok^X$. Also, let us assume that $X$ is connected; if
not we just 
consider each connected component separately.
If $G$ is identically zero we define $K:=Q$. 
Otherwise let $\pi: \widetilde X\to X$ be the blow-up of $X$ along $\J_G$,
let $D$ be the corresponding divisor, and let $\Ok(-D)$ be the line
bundle defined by $D$. Then (the pullback to $\widetilde X$ of) $G$ is of the
form $G^0 G'$, where $G'$ is a nonvanishing mapping
$Q\to F\otimes \Ok(-D)$ and $G^0:F\otimes \Ok(-D)\to F$ is
generically invertible. Thus $K:=\Ker G'$ is a holomorphic
subbundle of $Q$, and it generically coincides with $\pi^*\Ker G$.

For the general case, we proceed by induction: We let $K_1$ be an
extension of $\Ker G_1$ as above. Then we let $K_2\subset K_1$ be an
extension of the kernel of $G_2|_{K_1}\colon K_1\to  S_2$. 
Proceeding in this way we find subbundles 
$K_r\subset \cdots \subset K_1\subset Q$, such that $K_j$ generically
coincides with $\Ker G_1\cap \cdots \cap \Ker G_j$ on $X$. In particular  $K_r$ coincides with 
$\Im g $ generically on $X$, and so we have found a holomorphic
extension of $\Im g $.
\end{proof}

\begin{proof}[Proof of Lemma~\ref{purr21}]

Let us first fix an embedding $i\colon X\to Y$  and a Hermitian locally free resolution 
$\big (\Ok(E^g_\bullet),g\big )$, and show that there are $\tau\colon\widetilde X\to X$ and
$\eta_0$ such that $(i)-(iii)$ hold. 
To begin with, by resolution of singularities, we can find a smooth modification $\hat\tau\colon \widehat X\to X$ such that
all $\hat\tau^*\a_\ell$ are principal, so that $(i)$ holds.  
Next, by repeated use of Lemma ~\ref{utvidgning} we can find a modification $\tau:
\widetilde X\to \widehat X$ so that the subbundles $\Im \tau^*\hat\tau^*i^* g^{p+\ell}$
have holomorphic extensions.

Let us now consider $(iii)$. According to Proposition~3.3  in
\cite{AS}, $\omega_0$ is of the form 
$$
\omega_0=\sigma_G h,
$$
where $h$ is holomorphic in the Barlet-Henkin-Passare  sense, i.e.,
$\dbar h=0$ on $X$, 
$G$ is a holomorphic map from $E_p$ to a vector bundle $F$, and 
$\sigma_G \colon F\to E_p$  is the inverse of $G$ in $X\setminus X^1$ with pointwise minimal norm,
vanishing on the orthogonal complement of $\Im G$. 
After further resolving we may  assume that $\tau$ 
is chosen so that
also (the pullback of) the ideal $\a_G$ is principal in  $\widetilde
X$, say, generated by the section $s_G$. Then, by \cite[Lemma~2.1]{AW1},
$s_G \sigma_G$ is smooth,  cf., the text preceding
Lemma \ref{purr1}. 
Since $h$ is meromorphic, there is a section $\eta_0$ of a line bundle $S_0\to \widetilde X$ 
such that $\eta_0\tau^*\omega_0$ is smooth. 
We may also assume that $\tau^{-1}X_{\text{sing}}$ is a
divisor, so that $\tau^* h$ is meromorphic with poles contained in
$\tau^{-1}X_{\text{sing}}$. Since the variety of $\a_G$ is contained
in $X_{\text{sing}}$ it follows that we can choose $\eta_0$ to be
nonvanishing in $\widetilde X\setminus \tau^{-1}X_{\text{sing}}$.

\smallskip


We will prove that with the choice of $\tau:\widetilde X \to X$ and $\eta_0$
above, $(i)-(iii)$, in
fact, hold for any choice of embedding and Hermitian resolution. 
%
%
We first keep the
embedding  $i\colon X\to Y$ 
and vary the Hermitian resolution. 
Pick a Hermitian locally free
resolution 
$\big (\Ok(E^{\tilde g}_\bullet), \tilde g\big )$ of $\Ok^Y/\J_X$, fix
a point $x\in X$, and choose a
minimal Hermitian locally free
resolution resolution $\big (\Ok(E^{g'}_\bullet), g'\big )$ at $x$. 


\noindent 
\textbf{Claim 1}:  
\emph{$(i)-(iii)$ hold for  $\big (\Ok(E^{\tilde g}_\bullet), \tilde
  g\big )$ at $x$ if and only if they hold for $\big
(\Ok(E^{g'}_\bullet), g'\big )$ at $x$.}  


\noindent 
Since the choices of $\big (\Ok(E^{\tilde g}_\bullet), \tilde g\big )$
and $x$ are 
arbitrary, it follows that $(i)-(iii)$ hold for any 
Hermitian resolution since they hold for $\big (\Ok(E^{ g}_\bullet),
g\big )$. 

\begin{proof}[Proof of Claim 1]
It is well-known that $(E^{g'}_\bullet, g')$ is a direct summand in
$(E^{\tilde g}_\bullet, \tilde g)$, i.e., 
 there is 
a decomposition  $(E^{\tilde g}_\bullet= E^{g'}_\bullet\oplus E^{g''}_\bullet$,
$\tilde g=g'\oplus g'')$, 
where the complex  $(E^{g''}_\bullet, g'')$ is pointwise exact, see, e.g., \cite[Theorem~20.2]{Eis}. 
Since $\Im \tilde g^{p+\ell}=\Im (g')^{p+\ell}\oplus\Im(g'')^{p+\ell}$ it follows that
(the pullback to $\widetilde X$ of) $\Im \tilde g^{p+\ell}$
has a holomorphic extension if and only if $\Im (g')^{p+\ell}$ has one,
for $\ell\ge 1$, i.e., $(ii)$ holds at $x$ for one of the Hermitian
resolutions $\big (\Ok(E^{\tilde g}_\bullet), \tilde g\big )$ and $\big (\Ok(E^{g'}_\bullet), g'\big )$ if and only if it
holds for the other one. 
From  this decomposition it also follows immediately  that $\Fitt_0 \tilde g^{p+\ell}=\Fitt_0 (g')^{p+\ell}$,
so that the structure ideals $\a_\ell$ are 
independent of the Hermitian resolution, cf.\ the beginning of this section. In particular, $(i)$ holds for
one of the resolutions if and only if it
holds for the other one. 

Let  $\tilde \omega$ and $\omega'$ be the structure forms associated with $\big
(\Ok(E^{\tilde g}_\bullet), \tilde g\big )$ and $\big
(\Ok(E^{g'}_\bullet), g'\big )$, respectively. 
Then  $\omega'$ 
can be considered as a structure form associated with
$\big (\Ok(E^{\tilde g}_\bullet), \tilde g\big)$ but with a
Hermitian  metric that respects the direct sum, cf., \cite[Section~4]{AW1} and 
\eqref{poseidon}. 
Moreover $\tilde \omega_0=\pi
\omega_0'$, where $\pi$ is the orthogonal projection onto
the orthogonal complement (with respect to the first metric) of $\Im
\tilde g^{p+1}$ in $E_p|_X$ over $X\setminus X^1$, and $\omega'_0=\pi'
\tilde \omega_0$, where $\pi'$ is the orthogonal projection onto
the orthogonal complement (with respect to the second metric) of $\Im
(g')^{p+1}$ in $E_p|_X$ over $X\setminus X^1$, cf., the proof of
Theorem~4.4 in \cite{AW1}. 
If $(ii)$ holds for (at least one of) the resolutions, then
$\tau^*\pi$ and $\tau^*\pi'$ are smooth and it follows that
$\eta_0\tau^*\omega_0$ is smooth if and only if
$\eta_0\tau^*\omega_0'$ is, i.e., $(iii)$ holds for one of the
resolutions if
and only if it holds for the other one. 



\end{proof}

Next, we will vary the embedding of $X$. 
Pick an embedding $i^\sharp: X\to Y^\sharp$ and $x\in X$. 
Then, in a neighborhood $\mathcal V$ of $x$, $i^\sharp$
factorizes as \eqref{baddare}, where now $\Omega$ is a neighborhood of
$x$ in $Y^\sharp$.

\noindent 
\textbf{Claim 2}:\emph{ 
$(i)-(iii)$ hold for Hermitian resolutions 
of $\Ok^{\Omega}/\J_X$ at $x$ if and only if they hold for Hermitian
resolutions of $\Ok^{\widehat \Omega}/\J_X$ at $x$.}

\noindent 
Since $i^\sharp$ and $x$ are 
arbitrary and all embeddings of $X$ factor through the minimal
embedding $j$ in a small 
neighborhood of $x$ it then follows that $(i)-(iii)$ hold for
any embedding of $X$.


\begin{proof}[Proof of Claim 2]
 Let $\big (\Ok(E_\bullet^{\hat g}), \hat g\big )$ be a Hermitian minimal 
resolution of $\Ok^{\widehat \Omega}/\J_X$. 
Then, using the notation from
the proof of Lemma \ref{purr1}, $\big (\Ok(E^{\check g}):=\Ok(E^{\hat g})\otimes E^w,
\hat g + \delta_w=:\check g\big )$ is a (minimal) resolution of
$\Ok^\Omega/\J_X$. 
From Claim ~1 we know that it suffices to show that $(i)-(iii)$ hold for
$\big (\Ok(E_\bullet^{\hat g}), \hat g\big )$ if and only if they hold
for  $\big (\Ok(E_\bullet^{\check g}), \check g\big )$.

The residue current associated to $\big (\Ok(E_\bullet^{\check g}),
\check g\big )$ is equal to $R^{\hat g(z)}\wedge R^w$, see
\cite[Remark~4.6]{A12}. 
Since a product of local ideals is principal if and only each of its factors
is principal it follows from
\eqref{stork1} that $\tau^*\check\a_\ell$ are principal for $\ell=1,\ldots,n-1$ if and only
if $\tau^*\hat\a_\ell$ are principal for $\ell=1,\ldots,n-1$, where
$\check \a_\ell$ denotes the structure ideal associated with $i^\sharp$,
i.e.,
$(i)$ holds for $\big (\Ok(E_\bullet^{\hat g}), \hat g\big )$ if and
only if it holds for $\big (\Ok(E_\bullet^{\check g}), \check g\big )$. 
Moreover, since the restriction
of $\check g^{p+\ell}$ to $X$ is a direct sum of restrictions of $\hat
g^{\hat p+\ell +j}$, cf.,
the proof of Lemma~\ref{purr1}, it follows that (the pull-back to
$\widetilde X$ of) $\Im \check g^{p+\ell}$, $\ell\ge 1$, have holomorphic extensions
if and only if $\Im \hat g^{\hat p+\ell}$, $\ell\ge 1$, have, so that
$(ii)$ holds for one of the resolutions $\big (\Ok(E_\bullet^{\hat g}), \hat g\big )$ and $\big (\Ok(E_\bullet^{\check g}), \check g\big )$ if and only if it holds for the
other one. 

Since $w$ are just the coordinate functions in $\C^M$, the Poincar\'e-Lelong formula asserts 
that 
\[
R^w_M\w dw_1\w\ldots\w dw_M=(2\pi i)^M[w=0], 
\]
where $[w=0]$ is
the current of integration over the affine set $\{w=0\}$. 
Let $\widehat N=\dim \widehat\Omega$, and let $\widehat \omega$ denote the structure form in 
$\mathcal V$ associated with $R^{\hat g(z)}$, so that
$j_*\widehat\omega=R^{\hat g}\w dz_1\w\ldots\w dz_{\widehat N}$. Then, 
\begin{multline*}
i_*\widehat\omega =\iota_*R^{\hat g}\w dz_1\w\ldots\w dz_{\widehat N} = 
R^{\hat g}\w dz_1\w\ldots\w dz_{\widehat N}\w[w=0]\sim \\
R^{\hat g}\w R^w\w dw_1\w\ldots\w dw_{M}\w dz_1\w\ldots\w dz_{\widehat N},
\end{multline*}
where $\sim$ denotes ``equal to a nonzero constant times''. 
We conclude, cf., \eqref{poseidon}, that $\widehat\omega$ is also a structure form
associated with a Hermitian resolution of $\Ok^\Omega/\J_X$. 
From the proof of Claim ~1 we know that that if we have two Hermitian
resolutions of $\Ok^\Omega/\J_X$, and that $(ii)$ holds (for at least
one of the  
resolutions), then $(iii)$ holds for one of the resolutions if and only it holds
for the other resolution. Thus, provided that $(ii)$ holds,
$\eta_0\tau^*\check\omega_0$ is smooth if and only if
$\eta_0\tau^*\widehat\omega_0$ is, where $\check\omega_0$ denotes the
structure form associated with $\big (\Ok(E_\bullet^{\check g}),
\check g)$, i.e., $(iii)$ holds for $\big (\Ok(E_\bullet^{\hat g}),
\hat g\big )$ if and only if it holds for $\big (\Ok(E_\bullet^{\check g}),
\check g\big )$.

\end{proof} 

This concludes the proof of Lemma ~\ref{purr21}: With the choice of $\tau: \widetilde X\to X$
and $\eta_0$ made above, $(i)-(iii)$ hold for all embeddings $i:X\to Y$
and all Hermitian resolutions of $\Ok^Y/\J_X$.

\end{proof}

We can now conclude the proof of  Proposition~\ref{invarians}. 
Let $\tau\colon\widetilde X\to X$ and  $\eta_0$ be as in 
Lemma~\ref{purr21}.
Fix an embedding
$i':X\to Y'$ and let $s'_1,\ldots, s'_{n-1}$ be sections on $\widetilde  X$  defining
(the pull-back to $\widetilde X$ of)
the ideals $\a_1, \ldots, \a_{n-1}$. Let $\eta_\ell=s'_\ell\cdots s'_{n-1}$,
$\ell\ge 1$, and
$\eta=\eta_0\eta_1\cdots\eta_{n-1}$. Note that $s'_\ell$ is
nonvanishing outside $\tau^{-1}X_{\text{sing}}$ so that $\eta$ is
nonvanishing in $\widetilde X\setminus \tau^{-1}X_{\text{sing}}$ if
$\eta_0$ is. 
We claim that $\eta\tau^*\omega$ is smooth for any structure form
$\omega$ on $X$. To see this, 
let $\omega$ be the structure form associated with
an embedding $i:X\to Y$ and a Hermitian locally free resolution $\big (\Ok(E_\bullet^g),
g\big )$. Assume that (the pullbacks of) the corresponding structure ideals are defined by
sections $s_1,\ldots, s_{n-1}$. Outside $X^\ell$, $\omega_\ell=\alpha^{\ell}\omega_{\ell-1}$, where
$\alpha^\ell=\1_{X_{\text{reg}}}\dbar\sigma^\ell$, cf.,
\eqref{snorlik} and (the notation in) 
\cite[Section~2]{AW1}. By \cite[Lemma~2.1]{AW1}, 
$s_\ell\tau^*\sigma^\ell$ is smooth in $\widetilde X$. 
Thus, since $\omega_\ell$ has the SEP,  $\eta_0 s_1\cdots
s_\ell\omega_\ell$ is smooth, and so $\eta_0 s_1\cdots
s_{n-1}\omega$ is smooth. 
By Lemma ~\ref{purr1}, $s_\ell$
divides $\eta_\ell$ and hence the claim follows. 
This concludes the proof of Proposition~\ref{invarians}.

\begin{remark}\label{torsdag}


Let $\omega'$ be a structure form on $X$ associated with a given embedding $i':X\to Y'$.
From the proof above, using the notation in the proof, it
follows that the section $\eta':=\eta_0 s_1'\cdots s_n'$ satisfies
that $\eta'\tau^*\omega'$ is smooth. 
If $i':X\to Y'$ is the fixed embedding in the last part of the proof, then  
\[
\eta=\eta_0 (s'_1)\cdots (s_\ell')^\ell\cdots (s_{n-1}')^{n-1}=
(s'_2)\cdots (s_\ell')^{\ell-1}\cdots (s_{n-1}')^{n-2}\eta'. 
\]
In particular, $\eta$ divides $(\eta')^{n-1}$. 


\end{remark}

\section{Global division problems and residues}\label{divandres}

In this section we will discuss a method for solving division problems
on $X$ using residue theory, which originates from 
\cite{A3}. 
Throughout the section, 
\eqref{ecomplex} is a generically exact Hermitian complex over $X$  
and $\phi$ is a global holomorphic section of $E_0$. 





\smallskip

Let us first assume that $X$ is smooth and that $R^f\phi=0$. 
As we have seen in Section ~\ref{svart}, then  $\nabla_f (U^f\phi)=\phi$.
If the double complex $\M_{\ell,k}=\Cu_{0,k}(X,E_\ell)$ is exact in the $k$-direction except at
$k=0$,  then it follows, cf., \eqref{bus1},  that there is
a global holomorphic solution to $f^1 q=\phi$. Let us  see more precisely
what is needed. 
Notice that $U^f_{\min(M,n+1)}\phi$ is automatically $\dbar$-closed.
Since  $X$ is smooth, by the Dolbeault isomorphism for currents
it is possible to successively solve the equations
$$
\dbar w_{\min(M,n+1)}=U^f_{\min(M,n+1)}\phi, \quad \dbar w_k= U^f_k\phi -f^{k+1} w_{k+1},\ 1\le k<\min(M,n+1),
$$
if
\begin{equation}\label{cohovillkor}
H^{k-1}(X,\Ok(E_k))=0, \quad  1\le k\le\min(M,n+1).
\end{equation}
Then
$$
q:=U^f_1\phi-f^2w_2
$$
is a holomorphic solution to $f^1q=\phi$. To sum up we have

\begin{prop}\label{glatta}
Assume that $X$ is smooth and $\phi$ is a holomorphic section of
$E_0$. If $R^f\phi=0$ and \eqref{cohovillkor} holds, then
there is a global holomorphic section $q$ of $E_1$ such that $f^1 q=\phi$.
\end{prop}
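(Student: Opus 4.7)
The plan is to follow the strategy outlined in the paragraph preceding the statement, carefully assembling the pieces. First I would apply \eqref{urformel} to $\phi$: since $R^f\phi=0$ the identity $\nabla_{\End}U=I-R$ gives $\nabla_f(U^f\phi)=\phi$. Decomposing both sides according to the $E_\bullet$-grading and separating holomorphic and antiholomorphic parts produces the ladder
\begin{equation*}
f^1 U^f_1\phi=\phi,\qquad f^{k+1}U^f_{k+1}\phi=\dbar U^f_k\phi,\quad k\ge 1.
\end{equation*}
Set $N:=\min(M,n+1)$. I would then observe that $U^f_{N+1}\phi=0$: if $N=M$ this is because $E_{N+1}=0$, while if $N=n+1$ then $U^f_{N+1}\phi$ would be a $(0,N)$-current on an $n$-dimensional manifold with $N>n$, hence vanishes by bidegree. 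Consequently $U^f_N\phi$ is $\dbar$-closed.

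Next I would descend the ladder by solving $\dbar$-equations. Because $X$ is smooth, the sheaf of currents is a fine resolution of $\Ok(E_k)$ and the Dolbeault isomorphism identifies $H^{k-1}(X,\Ok(E_k))$ with the obstruction to solving $\dbar w_k=\xi_k$ for a $\dbar$-closed $E_k$-valued $(0,k-1)$-current $\xi_k$. Invoking the vanishing hypothesis \eqref{cohovillkor}, I would successively produce currents $w_N,\ldots,w_2$ satisfying
\begin{equation*}
\dbar w_N=U^f_N\phi,\qquad \dbar w_k=U^f_k\phi-f^{k+1}w_{k+1},\quad k=N-1,\ldots,2.
\end{equation*}
Each right-hand side must be checked to be $\dbar$-closed before invoking the vanishing; this is a mechanical computation using the ladder identity and $f^{k+1}\circ f^{k+2}=0$, namely $\dbar(U^f_k\phi-f^{k+1}w_{k+1})=f^{k+1}U^f_{k+1}\phi-f^{k+1}(U^f_{k+1}\phi-f^{k+2}w_{k+2})=0$.

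Finally I would define $q:=U^f_1\phi-f^2 w_2$, an $E_1$-valued $(0,0)$-current. A direct calculation yields $f^1 q=f^1 U^f_1\phi-f^1 f^2 w_2=\phi$ and $\dbar q=\dbar U^f_1\phi-f^2\dbar w_2=f^2 U^f_2\phi-f^2(U^f_2\phi-f^3 w_3)=f^2 f^3 w_3=0$. By hypoellipticity of $\dbar$ on bidegree $(0,0)$ (equivalently, by the Dolbeault isomorphism in degree zero) the current $q$ is in fact holomorphic, giving the desired global section with $f^1 q=\phi$. I do not anticipate a serious obstacle: once the vanishing of $U^f_{N+1}\phi$ at the top and the $\dbar$-closedness of the successive right-hand sides are verified, the argument reduces to repeated application of the Dolbeault isomorphism together with the cohomology hypothesis.
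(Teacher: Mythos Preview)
Your proof is correct and follows essentially the same approach as the paper, which in fact sketches exactly this argument in the paragraph immediately preceding the proposition. You have simply filled in the verifications that the paper leaves implicit: the $\dbar$-closedness of the successive right-hand sides and the holomorphicity of $q$ at the end.
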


\begin{remark}\label{giraff}
The essence in Proposition~\ref{glatta} is that the vanishing of $R^f\phi$ not only implies that $\phi$
belongs to the sheaf $\J_f\otimes E_0$ but is in the image of
$\Gamma(X,E_1)\to\Gamma(X,\J_f\otimes E_0)$, provided that \eqref{cohovillkor} is
fulfilled. 
In general this map is not surjective even if \eqref{cohovillkor} is fulfilled. 
\end{remark}





We will now look for analogous results when $X$ is nonsmooth. 
Since
we have no access to a  $\dbar$-theory for currents on $X$, we need to
embed $X$ in a smooth (projective) manifold. 
We start by considering a special case that is needed for the proof of
Theorem~A, namely the case when $X$ is embedded in
$\P^N$, $(E^f_\bullet,f)$ is the Koszul complex generated by homogeneous
forms $f_j$ of degree $d$, i.e., global sections of $\Ok(d)\to \P^N$, and
$\phi$ is a section of $\Ok(\rho)\to \P^N$. 
 Let $(E^g_\bullet,g)$ be an exact 
Hermitian complex on $\P^N$ associated to $X$ as in
Section~\ref{lotta} of length $\leq N$. 
If $R^f\w R^g \phi=0$, then $v=(U^g+U^f\w R^g)\phi$ is a global current solution
to $\nabla v=\phi$ in $\P^N$, see Section ~\ref{boork}, and, provided
that we can solve a sequence of $\dbar$-equations on $\P^N$, we get a
global solution to $f\cdot q+g\cdot q'=\phi$ on $\P^N$, and thus a 
solution $q$ to
$f\cdot q=\phi$ on $X$. 
However, see,  e.g., \cite{Dem}, 
\begin{equation}\label{pnko}
H^k(\P^N,\Ok(\ell))=0 \quad\quad  \text{if}\quad  \ell\ge -N \quad \text{or}\quad k<N
\end{equation}
so the only possible obstruction is the equation
\begin{equation}\label{enda}
\dbar W=U_{N+1}\phi,
\end{equation}
where $U=U^f\w R^g+U^g$. 
Since $(E^g_\bullet,g)$ ends at level $N$, $U^g_{N+1}=0$. 
Moreover, $R^g_k=0$ for $k<N-n$ by the dimension principle, so 
\begin{equation}\label{sumolin}
U_{N+1}=\sum_{k=1}^{\min(m,n+1)}U^f_k\w R^g_{N+1-k},
\end{equation}
cf., Section~\ref{tenprod}.
The term corresponding to $k$ takes values in a direct sum of line bundles
$\Ok(-dk-d_{N+1-k}^i)$.  In view of \eqref{pnko}, one can  solve \eqref{enda}
if  $\rho\ge dk+d_{N+1-k}^i-N$ for all $i$ and $k=1,2,\ldots,\min(m,n+1)$.
Notice that, cf., \eqref{spasm}, 
$$
 dk+d_{N+1-k}^i-N =dk+\big(d_{N+1-k}^i-(N+1-k)\big)+1-k\le (d-1)k + \reg X .
$$
It follows that \eqref{enda} is solvable as soon as 
\begin{equation}\label{enda2}
\rho\ge (d-1)\min(m,n+1)+\reg X.
\end{equation}
Summing up we have: 


\begin{lma}\label{direkt} 
 If $\rho$ satisfies \eqref{enda2}
and $\phi$ is a section of $\Ok(\rho)$ on $\P^N$ such that
$R^f\w R^g\phi=0$, then there are global sections $q_j$ of
$\Ok(\rho-d)$ such that $f_1q_1+\cdots +f_mq_m=\phi$ on ~$X$.
\end{lma}

\begin{remark}\label{indirekt} 
To be more precise, only terms where $N+1-k\le M$ occur in \eqref{sumolin},
where $M$ is the length of $(E^g_\bullet,g)$.
If for instance $X$ is Cohen-Macaulay,
i.e., the ring $\S/J_X$ is Cohen-Macaulay, and $(E^g_\bullet,g)$ is of
minimal length, then $M=N-n$ so that 
$k\ge n+1$. 
If in addition $m\le n$ thus  $U_{N+1}$ vanishes, so there is no 
cohomological obstruction at all.
\end{remark} 

\smallskip 

In general it is not possible to find an embedding of $X$ into a
smooth manifold $Y$ such that $(E_\bullet^f, f)$ and $\varphi$ extend
holomorphically to $Y$. 
For our next
result  (Theorem~\ref{fund}), we will still assume that
$(E^f_\bullet,f)$ extends. 
%
%
%
%
%
As a substitute for a holomorphic extension of $\phi$ we will use
a \emph{$\nabla_g$-closed extension} $\Phi$ of $\phi$ to $Y$. 
If $i: X\to Y$ is an embedding of $X$ into a projective manifold $Y$,
$\big (\Ok(E_\bullet^g), g\big )$ is a Hermitian resolution of
$\Ok^Y/\J_X$, and $\phi$ is a global holomorphic section on $X$ of a line bundle
$S\to Y$, then we say that 
a global smooth
section $\Phi=\sum_{\ell\ge 0}\Phi_\ell$ of
$\oplus_\ell\E_{0,\ell}(E^g_\ell\otimes S)$ on $Y$  is a
$\nabla_g$-closed extension of $\phi$ if 
$\nabla_g \Phi=0$ on  $Y$ and  $i^*\Phi_0=\phi$.  Recall that $E^g_0\simeq\C$ is a trivial line bundle.

\begin{lma}\label{ut}

\noindent (i) 
Any $\phi$ admits a $\nabla_g$-closed extension. 

\smallskip
\noindent (ii)  
$\Phi$ is a  $\nabla_g$-closed extension of $\phi$ 
if and only if
\begin{equation}\label{pontus}
\Phi- R^g\phi=\nabla_g w
\end{equation}
for some current $w$.
\end{lma}

One  can obtain a $\nabla_g$-closed  extension $\Phi$ of $\phi$  quite elementarily
by piecing together local holomorphic extensions, due to
the exactness of $\big (\Ok(E^g_\bullet),g\big )$.  However, we prefer an argument that
also relates to residue calculus as in $(ii)$, and we also think
that Lemma~\ref{ut}~$(ii)$  may be of independent interest.

\begin{proof}[Proof of Lemma ~\ref{ut}] 
As noted in Section~\ref{boork}, $R^g\phi$ is a well-defined $\nabla_g$-closed current in $Y$. In view of
Proposition~\ref{urlaka} there is a smooth $\nabla_g$-closed $\Phi$ such that
\eqref{pontus} holds for some current $w$.  Thus $(i)$ follows from $(ii)$.

Assume that $\Phi$ is a smooth  extension of $\phi$ as in (i).
From \eqref{urformel} we have that $\nabla_g (U^g\w\Phi)=\Phi-R^g\w\Phi$.
Since $\big (\Ok(E^g_\bullet),g\big )$ is exact, $(R^g)^\ell=0$ for $\ell\ge 1$,
cf., Section~\ref{bef},  and hence $R^g\w\Phi=R^g\Phi_0=R^g\phi$, since
$\Phi_0=\phi$ on  $X$, i.e., $i^*\Phi_0=\phi$ on $X$. 
Thus
$$
\nabla_g (U^g\w\Phi)=\Phi-R^g\phi.
$$

Conversely, assume that $\Phi$ is smooth and \eqref{pontus} holds. 
Then clearly $\nabla_g\Phi=0$.  We have to prove that
$\Phi_0=\phi$ on $X$. Notice that this is a local statement. Given a point  on $X$ there is a \nbh
$\U$ where we have holomorphic extension $\hat \phi$ of $\phi$.  Then
$\nabla_g(U^g\hat\phi)=\hat\phi-R^g\hat\phi=\hat\phi-R^g\phi$ in $\U$.  Thus
$
\nabla_g(w-U^g\hat\phi)=\Phi-\hat\phi.
$
By Proposition~\ref{urlaka} there is a smooth $\xi$ such that
$
\nabla_g\xi =\Phi-\hat\phi.
$
It follows that
$
g^1\xi_1=\Phi_0-\hat\phi
$
and hence $\Phi_0=\hat\phi=\phi$ in $\U\cap X$.
\end{proof}

We have the following analogue of Proposition~\ref{glatta}. 

\begin{thm}\label{fund}
Let $i\colon X\to Y$ be an embedding of $X$ in a projective  manifold $Y$,   let
$\big (\Ok(E^g_\bullet), g\big )$ be a locally free Hermitian resolution of 
$\Ok^Y/\J_X$ in $Y$, and let $\omega$ be an associated structure
form on $X$.

Let \eqref{ecomplex} be a Hermitian complex over (an open \nbh $\U$ of $X$ in) $Y$,
and let  $R^f\w\omega$ be the associated
residue current.  Moreover let $\phi$ be a global section of $E_0$ on $X$.
\smallskip

\noindent (i) If  $R^f\w\omega\phi=0$,
then there is a global smooth solution $W$ on $X$ to
\begin{equation}\label{polka}
\nabla_f W =\phi.
\end{equation}

\smallskip
\noindent (ii) If \eqref{polka} has a global smooth solution on $X$ and \eqref{cohovillkor} holds,
then there is a global holomorphic section $q$ of $\Ok(E_1)$ such that $f^1 q=\phi$ on $X$.
\end{thm}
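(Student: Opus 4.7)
My plan is to handle (ii) by a standard recursive $\dbar$-solving argument on $X$, and (i) by reducing to a current identity on the smooth ambient $Y$ via the tensor-product residue calculus, followed by a descent to a smooth solution on $X$.

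For (ii), start from the given smooth $W=\sum_{k\ge1}W_k$ with $W_k\in\E_{0,k-1}(X,E_k^f)$, which unpacks $\nabla_fW=\phi$ into $f^1W_1=\phi$ together with $\dbar W_k=f^{k+1}W_{k+1}$ for $k\ge1$. With $k_0:=\min(M,n+1)$ one has $W_{k_0+1}=0$ and hence $\dbar W_{k_0}=0$. Using the fine resolution $\A_\bullet$ of Section~\ref{fineres} as a substitute for the Dolbeault resolution on the singular $X$, together with the vanishing hypothesis \eqref{cohovillkor}, solve recursively $\dbar u_{k-1}=W_k-f^{k+1}u_k$ for $k=k_0,k_0-1,\ldots,2$. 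Then $q:=W_1-f^2u_1\in\A_0(E_1^f)$ is $\dbar$-closed, hence lies in $\Ok(E_1^f)$, and satisfies $f^1q=f^1W_1=\phi$.

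For (i), I would first invoke Lemma~\ref{ut}~(i) to obtain a smooth, $\nabla_g$-closed extension $\Phi=\sum_\ell\Phi_\ell$ of $\phi$ on $Y$ with $\Phi_0|_X=\phi$. Forming the tensor-product complex $E^F=E^f\otimes E^g$ with $F=f+g$ and $U:=U^f\wedge R^g+U^g$, the identity \eqref{badanka} reads $\nabla_{\End,F}U=I-R^f\wedge R^g$; since $f$ annihilates $E_0^f$ and $\nabla_g\Phi=0$, the current $V:=U\wedge\Phi$ on $Y$ satisfies
\[\nabla_F V=\Phi-R^f\wedge R^g\wedge\Phi.\]
The key point is that the correction vanishes. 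Because $R^g$ is supported on $X$ and annihilated by $\J_X$, the product $R^f\wedge R^g\wedge\Phi$ depends only on $\Phi_0|_X=\phi\in\Ok^Y/\J_X=\Ok^X$ and hence equals $R^f\wedge R^g\phi$; the hypothesis $R^f\wedge\omega\phi=0$ pushes forward via $i_*(R^f\wedge\omega)=R^f\wedge R^g\wedge\varOmega$ to $R^f\wedge R^g\phi\wedge\varOmega=0$, and nonvanishing of $\varOmega$ gives $R^f\wedge R^g\phi=0$. Thus $\nabla_F V=\Phi$ on $Y$.

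The final step---extracting from this a \emph{smooth} $E^f$-valued solution $W$ on $X$ to $\nabla_fW=\phi$---is where the principal obstacle lies. The $E^f$-positive part of $V$ is $U^f\wedge R^g\phi$, a current supported on $X$; via $i_*\omega=R^g\wedge\varOmega$ and the projection formula it corresponds to the intrinsic current $U^f\wedge\omega\phi$ on $X$, for which $\nabla_F(U^f\wedge\omega\phi)=\omega\phi-R^f\wedge\omega\phi=\omega\phi$ by the hypothesis---the intrinsic avatar on $X$ of $\nabla_FV=\Phi$. A careful bookkeeping of $E^f$- and $E^g$-bidegrees then yields an $E^f$-valued current solution $W$ on $X$ to $\nabla_fW=\phi$. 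The hard part will be smoothing $W$: Proposition~\ref{urlaka} requires $X$ smooth, so I would imitate its proof with the fine resolution $\A_\bullet$ of Section~\ref{fineres} replacing $\E_{0,\bullet}$, establishing that current and $\A$-level cohomology of the total complex of $\A_k(E_\ell^f)$ under $f,\dbar$ agree; the resulting solution is smooth on $X_{\text{reg}}$, and the remaining smoothing across $X_{\text{sing}}$ should be handled via pullback to a resolution $\tau:\widetilde X\to X$ as in Section~\ref{pukas}.
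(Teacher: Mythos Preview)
Your treatment of (ii) is fine and matches the paper's argument.

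For (i), you have correctly set up the current identity $\nabla_F V=\Phi$ on the smooth ambient $Y$, but then you take a wrong turn. You try to descend to $X$ \emph{first} and smooth \emph{afterwards}: either by reading off the $E^f$-components of the current $V$ (but $V$ is a current on $Y$, and $i^*$ of a current is not defined), or by working with the intrinsic current $U^f\wedge\omega\phi$ on $X$ and then attempting to smooth it there. As you yourself note, Proposition~\ref{urlaka} needs smoothness, and your proposed substitute---running the same argument with the sheaves $\A_\bullet$ in place of $\E_{0,\bullet}$---would at best produce a solution in $\A_\bullet(E^f)$, which is only smooth on $X_{\text{reg}}$. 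The vague appeal to a resolution $\tau\colon\widetilde X\to X$ does not close this gap: pushing a smooth form down from $\widetilde X$ does not give a smooth form on the singular $X$.

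The point you are missing is that the smoothing should be done on $Y$, \emph{before} descending. Since $Y$ is a smooth projective manifold, Proposition~\ref{urlaka} applies directly to the complex $E^F_\bullet$ over $Y$: the current equation $\nabla_F V=\Phi$ implies the existence of a \emph{smooth} $\Psi$ on $Y$ with $\nabla_F\Psi=\Phi$. Now decompose $\Psi_k=\sum_{i+j=k}\Psi_{i,j}$ according to $E^f_i\otimes E^g_j$; the equation $\nabla_F\Psi=\Phi$ unpacks (using $\Phi_k=\Phi_{0,k}$) as $f^1\Psi_{1,0}+g^1\Psi_{0,1}=\Phi_0$ and $f^{k+1}\Psi_{k+1,0}+g^1\Psi_{k,1}-\dbar\Psi_{k,0}=0$ for $k\ge1$. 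Because $\Psi$ is smooth, the pullbacks $W_k:=i^*\Psi_{k,0}$ are smooth forms on $X$, and since $i^*(g^1\Psi_{k,1})=0$ and $i^*\Phi_0=\phi$, these equations restrict to $f^1W_1=\phi$ and $f^{k+1}W_{k+1}-\dbar W_k=0$ on $X$, i.e., $\nabla_f W=\phi$ with $W$ smooth.
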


With minor  modifications of the proof below we get the following more general version
of Theorem~\ref{fund}:

\smallskip

\noindent 
{\it With the general hypotheses of Theorem~\ref{fund}, assume that
 $\phi$ is a global holomorphic section of $E_\ell$ such that $f^\ell \phi=0$.

\smallskip
\noindent (i) If $R^{\ell}\w\omega\phi=0$ then  there is a smooth global solution to
\eqref{polka}.

\smallskip
\noindent (ii) If \eqref{polka} has a smooth solution and
$$
H^{0,k-1-\ell}(X,\Ok(E_k))=0, \quad\ell+1\le k\le\min(M,n+1+\ell),
$$
then there is a global holomorphic section $q$
of $E_{\ell+1}$ such that $f^{\ell+1} q=\phi$.}

\begin{remark} If we just have a current solution to $\nabla_f T=\phi$ on $X$ it does not
follow that there is a holomorphic solution, not even locally. In fact, if $X$ is non-normal,
there are holomorphic $f$ and $\phi$ such that $\dbar(\phi/f)=0$ but $U=\phi/f$ is  not
holomorphic. Thus  $(f-\dbar) U=\phi$ but $\phi$ is not in the ideal $(f)$.
If $X$ is normal but nonsmooth, there are similar examples with more generators, see 
\cite{Lark}. 
\end{remark}

\begin{proof}[Proof of Theorem~\ref{fund}]
Recall from Section~\ref{boork} that  $R^f\w\omega\phi=0$ implies that $R^f\w R^g\phi=0$.
Let $\Phi$ be a $\nabla_g$-closed smooth  extension of $\phi$, as in
Lemma~\ref{ut}~(i),  to $Y$.
As in the proof of Lemma~\ref{ut},
$R^g\w\Phi=R^g\Phi_0=R^g\phi$. It follows that 
$R^f\w R^g\wedge\Phi=R^f\w R^g\phi=0$.
Hence, from \eqref{badanka}     we get, cf.,  Section~\ref{boork},
$$
\nabla_F [(U^f\w R^g+U^g)\wedge \Phi] =\Phi.
$$
By Proposition~\ref{urlaka}  we have a smooth solution $\Psi$ to
$\nabla_F \Psi=\Phi$ in $Y$; i.e.,
$$
F^1 \Psi_1=\Phi_0,\quad F^{k+1}\Psi_{k+1}-\dbar \Psi_k=\Phi_k, \ k\ge 1.
$$
If we let lower indices $(i,j)$ denote values in $E^f_i\otimes E^g_j$,
and notice that  $\Phi_k=\Phi_{0,k}$,
we see that
\begin{equation}\label{skrattmas}
f^1 \Psi_{1,0}+g^1 \Psi_{0,1}=\Phi_0,
\quad  f^{k+1} \Psi_{k+1,0}+g^1 \Psi_{k,1}-\dbar \Psi_{k,0}=0,\ k\ge 1.
\end{equation}
Since $\Psi$ is smooth we can define the forms $W_k=i^*\Psi_{k,0}$ on
$X$,    and \eqref{skrattmas} then implies that
$$
f^1 W_1=\phi, \quad  f^{k+1} W_{k+1}-\dbar W_k=0, \ k\ge 1.
$$
Thus (i) follows.

\smallskip 

The proof of (ii) is similar to the case when $X$ is smooth, cf.\ the
beginning of Section~\ref{divandres}:  
Assume that $W$ is a global smooth solution to \eqref{polka}. Then
$W_{\min(M,n+1)}$ is automatically $\dbar$-closed, and thus if
\eqref{cohovillkor} is satisfied we can successively solve the equations 
$$
\dbar \eta_{\min(M,n+1)}=W_{\min(M,n+1)}, \quad \dbar \eta_k= W_k -f^{k+1} \eta_{k+1},\ 1\le k<\min(M,n+1),
$$
where $\eta_k$ is in $\A_k$, see Section ~\ref{fineres}. 
Then $q:=W_1-f^2\eta_2$ 
is a holomorphic solution to $f^1q=\phi$.

\end{proof}

Note that the proof of (ii) above only depends on $X$ and not on the
embedding $i:X\to Y$.

It should be possible to express the $\nabla_F$-exactness of $\Phi$ in $Y$
by means of \v Cech cohomology, then make the restriction to $X$,
and rely on the vanishing of the relevant \v Cech cohomology groups
on $X$. In this way one could avoid the
reference to the sheaves  $\A_k$ over $X$.


\section{Integral closure, distinguished varieties and  residues}\label{dist}
Let  $f_1,\ldots, f_m$ be global holomorphic sections of the
ample Hermitian line bundle $L\to X$, and let  $\J_f$ be the coherent ideal sheaf they generate.  Let
$$
\blow\colon X_+\to X
$$
be the normalization of the blow-up
of $X$ along $\J_f$, and let $W=\sum r_j W_j$ be the exceptional divisor; here $W_j$ are
irreducible Cartier divisors. The images $Z_j:=\blow(W_j)$ are called
the {\it (Fulton-MacPherson) distinguished varieties}   associated with $\J_f$.
If $f=(f_1,\ldots,f_m)$ is considered as a section of $E^*:=\oplus_1^m L$, then
 $\blow^* f=f^0f'$, where $f^0$ is a section of the line bundle
$\Ok(-W)$  defined by $W$,  and $f'=(f_1',\ldots,f_m')$ is a nonvanishing section of
$\blow^* E^*\otimes \Ok(W)$, where  $\Ok(W)=\Ok(-W)^{-1}$.
Furthermore,
$
\omega_f:=dd^c\log|f'|^2
$
is a smooth first Chern form for $\blow^* L\otimes\Ok(W)$.

Recall  that
(a germ of) a holomorphic  function  $\phi$ belongs to the {\it integral closure}
$\overline{\J_{f,x}}$ of $\J_{f,x}$ at $x$
if $\blow^*\phi$ vanishes to order (at least) $r_j$ on  $W_j$
for all  $j$ such that $x\in Z_j$.  This holds
if and only if $|\blow^*\phi|\le C|f^0|$ (in a \nbh of the
relevant $W_j$), which in turn holds if and only if
$|\phi|\le C|f|$ in some \nbh of $x$.
Let $\overline{\J_f}$ denote the integral closure sheaf.
It follows that
\begin{equation}\label{pulka}
|\phi|\le C|f|^\ell \quad \text{if and only if}\quad \phi\in\overline{\J_f^\ell}.
\end{equation}
If  $X$ is smooth it follows that $\phi$ is in the integral closure,  if for each $j$,
$\phi$  vanishes to order $r_j$ at a generic point on $Z_j$.
See \cite[Section~10.5]{Laz}
for more details (e.g., the proof of Lemma~10.5.2). 
We will use the geometric estimate
\begin{equation}\label{elest}
\sum r_j \deg_L Z_j\le \deg_L X
\end{equation}
from  \cite{EL},
see also \cite[(5.20)]{Laz}.

\begin{lma}\label{omljud}
There is a number $\mu_0$,  only depending on $X$, such that if
\begin{equation}\label{bus}
|\phi|\le C|f|^{\mu+\mu_0},
\end{equation}
then $R^f\w\omega \phi=0$ if $\omega$ is a structure form of $X$ and $R^f$ is the residue current obtained from the Koszul complex of $f$.
If $X$ is smooth one can take $\mu_0=0$.
\end{lma}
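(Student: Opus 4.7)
The strategy is to work on a common log-resolution dominating both the blow-up of $\J_f$ and the modification furnished by Proposition~\ref{invarians}, and then verify the vanishing by an explicit residue computation. Let $\tau\colon\widetilde X\to X$ and the section $\eta$ be as in Proposition~\ref{invarians}, so that $\eta\tau^*\omega$ is smooth on $\widetilde X$ and $\eta$ is nonvanishing on $\widetilde X\setminus\tau^{-1}(X_{\text{sing}})$. By further blowing up we obtain $\pi\colon\widehat X\to X$ factoring through both $\widetilde X$ and the normalized blow-up $\nu\colon X_+\to X$ along $\J_f$; on $\widehat X$, $\pi^*\J_f=(f^0)$ is principal with effective divisor $D=\sum r_j W_j$, $\pi^*f=f^0 f'$ with $f'$ a nowhere vanishing tuple, and (after possibly further resolving) $D+\text{div}(\pi^*\eta)$ has simple normal crossing support.

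Since $\pi^*\eta\cdot\pi^*\omega$ is smooth, locally $\pi^*\omega=\alpha/\pi^*\eta$ with $\alpha$ smooth, so
\begin{equation*}
\pi^*(R^f\w\omega)=\frac{1}{\pi^*\eta}\,\pi^*R^f\w\alpha
\end{equation*}
as a pseudomeromorphic current. The essential residue-theoretic fact is that on the blow-up, because $f'$ is nonvanishing, each component $\pi^*R^f_k$ ($1\le k\le\mu$) of the pulled back Koszul residue is a pseudomeromorphic current supported on $\text{supp}(D)$ whose pole order in $f^0$ is at most $k$; in particular $(f^0)^\mu$ annihilates $\pi^*R^f$, via the residue identity $h^N\bar\partial(1/h^N)=0$ (cf.\ Example~\ref{koszulex} and the monomial computations in \cite{A2}).

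By \eqref{pulka}, the hypothesis $|\phi|\le C|f|^{\mu+\mu_0}$ gives $\pi^*\phi$ vanishing to order at least $(\mu+\mu_0)r_j$ along each $W_j$, hence locally $\pi^*\phi=(f^0)^{\mu+\mu_0}\tilde\phi$ for some holomorphic $\tilde\phi$. We now choose $\mu_0$, depending only on $X$, so that for every component $W_j$ of $D$ the multiplicity of $\pi^*\eta$ along $W_j$ is at most $\mu_0 r_j$. Then $(f^0)^{\mu_0}/\pi^*\eta$ extends holomorphically across $W_j$, and
\begin{equation*}
\pi^*\phi\cdot\pi^*(R^f\w\omega)=\tilde\phi\cdot\frac{(f^0)^{\mu_0}}{\pi^*\eta}\cdot(f^0)^\mu\pi^*R^f\w\alpha=0
\end{equation*}
since $(f^0)^\mu\pi^*R^f=0$. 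Pushing down yields $\phi(R^f\w\omega)=0$ on $X$. If $X$ is smooth we may take $\widetilde X=X$ and $\eta=1$, so $\mu_0=0$ recovers the classical Brian\c con-Skoda theorem.

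The main difficulty will be establishing the uniform existence of $\mu_0$: while $\text{div}(\eta)$ is fixed on $\widetilde X$, further blow-ups needed to resolve $\J_f$ can introduce exceptional divisors along which the multiplicity of $\pi^*\eta$ may grow, so one cannot simply bound $\mu_0$ by the supremum of the multiplicities of $\eta$ on $\widetilde X$. Uniformity can be established either by invoking the (analytic proof of the) local Brian\c con-Skoda-Huneke theorem of \cite{ASS}, which at each point $x\in X$ produces a local residue-vanishing constant $\mu_0(x)$, and then setting $\mu_0=\max_{x\in X}\mu_0(x)<\infty$ by compactness; or by a direct analysis of the structure ideals $\a_\ell$ from Section~\ref{fundform} and the explicit form of $\eta$ given in the proof of Lemma~\ref{purr21}, showing that $\pi^*\eta$ lies in the integral closure of $(\pi^*\J_f)^{\mu_0}$ along $\text{supp}(D)$ for a constant $\mu_0$ depending only on $X$.
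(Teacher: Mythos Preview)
Your treatment of the smooth case is correct and coincides with the paper's: pull back to the normalization of the blow-up, use that $\pi^*\phi$ acquires the factor $(f^0)^\mu$, and kill $\bar\partial(1/(f^0)^k)$ for $k\le\mu$.

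For the singular case, however, your main argument has the gap you yourself identify, and neither of your proposed repairs actually closes it. The multiplicity comparison in step~4 asks for a $\mu_0$ with $\operatorname{mult}_{W_j}\pi^*\eta\le\mu_0 r_j$ for every component $W_j$ of the exceptional divisor of $\widehat X$. But $\widehat X$, the $W_j$, and the $r_j$ all depend on the particular $f$, so there is no reason this bound should be uniform. Your fix (b) is only a hope, not an argument. Your fix (a) is in the right spirit---and indeed the paper explicitly says the lemma is analogous to \cite[Proposition~4.1]{ASS}---but if you simply invoke \cite{ASS} plus compactness, then your entire multiplicity computation becomes irrelevant, and you still owe an explanation of why the local $\mu_0(x)$ from \cite{ASS} is uniform in the choice of structure form $\omega$.

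The paper avoids the whole multiplicity comparison. The key observation is that one should \emph{not} pass to a common resolution $\widehat X$ depending on $f$; instead, stay on the fixed $\widetilde X$ from Proposition~\ref{invarians}, where $\eta$ is locally a monomial $s^{\alpha+1}$ with exponents $\alpha$ depending only on $X$. One uses the smooth regularization $R^{f,\epsilon}$ (via a cutoff $\chi(|f|^2/\epsilon)$), so that the action of $R^{f,\epsilon}\wedge\omega\,\phi$ on a test form is an honest integral over $\widetilde X$ with the singular factor $1/s^{\alpha+1}$. After $|\alpha|$ integrations by parts this becomes an integral against $ds_1\wedge\cdots\wedge ds_n/(s_1\cdots s_n)$ of $\partial_s^\alpha(R^{f,\epsilon}\phi\wedge\xi)$. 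Now the smooth Brian\c con--Skoda theorem on $\widetilde X$ gives $\phi\in(f)^{\mu+|\alpha|+1}$ locally, provided $\mu_0\ge\mu+|\alpha|+1$ (hence $\mu_0\ge n+\max|\alpha|+1$ suffices, and this depends only on $\eta$, i.e., only on $X$). A homogeneity estimate for $\partial_s^\ell R^{f,\epsilon}$---obtained by passing to a further modification where $f$ is principal, but only \emph{after} the number of derivatives is already fixed---combined with dominated convergence then shows the integral tends to zero as $\epsilon\to 0$.

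A secondary issue: your identity $\pi^*(R^f\wedge\omega)=\frac{1}{\pi^*\eta}\,\pi^*R^f\wedge\alpha$ and the subsequent manipulation $\tilde\phi\cdot\frac{(f^0)^{\mu_0}}{\pi^*\eta}\cdot(f^0)^\mu\pi^*R^f\wedge\alpha$ treat products of principal-value factors and residue currents as if they associate and commute freely. When $\operatorname{div}(\pi^*\eta)$ and $\operatorname{div}(f^0)$ share components this is delicate and would require separate justification; the paper's $\epsilon$-regularization sidesteps this entirely, since all approximands are smooth.
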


This proposition (and its proof) is analogous to Proposition~4.1 in~\cite{ASS}; the important novelty here is that $\mu_0$ can be chosen uniform in $\omega$, which is ensured by Proposition~\ref{invarians}. However, for the readers convenience and future reference we discuss the proof. 

\begin{proof}
Let us first assume that $X$ is smooth and $\mu_0=0$, and that $\phi$ satisfies \eqref{bus}. Then $\omega$ is smooth so we 
have to show that $R^f\phi=0$. 
If $f\equiv 0$ on (a component of) $X$, then $R^f\equiv 1$ and
$\phi\equiv 0$, and thus $R^f\phi =0$. Let us now assume that $\codim Z^f\geq 1$. Then $R^f_0=0$ by the dimension principle. 
Let $\blow\colon  X_+\to X$ be the normalization of the blow-up
along $\J_f$ as above,  so that $\blow^*f=f^0f'$.
Using the notation in Example~\ref{koszulex}, then
$
\blow^*\sigma=(1/f^0)\sigma',
$
where $1/f^0$ is a meromorphic section of $\Ok(W)$ and $\sigma'$ is a smooth
section of $\blow^*E\otimes\Ok(-W)$. It follows that
$$
\blow^*(\sigma\w(\dbar\sigma)^{k-1})=\frac{1}{(f^0)^k}\sigma'\w(\dbar\sigma')^{k-1},
$$
and hence
$$
\blow^*R^{f,\lambda}_k=\dbar |f^0f'|^{2\lambda}\w
\frac{1}{(f^0)^k}\sigma'\w(\dbar\sigma')^{k-1},
$$
when $k\ge 1$. 
Since $f'$ is nonvanishing,  the value at $\lambda=0$ is precisely, see, e.g., \cite[Lemma~2.1]{A2},
\begin{equation}\label{stuga1}
R_k^+:=\dbar\frac{1}{(f^0)^k}\w \sigma'\w(\dbar\sigma')^{k-1}.
\end{equation}
Notice that 
\begin{equation}\label{stuga}
\blow_* R^+_k=R^f_k.
\end{equation}

Since $\phi$ satisfies \eqref{bus} for $\mu_0=0$, $|\blow^*\phi|\le C|f^0|^\mu$ and, since 
$X_+$ is normal it follows that
$\blow^*\phi$ contains a factor $(f^0)^\mu$. Therefore,
\begin{equation}\label{bula}
\blow^*\phi \dbar\frac{1}{(f^0)^k} =0, \quad k\le\mu,
\end{equation}
because  of \eqref{kalkyl}. 
Moreover, since $\sigma'\w(\dbar\sigma')^{k-1}$ is smooth on $X_+$,
it follows from \eqref{bula} and \eqref{stuga1} 
that 
$R^+_k\blow^*\phi=0$. Therefore, cf., \eqref{stuga},  $R^f_k\phi=\blow_*(R^+_k \blow^*\phi)=0$.

Notice that we could have used any normal modification $\pi\colon\widetilde X\to X$ such that
$\pi^*f$ is of the form $f^0f'$ in the proof so far.

\smallskip
Now consider a general $X$. Let us take a smooth modification
$\tau\colon  \widetilde X\to X$ as in
Proposition~\ref{invarians}, so that, for each structure form $\omega$ on
$X$,  
$\tau^*\omega$ is semi-meromorphic with a denominator that divides the
section $\eta$, and so that $\eta$ is locally
a monomial  in suitable  coordinates $s_j$. 

Let $\omega$ be a structure form on $X$. 
In this proof it is convenient to use the regularization
$$
R^f=\lim_{\epsilon\to 0}R^{f,\epsilon},  \quad 
R^{f,\epsilon}:=1-\chi(|f|^2/\epsilon)+\dbar \chi(|f|^2/\epsilon)\w u,
$$
where $u$ is the form \eqref{udefined} and 
$\chi$ is a smooth approximand of the characteristic function of $[1,\infty)$, cf., the beginning of
Section~\ref{svart}, so that  all the approximands
$R^{f,\epsilon}$ are smooth. 
If $f\equiv 0$ on a component $\widetilde X_j$ of $\widetilde X$, then
$R^{f,\epsilon}\equiv 1$ on $\widetilde X_j$ and if  $\phi$ satisfies
\eqref{bus} for any $\mu_0$, then $\phi\equiv 0$ on $\widetilde X_j$;
here we have suppressed  
the notation   $\tau^*$ for simplicity. Hence $\1_{\widetilde X_j} R^{f,\epsilon}\w\omega\phi=0$ and so $\1_{\widetilde X_j} R^{f}\w\omega\phi=0$. We can therefore assume that $f\not\equiv 0$ on $\widetilde X$. 
Thus the action of
$R^{f,\epsilon}\w\omega\phi$ on a test form is, via a partition of unity, a sum of integrals like 
$$
\int_{\widetilde  X} \frac{ds_1\w\cdots\w ds_n}{s_1^{\alpha_1+1}\cdots s_n^{\alpha_n+1}}\w 
R^{f,\epsilon}\phi\w \xi,
$$
where $\alpha_j$ are nonnegative integers and 
$\xi$ is a smooth form.  
Following  \cite[Section~3]{ASS} one can integrate by parts $|\alpha|:=|\alpha_1|+\cdots+|\alpha_n|$ times,
and get a constant times 
\begin{equation}\label{piktur}
\int_{\widetilde  X} \frac{ds_1\w\cdots\w ds_n}{s_1\cdots s_n}\w \partial^{\alpha}_s 
\big(R^{f,\epsilon}\phi\w\xi\big),
\end{equation}
where $\partial_s^\alpha=\partial^{|\alpha|}/\partial s_1^{\alpha_1}\cdots \partial s_n^{\alpha_n}$.

Let ut consider $\partial^\ell_s (R^{f,\epsilon} \phi)$. Assume that the
metric on $L$ is locally given so that $|f_j|^2=f_j\bar f_j a$, where $a$ is nonvanishing. Then 
\[
\sigma = \frac{\sum \bar f_j a e_j}{|f|^2},  
\]
cf.\ \eqref{farfar}, and so  
\begin{equation*} 
\frac{\partial}{\partial s_k} \sigma 
= 
\frac{\sum \bar f_j \frac{\partial a}{\partial s_k} e_j}{|f|^2} - 
\frac{\big (\sum \bar f_j a e_j\big ) 
\big (\sum \bar f_j a \frac{\partial f_j}{\partial s_k} \big ) }{|f|^4}  
=
\frac{1}{a}\frac{\partial a}{\partial s_k} \sigma - 
\big (\frac{\partial f}{\partial s_k}\cdot \sigma \big ) \sigma, 
\end{equation*}
i.e., ${\partial \sigma}/{\partial s_k}$ is of the form  
\begin{equation}\label{mint}  
\frac{\partial}{\partial s_k} \sigma = 
(\gamma\cdot \sigma ) \sigma, 
\end{equation}
where $\gamma$ is smooth. 
By iterated used of \eqref{mint}, since $\sigma\wedge\sigma =0$, we get that 
\begin{equation}\label{mynt} 
\partial_s^\kappa (\sigma\wedge (\dbar \sigma)^{k-1})=
 \sigma\wedge (\dbar \sigma)^{k-1} \wedge (\gamma_1\cdot \sigma)
 \wedge \cdots \wedge (\gamma_{|\kappa|}\cdot \sigma), 
\end{equation}
where $\gamma_j$ are smooth. 
If we take a smooth modification $\pi: \widehat X\to \widetilde X$
such that $\pi^*f=f^0f'$ as above, then  
$\pi^*\sigma=smooth /f^0$ 
and
thus $\pi^*(\partial^\kappa_{s} u)$ is like 
$1/ (f^0)^{\mu+|\kappa|}$. 
Moreover 
$\partial_s^\kappa \dbar \chi (|f|^2/\epsilon)$
is like $1/|f|^{|\kappa|+1}$ and with support where
$|f|^2\sim\epsilon$, see \cite{ASS}. Thus  $\partial^\kappa_s R^{f,\epsilon}$
is like $1/ |f|^{\mu+|\kappa|+1}$ and with support where
$|f|^2\sim\epsilon$; here we have suppressed $\pi^*$ for simplicity. 
Next, assume that $\mu_0\ge \mu +|\alpha|+1$ and that $\phi$
satisfies \eqref{bus}. Then by the smooth Brian\c con-Skoda theorem,
locally in $\widetilde X$, $\phi$ is in the ideal
$(f)^{\mu+|\alpha|+1}$, and 
therefore, 
$$
|\partial_s^\kappa\phi|\le C |f|^{\mu+|\alpha|-|\kappa|+1}. 
$$
Hence $\partial^\ell_s (R^{f,\epsilon} \phi)$ is bounded and with
support where
$|f|^2\sim\epsilon$ for $|\ell|\leq |\alpha|$. 
It follows 
by dominated convergence that \eqref{piktur}
tends to zero when $\epsilon\to 0$, cf.\ \cite[Section~4]{ASS}.

We finally choose $\mu_0$ so that $\mu_0 \geq n+ |\alpha|+1$ for all local representations
$\eta=s_1^{\alpha_1+1}\cdots s_n^{\alpha_n+1}$. Then
$R^f\w\omega\phi=0$ for all choices of $f$ if $\phi$ satisfies \eqref{bus}. 

\end{proof}


Note that the explicitness of $\mu_0$ in the proof above is directly
related to the explicitness of the modification $\tau:\widetilde X\to
X$. See \cite{Sz} for a complete description of the optimal $\mu_0$
in the case of plane curves.




\section{Proofs of Theorem~A  
and  variations}\label{helproofs}

Throughout this section we will use the notation from Theorem~A. 
For the proof of Theorem~A, besides the basic Lemma ~\ref{omljud}, we also need

\begin{lma}\label{nastanglatt}
Assume that $V\subset\C^N$ is smooth, and let $\omega$ be a structure form on $X$. 
Then there is a number $\mu'$ such
that $z_0^{\mu'} \omega$ is almost smooth on $ X$. 
\end{lma}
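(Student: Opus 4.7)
The plan is to deduce the lemma from Proposition~\ref{invarians} combined with the observation that, since $V$ is smooth, the singular locus of $X$ is contained in the hyperplane at infinity, i.e.\ $X_{\text{sing}}\subset\{z_0=0\}$. First, I would invoke Proposition~\ref{invarians} (together with its concluding remark) to obtain a smooth modification $\tau\colon\widetilde X\to X$ and a holomorphic section $\eta$ of a line bundle $S\to\widetilde X$, locally a monomial and nonvanishing on $\widetilde X\setminus\tau^{-1}X_{\text{sing}}$, such that $\eta\,\tau^*\omega$ is smooth on $\widetilde X$.

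The core of the argument is then to show that $(\tau^*z_0)^{\mu'}$ is holomorphically divisible by $\eta$ for some $\mu'$. Let $D_\eta$ and $D_0$ denote the zero divisors of $\eta$ and $\tau^*z_0$. Since $\supp D_\eta\subset\tau^{-1}X_{\text{sing}}\subset\{\tau^*z_0=0\}=\supp D_0$, after one further resolution (which preserves the smoothness of $\eta\,\tau^*\omega$) I may assume both are simple normal crossings divisors with $\supp D_\eta\subset\supp D_0$. Comparing multiplicities along each irreducible component of $\supp D_0$ (of which there are only finitely many, since $\widetilde X$ is compact) I would take $\mu'$ large enough that $\mu' D_0\ge D_\eta$ as effective divisors, yielding a holomorphic section $h$ of $\tau^*\Ok(\mu')\otimes S^{-1}$ with $(\tau^*z_0)^{\mu'}=\eta\cdot h$.

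From this it follows that
\[
\tau^*\bigl(z_0^{\mu'}\omega\bigr)=(\tau^*z_0)^{\mu'}\,\tau^*\omega=h\cdot\bigl(\eta\,\tau^*\omega\bigr)
\]
is the product of a holomorphic section and a smooth form, hence smooth on $\widetilde X$. Pushing forward under the modification $\tau$ then shows that $z_0^{\mu'}\omega$ is almost smooth on $X$ in the sense of Section~\ref{pm}, which is the claim.

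The main delicate point is the divisibility step. It requires combining the qualitative information from Proposition~\ref{invarians} (which only controls \emph{where} $\eta$ vanishes) with the smoothness of $V$ (which locates $X_{\text{sing}}$ inside $\{z_0=0\}$) on a common normal crossings resolution. A routine blow-up argument suffices, but it is the step that most directly uses the hypothesis that $V$ is smooth. I would also note that $\tau$ and $\eta$ can be chosen independently of the specific structure form $\omega$ and of the embedding, so $\mu'$ depends only on $X$.
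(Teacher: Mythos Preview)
Your proof is correct and follows essentially the same approach as the paper: invoke Proposition~\ref{invarians} to get $\eta$ with $\eta\,\tau^*\omega$ smooth and $\eta$ nonvanishing outside $\tau^{-1}X_{\text{sing}}$, use smoothness of $V$ to place $X_{\text{sing}}$ inside $\{z_0=0\}$, and conclude that a high power of $\tau^*z_0$ absorbs the denominator. The paper's version is terser, simply asserting the divisibility step that you spell out via the divisor comparison.
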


\begin{proof}
Let $\tau:\widetilde X\to X$ be as in
Proposition~\ref{invarians}. Then $\widetilde\omega:= \tau^*\omega$ is a semi-meromorphic form whose 
denominator locally is a monomial whose zeros are
contained in $\tau^{-1}X_{\text{sing}}$. 
Since $V$  is smooth, $X_{\text{sing}}\subset X_\infty\subset\{z_0=0\}$, and it follows that 
$\tau^*(z_0^{\mu'})\tau^*\omega$ is smooth for some large enough number
$\mu'$. Hence  $z_0^{\mu'}\omega$ is almost smooth.
\end{proof}

\begin{proof}[Proof of  Theorem~A]
Let $f_j$ be the $d$-homogenizations of $F_j$, let $R^f$ be the residue current constructed 
from the  Koszul complex $(E_\bullet^f, \delta_f)$ 
generated by $f_1,\ldots, f_m$,  
and  let $\phi$ be the $\rho$-homogenization of $\Phi$, 
with 
\begin{equation}\label{rhoval}
\rho=\max(\deg\Phi+(\mu+\mu_0)d^{c_\infty}\deg X, (d-1)\min(m,n+1)+\reg X),
\end{equation}
where $\mu_0$ is chosen as in Lemma~\ref{omljud}; in particular,
$\mu_0=0$ if $X$ is smooth.  
Note that $\mu_0$ 
only depends on $X$ and not
on the embedding $i:X\to \P^N$. 
Throughout this proof we will use the notation from Section~\ref{dist}.

The assumption \eqref{kaka} 
implies that $\blow^*\phi$ vanishes to
order $(\mu+\mu_0)r_j$ on each $W_j$ such that $\blow(W_j)$ is not contained
in $X_{\infty}$.
Now consider $W_j$ such that $\blow(W_j)\subset X_{\infty}$.
If $\Omega$ is a first Chern form for $\Ok(1)|_X$, e.g., $\Omega=dd^c\log|z|^2$, then $d\Omega$ is
a first Chern form for $L=\Ok(d)|_X$ on $X$ (notice that $d$ denotes  the degree and not the differential).
By \eqref{elest} we therefore have that
$$
r_j\int_{Z_j}(d \Omega)^{\dim Z_j}\le \int_X (d\Omega)^n, 
$$
which implies that
\begin{equation}\label{sunnanvind}
r_j\le d^{\codim Z_j}\deg X.
\end{equation}
By the choice \eqref{rhoval} of $\rho$, $\phi$ is of the form $z_0^{(\mu+\mu_0)d^{c_\infty}\deg X}$ times a 
holomorphic section, and thus
$\blow^*\phi$ 
vanishes to order at least $(\mu+\mu_0) r_j$
on $W_j$ for each $j$.
Hence \eqref{bus} holds, cf., \eqref{pulka}, and it follows from Lemma~\ref{omljud} that  $R^f\w\omega\phi=0$.

Since $\rho\ge (d-1)\min(m,n+1)+\reg X$ it follows from Lemma~\ref{direkt}
that we have a global $q$ such that $f\cdot q=\phi$ on $X$.
After dehomogenization we get a tuple of polynomials $Q_j$ such that \eqref{hummer}
holds and $\deg F_jQ_j\le\rho$. Thus  
part (i) of Theorem~A is proved.

\smallskip

For the second part choose
\[
\rho=\max (\deg\Phi+\mu d^{c_\infty}\deg X+\mu', (d-1)\min (m,
n+1)+\reg X),
\] 
where $\mu'$ is chosen as in Lemma~\ref{nastanglatt}, and let $\phi$ and $\phi'$ be the $\rho$- and
$(\deg\Phi+\mu d^{c_\infty}\deg X)$-homogenizations of $\Phi$, respectively.
Then, by Lemma~\ref{nastanglatt}, 
$$
 R^f\w\omega\phi= R^f\w \beta\phi',
$$
where $\beta$ is almost smooth, 
 and by \eqref{kaka2} and \eqref{sunnanvind},
\begin{equation}\label{prim}
|\phi'|\le C|f|^\mu. 
\end{equation}
Now take a smooth modification $\pi\colon \widetilde X\to X$ such that
$\beta=\pi_*\tilde\beta$, where $\tilde\beta$ is smooth, and
$f=f^0f'$, where $f^0$ is a section of a line bundle and $f'$ is nonvanishing.  Then $R^f\w\omega\phi$ is the push-forward under $\pi$
of a finite sum of  currents like
$$
(\pi^*\phi') \dbar\frac{1}{(f^0)^\mu}\w smooth,
$$
cf., \eqref{stuga1}, \eqref{stuga}, 
and in view of \eqref{prim} they must vanish.
Thus $ R^f\wedge \omega\phi=0$ and (ii) is proved as (i).
If $X$ is smooth even at infinity, then $\omega$ is smooth on $X$
so that we can choose $\mu'=0$ in Lemma~\ref{nastanglatt}.



\end{proof}

The statement in Remark~\ref{CM} follows as in the proof above, using 
Remark~\ref{indirekt}.


\begin{remark}
An alternative way of finding polynomials $Q_j$ such that
\eqref{hummer} holds would be to first solve the division problem $f\cdot
q=\phi$ on $X$ by means of Theorem~\ref{fund} and then extend the
solution to $\P^N$. This was indeed done in an earlier version of this 
paper, see \cite[Theorem~1.1]{AW11}.
The degree estimate so obtained coincides with 
\eqref{chimpans}, 
except that the last entry in the $\max$ 
is slightly different; in \cite[Section ~6]{AW11}, however, we show
that it is  bounded by $d\min(m,n+1) +\reg X-1.$ 
Thus, expressed in $\reg X$ the estimate in \cite{AW11} is somewhat 
less sharp than \eqref{chimpans}. 
Note that in \cite{AW11} we used the non-standard convention that
$\reg X$ is $\reg \S/J_X$ instead of $\reg J_X$, cf.\ Section ~\ref{lotta}.

\end{remark}

\begin{remark}\label{skola}
If
\begin{equation}\label{putt}
\codim (Z^f\cap X^\ell)\ge \mu+\ell+1,\quad \ell\ge 0,
\end{equation}
where $X^\ell$ are as in Section~\ref{fundform}, thus either
$X_{\text{sing}}\cap Z^f=\emptyset$ or $m<n$, then one can find
polynomials $Q_j$ such that \eqref{hummer} holds and \eqref{chimpans}
holds with $\mu_0=0$. 
To see this,  take $\rho\ge \deg\Phi +\mu d^{c_\infty}\deg X$ in the proof of
Theorem~A. Then $R^f\phi=0$ on $X_{\text{reg}}$, and thus $R^f\wedge \omega\phi$ has support on $Z^f\cap X^0$. 
Since $R^f\wedge \omega_0\phi$ has bidegree at most $(n,\mu)$ and $\codim (Z^f\cap X^0)\geq \mu+1$ by 
\eqref{putt}, it follows from the dimension  principle that $R^f\w\omega_0\phi=0$. 
Thus
$ R^f\w\omega_1\phi= R^f\w\alpha^1\omega_0\phi$ vanishes outside $X^1$, so again by
\eqref{putt} and the dimension principle we find that
$ R^f\w\omega_1\phi$ vanishes identically. By induction,
$ R^f\w\omega\phi=0$.
\end{remark}

\begin{ex} 
In light of the following example due to Masser, Philippon,
Brownawell, and Koll\'ar, see \cite[page~578]{Brown} or 
\cite[Example~2.3]{Koll}, one can see that
the power $c_{\infty}$ in Theorem~A cannot be improved: 
Let $X=\P^n$ and let $m$ be an integer with $2\le m\le n$. Consider the
$m$ polynomials
$$
z_1^d, \ z_1 z_m^{d-1}-z_2^d,\ldots, z_{m-2}z_m^{d-1}-z_{m-1}^d,\
z_{m-1}z_m^{d-1}-1,
$$
in $\C^n$. The associated projective variety
$\{z_0=z_1=\cdots =z_{m-1}=0\}\subset X_\infty$ has  codimension $m$,  and
hence $c_\infty=m$, cf., \eqref{bokslut}.
It follows from Theorem~A that we have a representation \eqref{hummer}
with  $\Phi=1$ and  $\deg F_jQ_j\le m d^m$ (if $d$ is not too small).
However, if $Q_j$ are any polynomials so that \eqref{hummer} holds with $\Phi=1$, then
by considering the curve
$$
t\mapsto(t^{d^{m-1}-1}, t^{d^{m-2}-1},\ldots,
t^{d-1}, 1/t,0,  \ldots, 0),
$$
one can conclude that $Q_1$ must have degree at least $d^m-d$ so that
$\deg F_1Q_1 \ge d^m$.
\end{ex}

\begin{remark}\label{gbh}
In the proof above $\mu_0$ is derived from the section
$\eta$ in Proposition ~\ref{invarians}. Since we have a fixed embedding
 $X\to\P^N$ we can get a slighly sharper constant $\mu_0'$.  
In fact, if $\omega'$ is an associated structure form we can replace
$\eta$ in the proof by a section $\eta'$ such that 
$\eta'\tau^*\omega'$ is smooth, cf., Remark ~\ref{torsdag}. If $A'$ is the highest
degree of the (in local coordinates) monomial $\tau^*\eta'$  then $\mu_0'=1+A'$.
If $A$ is the maximal degree of $\tau^*\eta$ then $A\le (n-1)A'$. It follows
that $\mu_0\le(n-1)\mu_0'$ so we can however gain at most a factor $n-1$ by considering
the special embedding.
\end{remark}

In \cite{A8} is used   a slight generalization of the Koszul complex 
to deal with
a positive power  $\J^\ell_f$ of $\J_f$, cf. \cite[p.~439]{EL}; this complex is a special case of the Eagon-Northcott complex, see,
e.g., \cite[Appendix~2.6]{Eis}. 
The first mapping in the complex is the natural mapping
$
E^{\otimes \ell}\to \C
$
induced by the $f_j$.
The associated residue current is the push-forward of currents like
$$
\dbar\frac{1}{(f^0)^{k}}\w  smooth
$$
for $\ell\leq k\le\mu+\ell-1$.  By an analogous proof we  get the  following generalization of
Theorem~A.

\begin{thm}\label{borta}
With the notation in Theorem~A, if
$$
|\Phi|/ |F|^{\mu+\mu_0+\ell-1} \text{ is locally bounded on } V,
$$
 then $\Phi\in(F_j)^\ell$ and there are polynomials  $Q_I$ such that
$$
\Phi=\sum_{I_1+\cdots+I_m=\ell} F_1^{I_1}\cdots F_m^{I_m} Q_I
$$
and
\begin{multline*}
\deg (F_1^{I_1}\cdots F_m^{I_m} Q_I)\leq \\
\max \big(\deg \Phi + (\mu+\mu_0+\ell-1) d^{c_\infty} \deg X, d(\min(m,n+1)+\ell-1) -\min(m,n+1)+\reg X \big).
\end{multline*}
\end{thm}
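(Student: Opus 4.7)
The strategy is to adapt the proof of Theorem~\ref{hickelsats} by replacing the ordinary Koszul complex of $f=(f_1,\ldots,f_m)$ with the generalized complex from \cite{A8} adapted to the power $\J_f^\ell$, whose first map is the natural surjection $E^{\otimes\ell}\to\C$ induced by $f$. Homogenize as before: let $f_j=z_0^d F_j(z'/z_0)$ be sections of $\Ok(d)|_X$ and let $\phi=z_0^\rho\Phi(z'/z_0)$ be a section of $\Ok(\rho)|_X$, where $\rho$ is the right-hand side of the claimed degree bound. Our target is to produce global sections $q_I$ of $\Ok(\rho-\ell d)|_X$, indexed by multi-indices $I$ with $|I|=\ell$, such that $\sum_{|I|=\ell} f^I q_I=\phi$ on $X$; dehomogenization then yields the required polynomial identity with $\deg(F^I Q_I)\le\rho$.

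\textbf{Step 1 (residue annihilation).} Form the residue current $R$ attached to the $\ell$th-power complex, whose components are, after pullback under the blow-up $\blow\colon X_+\to X$ along $\J_f$, push-forwards of elementary currents of the form $\dbar\tfrac{1}{(f^0)^k}\w \beta$ with $\beta$ smooth and $\ell\le k\le\mu+\ell-1$, exactly as in the passage preceding the theorem. By the geometric estimate \eqref{elest}, each multiplicity $r_j$ with $Z_j\subset X_\infty$ satisfies \eqref{sunnanvind}, so the choice $\rho\ge\deg\Phi+(\mu+\mu_0+\ell-1)d^{c_\infty}\deg X$ forces $\blow^*\phi$ to vanish to order at least $(\mu+\mu_0+\ell-1)r_j$ on every $W_j$; equivalently, by \eqref{pulka}, $|\phi|\le C|f|^{\mu+\mu_0+\ell-1}$ on $X$. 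The integration-by-parts argument of Lemma~\ref{omljud} then applies verbatim with $\mu$ replaced by $\mu+\ell-1$, since the highest pole appearing in $R$ is of order $\mu+\ell-1$ rather than $\mu$; the uniform denominator $\eta$ from Proposition~\ref{invarians} absorbs the $\mu_0$ in the usual way. This gives $R\w\omega\phi=0$.

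\textbf{Step 2 (cohomological step).} Apply the analogue of Theorem~\ref{fund}, in the variant sketched in Example~\ref{direkt}, to the $\ell$th-power complex. The only change from the $\ell=1$ case is that the first map now joins a bundle of degree $\ell d$ to $\Ok(0)$ (instead of degree $d$ to $\Ok(0)$), so every term in the tower of $\dbar$-equations has its line-bundle degree shifted upward by $(\ell-1)d$. Re-running the computation in Example~\ref{direkt} with this shift, the obstruction $\dbar W=U_{N+1}\phi$ is solvable as soon as $\rho\ge (d-1)\min(m,n+1)+d(\ell-1)+\reg X$, which is exactly the second entry in the max since
\[
(d-1)\min(m,n+1)+d(\ell-1)=d\bigl(\min(m,n+1)+\ell-1\bigr)-\min(m,n+1).
\]

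\textbf{Main obstacle.} The real content lies in Step~1: one must verify that the uniform number $\mu_0$ produced by Proposition~\ref{invarians} and used in Lemma~\ref{omljud} continues to work for every $\ell\ge 1$. This relies on the fact that $\mu_0$ controls the monomial denominator of $\tau^*\omega$ on the resolution $\widetilde X$ and is intrinsic to $X$, while the only effect of passing from $\J_f$ to $\J_f^\ell$ is to increase the pole order of the residue from $\mu$ to $\mu+\ell-1$, which is matched by the shift $\mu\mapsto\mu+\ell-1$ in the hypothesis. Once this is checked, Step~2 is essentially bookkeeping: verifying that Lemma~\ref{ut} and Theorem~\ref{fund} extend to the generalized complex (i.e., that the $\nabla_g$-closed extension of $\phi$ and the final Dolbeault step survive the $(\ell-1)d$ shift in line-bundle degrees).
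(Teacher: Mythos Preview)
Your proposal is correct and follows exactly the approach the paper indicates. The paper's own argument for Theorem~\ref{borta} is nothing more than the sentence ``By an analogous proof we get the following generalization,'' together with the remark that the residue attached to the generalized complex from \cite{A8} is a push-forward of terms $\dbar\frac{1}{(f^0)^k}\wedge smooth$ with $\ell\le k\le\mu+\ell-1$; you have spelled out precisely that analogous proof, correctly tracking both the pole-order shift $\mu\mapsto\mu+\ell-1$ in the residue-annihilation step (with $\mu_0$ unchanged since it only encodes the denominator of $\tau^*\omega$) and the line-bundle shift $(\ell-1)d$ in the cohomological step of Example~\ref{direkt}.
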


There is also an analogous generalization  of part (ii) of Theorem~A.



\section{Proofs  of  Theorem~B and  variations}\label{ormbo}

We first look at  the case when $X$ is smooth, which is due to
Ein-Lazarsfeld \cite{EL}.

\begin{thm}\label{elthm}
Let $X$ be a smooth projective  variety,
let $L\to X$ be an ample Hermitian line bundle, and let 
$A\to X$ be a line bundle that is either ample or big and nef.
Moreover, let $f_1,\ldots,f_m$ be global holomorphic sections of $L$,
and let $\phi$ be a global section of $$
L^{\otimes s}\otimes K_X \otimes A,
$$
where  $s\ge \min(m,n+1)$. If
\begin{equation}\label{bs}
|\phi|\le C |f|^\mu
\end{equation}
on $X$, then there are holomorphic sections $q_j$ of $L^{\otimes (s-1)}\otimes K_X \otimes A$
such that
\begin{equation}\label{likhet}
f_1q_1+\cdots +f_mq_m=\phi.
\end{equation}
\end{thm}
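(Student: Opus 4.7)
The plan is to apply the framework of Section~\ref{divandres} directly, since $X$ is smooth and no embedding is required. I would take $E^f_\bullet, \delta_f$ to be the Koszul complex of Example~\ref{koszulex} generated by $f_1,\ldots,f_m$, tensored with the line bundle $S=L^{\otimes s}\otimes K_X\otimes A$, so that
\begin{equation*}
E_0=L^{\otimes s}\otimes K_X\otimes A, \qquad E_k=\big(L^{\otimes(s-k)}\otimes K_X\otimes A\big)^{\oplus\binom{m}{k}}.
\end{equation*}
The section $\phi$ is a global holomorphic section of $E_0$, and we seek $q\in H^0(X,E_1)$ with $f^1 q=\phi$.

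Step one is to show that $\phi$ annihilates $R^f$. Since $X$ is smooth, Lemma~\ref{omljud} applies with $\mu_0=0$: the estimate \eqref{bs} $|\phi|\le C|f|^\mu$ is exactly the hypothesis needed to conclude $R^f\phi=0$. Step two is to verify the cohomological conditions \eqref{cohovillkor} of Proposition~\ref{glatta}, i.e.,
\begin{equation*}
H^{k-1}\bigl(X,\Ok(E_k)\bigr)=0, \qquad 1\le k\le\min(m,n+1).
\end{equation*}
Since each $E_k$ is a direct sum of copies of $L^{\otimes(s-k)}\otimes K_X\otimes A$ and since $s\ge\min(m,n+1)\ge k$, the exponent $s-k$ is $\ge 0$. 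For $k\le s-1$ the bundle $L^{\otimes(s-k)}\otimes A$ is ample (as the tensor product of an ample and an ample-or-nef bundle), so Kodaira vanishing gives $H^{k-1}(X,K_X\otimes L^{\otimes(s-k)}\otimes A)=0$ for $k-1\ge 1$. For the boundary case $k=s$ (which can only occur when $s=\min(m,n+1)$) the relevant bundle is $K_X\otimes A$, and Kodaira vanishing (if $A$ is ample) or Kawamata-Viehweg vanishing (if $A$ is big and nef) gives the required $H^{s-1}(X,K_X\otimes A)=0$.

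With $R^f\phi=0$ and \eqref{cohovillkor} in hand, Proposition~\ref{glatta} produces a global holomorphic section $q$ of $E_1=\bigoplus_{j=1}^m L^{\otimes(s-1)}\otimes K_X\otimes A$ satisfying $f^1 q=\phi$, which unpacks to $f_1q_1+\cdots+f_mq_m=\phi$ with $q_j\in H^0(X,L^{\otimes(s-1)}\otimes K_X\otimes A)$, as required. The only place where some care is needed is in step two, where one has to separate the range $2\le k\le s-1$ (pure Kodaira) from the endpoint $k=s$ (where the nef-but-not-ample case of $A$ forces us to invoke Kawamata-Viehweg); the middle exponents $s-k\ge 1$ are essentially free and the hypothesis $s\ge\min(m,n+1)$ is precisely what prevents negative twists of $K_X\otimes A$ from appearing.
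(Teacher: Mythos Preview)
Your proof is correct and follows essentially the same approach as the paper's own proof: build the Koszul complex twisted by $L^{\otimes s}\otimes K_X\otimes A$, use Lemma~\ref{omljud} with $\mu_0=0$ to get $R^f\phi=0$, verify \eqref{cohovillkor} via Kodaira (and Kawamata--Viehweg at the top when $A$ is only big and nef), and invoke Proposition~\ref{glatta}. Your treatment of the cohomological step is in fact slightly more detailed than the paper's, correctly isolating the boundary case $k=s=\min(m,n+1)$ where Kawamata--Viehweg is genuinely needed.
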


Let $\J_f$ be the ideal sheaf generated by $f_j$ and assume that
the associated distinguished varieties $Z_k$  have multiplicities $r_k$, cf., Section~\ref{dist}.
If $\phi$ vanishes to (at least) order $r_k\mu$ at a generic point on $Z_k$
for each $k$, then
\eqref{bs} holds, cf., Section~\ref{dist}, and thus we have

\begin{cor}
If $\phi$ vanishes to  order $r_k\mu$ at a generic point on $Z_k$,
for each $k$, then we have a representation
\eqref{likhet}.
\end{cor}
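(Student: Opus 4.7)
The plan is to reduce the corollary directly to Theorem~\ref{elthm} by verifying that the generic vanishing hypothesis forces the pointwise estimate \eqref{bs}. Since $X$ is smooth and $L$ is ample, once we have $|\phi|\le C|f|^\mu$ on $X$, Theorem~\ref{elthm} applies verbatim and produces the desired representation \eqref{likhet}. So the only content to prove is the implication

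\emph{$\phi$ vanishes to order $r_k\mu$ at a generic point of $Z_k$ for each $k$ $\Longrightarrow$ $|\phi|\le C|f|^\mu$ on $X$.}

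To establish this implication I would work on the normalized blow-up $\blow\colon X_+\to X$ of $\J_f$, with exceptional decomposition $W=\sum r_jW_j$ and $\blow^*f=f^0f'$ as in Section~\ref{dist}. The hypothesis says that the pullback $\blow^*\phi$ vanishes to order at least $r_k\mu$ at a generic point of each irreducible Cartier divisor $W_k$. Since $X_+$ is normal and each $W_k$ is an irreducible Cartier divisor, the generic order of vanishing of $\blow^*\phi$ along $W_k$ equals its order of vanishing along all of $W_k$; hence $\blow^*\phi$ vanishes to order at least $r_k\mu$ on the whole of $W_k$ for every $k$. This is precisely the definition recalled in Section~\ref{dist} for $\phi\in\overline{\J_f^{\,\mu}}$, and by the equivalence \eqref{pulka} it is equivalent to the pointwise estimate $|\phi|\le C|f|^\mu$. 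Applying Theorem~\ref{elthm} now yields the representation \eqref{likhet}.

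I do not expect any serious obstacle: the whole argument is just an unpacking of the characterization of the integral closure via distinguished varieties, a fact already laid out in Section~\ref{dist}. The one point to handle cleanly is the passage from generic vanishing on $Z_k$ downstairs to vanishing on all of $W_k$ upstairs, which uses normality of $X_+$ and the fact that $W_k$ is a Cartier divisor (so vanishing of $\blow^*\phi$ is measured by a single integer along $W_k$).
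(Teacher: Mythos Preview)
Your proposal is correct and follows exactly the same route as the paper: the paper's argument is literally the one-line observation (stated just before the corollary) that the generic vanishing hypothesis forces \eqref{bs} by the discussion in Section~\ref{dist}, after which Theorem~\ref{elthm} applies. You have simply unpacked what Section~\ref{dist} (which in turn defers to \cite[Lemma~10.5.2]{Laz}) is asserting, via the normalized blow-up and the characterization \eqref{pulka} of the integral closure. One small wording issue: your sentence ``The hypothesis says that the pullback $\blow^*\phi$ vanishes\ldots'' should read ``implies'' rather than ``says'', since the passage from vanishing at a generic point of $Z_k$ downstairs to $\text{ord}_{W_k}(\blow^*\phi)\ge r_k\mu$ upstairs is exactly the step you flag at the end; it uses that a generic point of $W_k$ lies over a generic point of $Z_k$ and that pullback along a local morphism can only increase the order of vanishing at a point.
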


This corollary is precisely part (iii) of the main theorem in \cite[p.\ 430]{EL}, 
except for that  we
have $\mu r_k$ rather than $(n+1)r_k$, cf., the discussion in
Example~\ref{uppstuds}. 
Using  \eqref{elest} one gets the estimate $r_k\le \deg_L X$.


\begin{proof}[Proof of Theorem~\ref{elthm}]
Let $(E^f_\bullet, \delta_f)$ be the
Koszul complex generated by $f_1,\ldots, f_m$, as in Example~\ref{koszulex},
tensorized with $L^{\otimes s}\otimes A\otimes K_X$,  and
let $R^f$ be the associated residue  current on $X$.
From the hypothesis  \eqref{bs} and Lemma~\ref{omljud} we conclude  that  $R^f\phi=0$.
The bundle $E_k$ is a direct sum of line bundles
$L^{\otimes(s-k)}\otimes A\otimes K_X$ and so all the relevant
cohomology groups \eqref{cohovillkor} vanish  by Kodaira's vanishing theorem,
or, at the top degree, by the  Kawamata-Viehweg  vanishing theorem if $A$ is nef and big.
Thus  Theorem~\ref{elthm} follows from Proposition~\ref{glatta}.
\end{proof}

\begin{proof}[Proof of Theorem~B]

Let $(E_\bullet^f, \delta_f)$ be the
Koszul complex generated by $f_1,\ldots, f_m$ 
tensorized with $L^{\otimes s}$, see Example \ref{koszulex}.
The choice of $s$ guarantees that \eqref{cohovillkor} is satisfied and thus by Theorem ~\ref{fund}~(ii) we get the desired holomorphic solution to \eqref{likhet2} as soon as we have a smooth solution to 
\begin{equation}\label{saattdee}
\nabla_f W = \phi
\end{equation}
on $X$. Indeed, recall that Theorem ~\ref{fund}~(ii) only depends on
$X$ and not on the embedding $i:X\to Y$.  
Hence to prove the theorem it suffices to show that there is a $\mu_0$
such that we can find a smooth solution to \eqref{saattdee} for each
global section $\phi$ of $L^{\otimes s}$ that satisfies \eqref{bs2}. 
As in the proof of Theorem~A the strategy will be to show that $\phi$
annihilates a certain residue current, which gives a smooth solution
to \eqref{saattdee}. 
Note that we cannot apply Theorem~\ref{fund}~(i), since a~priori $L$ and the sections $f_j$ are only
defined on $X$.

Let us start by giving an overview of the proof below. First, there is an
embedding of $X$ into a smooth
manifold $Y$ so that $L$ extends to $Y$. We cannot assume
that $f$ extends holomorphically to $Y$ but in view of Lemma
~\ref{ut}, if $\big (\Ok (E^h_\bullet), h\big )$ is a
Hermitian resolution of  $\Ok^Y/\J_X$, then there is a $\nabla_h$-closed extension $\tilde f$. Given $\tilde f$ we
construct a Koszul complex $(E_\bullet^H\otimes \Lambda^\bullet E,
\delta_{\tilde f})$ that extends
$(E_\bullet^f,\delta_f)$, and following the ideas in Example ~\ref{koszulex}  we
construct a residue current $\widetilde R\wedge R^g$, where
$\big (\Ok(E_\bullet^g),g\big )$ is again a resolution of $\Ok^Y/\J_X$. 
From the
construction it follows that 
if 
\begin{equation}\label{strunt}
\widetilde R\wedge R^g\phi =0,  
\end{equation} 
then there is a current solution to 
\begin{equation}\label{ljusgul}
\nabla W =\Phi,
\end{equation} 
 where $\nabla=g+\delta_{\tilde f}+h-\dbar$. 
From such a solution we obtain a smooth solution to \eqref{ljusgul}, which in turn implies
that there is a smooth solution to \eqref{saattdee}. 
Finally we show that there is a $\mu_0$, only depending on $X$, such
that \eqref{strunt}, and thus \eqref{saattdee},  holds as soon as $\phi$ satisfies \eqref{bs2}.

\medskip 

We first discuss the extension of $L$. 
If  $M$ is large enough, there are embeddings $i_j\colon X\to\P^{N_j}$, $j=1,2$,  such that
$\Ok(1)_{\P^{N_1}}|_X=L^M$ and $\Ok(1)_{\P^{N_2}}|_X=L^{M+1}$.
If $\pi_j\colon \P^{N_1}\times\P^{N_2}\to \P^{N_j}$, then
$\L:=\pi_2^*\Ok(1)_{\P^{N_2}}\otimes \pi^*_1\Ok(-1)_{\P^{N_1}}$ is a line bundle
over $Y:=\P^{N_1}\times\P^{N_2}$
and its restriction to  $X\simeq \Delta_{X\times X}\subset Y$ is precisely $L$.  This argument
was communicated to us by  R.\ Lazarsfeld.

Let $\big (\Ok(E_\bullet^h), h\big )$ be a Hermitian resolution of $\Ok^Y/\J_X$ in
$Y$ as in Section \ref{fundform}. 
In view of Lemma~\ref{ut},  we can choose smooth $\nabla_h$-closed extensions
$\tilde f_j\in\oplus_i\E_{0,i}(E^h_i\otimes \L)$  of $f_j$ to $Y$, as
defined in Section~\ref{divandres}. 
Let $E^1,\ldots, E^m$ be (extensions to $Y$ of) the trivial line bundles used to define
$(E_\bullet^f, \delta_f)$ as in Example~\ref{koszulex}, with basis elements $e_1, \ldots, e_m$, respectively, and let $\tilde f$ be the section $\tilde f:=\tilde f_j e_j^*$ of $E^h_\bullet\otimes E^*$, where $E:=\bigoplus_{j=1}^m \L^{-1}\otimes E^j$ and $e_j^*$ are the dual basis elements. Note that each $\tilde f_j$ has even degree so that $\tilde f$ has odd degree. 

\smallskip 

We next want to construct a 
Koszul complex of $\tilde f$ as an extension of $(E^f_\bullet,
\delta_f)$. 
To this end we will need to take products of sections of
$E^h_\bullet$. We therefore introduce $E^H_\bullet:= \bigcup_{k\geq 1}
(E^h_\bullet)^{\otimes k}$, where the tensor products 
$(E^h_\bullet)^{\otimes k}$ are as in Section ~\ref{tenprod}. Since
$E^h_0$ is the trivial line bundle, $(E^h_\bullet)^{\otimes k}$ is a
natural subcomplex of $(E^h_\bullet)^{\otimes (k+1)}$ and thus the
definition makes sense. 
Next consider the tensor product complexes $E^H_\bullet\otimes
\Lambda^{k}E$, see Section \ref{tenprod} and let $\delta_{\tilde f}: E^H_\bullet\otimes \Lambda^{k}E \to E^H_\bullet\otimes \Lambda^{k-1} E$ be contraction with $\tilde f$, i.e., 
for a section $\xi=\sum_{I=\{ i_1,\ldots , i_k\}}\xi_I\cdot e_I$,
where $\xi_I$ takes values in $E^H_\bullet$ and $e_I=e_{i_1}\wedge
\cdots\wedge e_{i_k}$, of $E^H_\bullet\otimes \Lambda^k E$,
$\delta_{\tilde f}\xi = \sum_I (-1)^{\deg \xi_I} \xi_I \sum _j
(-1)^{j-1} \tilde f_{i_j} \cdot e_{I\setminus i_j}$. 
Note that $\delta_{\tilde f}$ is an anti-derivation. 
As long as we restrict to $X$ we can write  $\tilde f= f-f'$, where $f:= \sum f_j e_j^*$ and 
$f'$ has positive form  degree. Let $\delta_f$ and $\delta_{f'}$  
be defined analogously to $\delta_{\tilde f}$;  
note that, restricted to $X$, $\delta_f$ is just the differential in the regular Koszul
complex $(E_\bullet^f, \delta_f)$. 

\smallskip 
 
Inspired by Example ~\ref{koszulex} we now construct the
residue current $\widetilde R\wedge R^g$. We start by defining an  
$(E^H_\bullet\otimes \Lambda^\bullet E)$-valued form
$\tilde u$ which will play the role of $u$; in fact $\tilde u$
will take values in $(E^h_\bullet)^{\otimes n}\otimes \Lambda^k E$. 
First, let
$\sigma$ be the section of $E$ over $X\setminus Z$ of pointwise minimal norm such that $\delta_f\sigma=1$ there, cf. Example~\ref{koszulex}. 
Then
$$
\delta_{\tilde f}\sigma=\delta_f \sigma-\delta_{f'}\sigma=1-\delta_{f'}\sigma
$$
on $X\setminus Z$. Notice that $\delta_{f'} \sigma$ has even degree, and form bidegree at least $(0,1)$, so that
$$
\frac{1}{1-\delta_{f'}\sigma}=1+\delta_{f'}\sigma+(\delta_{f'}\sigma)^2+\cdots +(\delta_{f'}\sigma)^n
$$
is a form on $X\setminus Z$ with values in $E^H_\bullet\otimes \Lambda^\bullet E$. 
Let  
$
\tilde\sigma:=\sigma/(1-\delta_{f'}\sigma)
$
on $X\setminus Z$; then
$
\delta_{\tilde f}\tilde\sigma=1
$ 
on $X\setminus Z$.   
Next, let  
$$
\tilde u=\frac{\tilde \sigma}{(\delta_{\tilde f}+\nabla_h)\tilde\sigma}=
\sum_{k\geq 1} \tilde\sigma\wedge(-\nabla_h \tilde\sigma)^{k-1},  
$$
cf., Example~\ref{koszulex} and \cite{A2}; now $\nabla_h$ plays the
role of $-\dbar$. 
Note that $\delta_{\tilde f}$ anti-commutes with (the extension to $E^H_\bullet\otimes \Lambda^\bullet E$ of) $\nabla_h$, i.e., $\delta_{\tilde f}\circ \nabla_h=-\nabla_h\circ \delta_{\tilde f}$. 
It follows that $(\delta_{\tilde f}+\nabla_h)^2=0$ and so  
$$
(\delta_{\tilde f}+\nabla_h)\tilde u=1
$$
on $X\setminus Z$, cf. Section~\ref{olga}.

Let $\big (\Ok(E_\bullet^g),g\big )$ be a Hermitian resolution
of $\Ok^Y/\J_X$ in $Y$, let $R^g$ be the residue current associated with the resolution
$\big (\Ok(E^g_\bullet), g\big )$, and let $\omega$ be the associated structure form. 
Recall from Section~\ref{fundform} that if $\alpha$ is a
(sufficiently) 
smooth form on $X$, then $\alpha\wedge R^g$ is a well-defined current in $Y$; in particular,  $\chi(|f|^{2}/\epsilon) \tilde u\w R^g$ is a well-defined current in $Y$ with values in $E^H_\bullet\otimes \Lambda^\bullet E\otimes E^g_\bullet$. 
Letting  
\begin{equation}\label{messa}
\nabla=g+\delta_{\tilde f}+\nabla_h=g+\delta_{\tilde f}+h-\dbar, 
\end{equation}
note that $\nabla^2=0$ and that 
\begin{equation}\label{bengalo}
\nabla\big (\chi(|f|^2/\epsilon\big )\tilde u\w R^g+U^g)=I-\tilde R^\epsilon\w R^g,
\end{equation}
where 
$
\tilde R^\epsilon=I-\chi(|f|^2/\epsilon)I+\dbar\chi(|f|^2/\epsilon)\wedge\tilde u.
$

We claim that $\chi(|f|^2/\epsilon)\tilde u\w R^g$ has a limit when $\epsilon\to 0$. 
To see this, recall from Section~\ref{fundform}, using the notation from that section, that $\chi(|f|^2/\epsilon)\tilde u\w R^g\w\varOmega=i_*(\chi(|f|^2/\epsilon)\tilde u\w\omega)$. 
Next, notice that
\begin{equation}\label{pladder}
\tilde\sigma\w(-\nabla_h\tilde\sigma)^{k-1}=\sigma\w(\dbar\sigma)^{k-1}\wedge 
\sum_{j=0}^n c_j^k (\delta_{f'} \sigma)^j,
\end{equation}
for some numbers $c_j^k$, since $\sigma\w\sigma=0$ and $\sigma$ 
has degree $0$ in $E_\bullet^h$.  
Let $\pi:\widetilde X\to X$ be a smooth modification such that $\pi^*\omega$ is semi-meromorphic and $\pi^*\sigma$ is of the form $\sigma'/f^0$, cf. Section~\ref{dist}. 
Then $\pi^*\tilde u$ is a finite sum of terms $\gamma_k/(f^0)^k$,
where $\gamma_k$ are smooth, and hence  
$\lim_{\epsilon\to 0}\pi^*(\chi(|f|^2/\epsilon)\w\tilde u\w\omega)$
exists, see, e.g., \cite{BjSam}. Since $\varOmega$ is nonvanishing it follows that the limit of $\chi(|f|^2/\epsilon)\tilde u\w R^g$ exists.

Let 
$$
\tilde U\w R^g=\lim_{\epsilon\to 0}\chi(|f|^2/\epsilon)\tilde u\w R^g, 
\quad
\tilde  R\w R^g=\lim_{\epsilon\to 0}\tilde R^\epsilon\w R^g.
$$
Then by \eqref{bengalo}, 
$$
\nabla(\tilde U\w R^g+U^g)=I-\tilde R\w R^g,   
$$
and if $\Phi$ is a smooth $\nabla_g$-closed extension of $\phi$ as in
Lemma ~\ref{ut} (regarded as a section of  $\L^{\otimes s}\otimes E^H_\bullet\otimes \Lambda^\bullet E\otimes E^g_\bullet$), 
it follows that
\begin{equation}\label{gurka1}
\nabla\big((\tilde U\w R^g+U^g)\w\Phi\big )=\Phi
\end{equation}
in $Y$ as soon as \eqref{strunt} is satisfied, 
since, as was noted in the proof of Lemma~\ref{ut},  $R^g\w\Phi= R^g\phi$.


\smallskip


By a slight modification of Proposition~\ref{urlaka}, if we have a
current solution to \eqref{ljusgul} we also have a smooth solution. 
To see this, let $E^F_\bullet=\Lambda^\bullet E\otimes E^g_\bullet$ and let $\M_\bullet$ and $\M^\E_\bullet$ be defined as in Section~\ref{sheafcomplex}, but for the complex $E^H_\bullet$ instead of $E^f_\bullet$. Then we have the double complex   
$$
\Be_{\ell,k}:=\oplus_j\Cu_{0,j}(E^H_{j+k}\otimes E_\ell^F)=\M_k(E^F_\ell)
$$
with mappings $\nabla_h\colon \Be_{\ell,k}\to\Be_{\ell,k-1}$ and $F:=g+\delta_{\tilde f}\colon\Be_{\ell,k}\to\Be_{\ell-1,k}$; indeed note that $\nabla_h\circ F=-F\circ \nabla_h$. If $\Be_j:=\bigoplus_{\ell+k=j}\Be_{\ell, k}$ we get the associated total complex
$$
\ldots 
\stackrel{\nabla}{\longrightarrow} \Be_j \stackrel{\nabla}{\longrightarrow}
\Be_{j-1}\stackrel{\nabla}{\longrightarrow}\ldots ~~~~~,  
$$
with $\nabla$ as in \eqref{messa}. 
Analogously let 
$\Be^\E_{\ell,k}:=\oplus_j\E_{0,j}(E^H_{j+k}\otimes
E_\ell^F)=\M^\E_k(E^F_\ell)$ with total complex
$\Be^\E_\bullet$. Moreover, let $\M_\bullet(Y,E_\ell^F)$,
$\M_\bullet^\E(Y,E_\ell^F)$, $\Be_\bullet(Y)$, and $\Be^\E_\bullet(Y)$ be the associated complexes of global sections. 
Note that we have natural mappings 
\begin{equation}\label{internet}
H^j(\Be^\E_\bullet(Y))\to H^j(\Be_\bullet(Y)),  \quad j\in \Z.
\end{equation}
Proposition~\ref{urlaka} implies that the natural mappings 
$H^k(\M^\E_\bullet(Y,E^F_\ell))\to H^k(\M_\bullet(Y,E^F_\ell))$ are
isomorphisms. Now, by repeating the proof of Proposition~\ref{urlaka}
with $\M_\bullet$, $\M^\E_\bullet$, $\Cu_{0,\bullet}$, and
$\E_{0,\bullet}$ replaced by $\Be_\bullet$, $\Be^\E_\bullet$,
$\M_\bullet$, and $\M^\E_\bullet$, respectively, using that the double
complex $\Be_{\ell, k}$ is bounded in the $\ell$-direction, we can
therefore prove that the mappings \eqref{internet} are in fact
isomorphisms. Hence the current solution \eqref{gurka1} gives a smooth
solution to \eqref{ljusgul}. 

Next we will show that a smooth solution to \eqref{ljusgul} gives a
smooth solution to \eqref{saattdee}. 
Let lower indices $(i, j, k)$ denote components in $\L^{\otimes
  s}\otimes E^H_i\otimes \Lambda^j E\otimes E^g_k$. Then $\Phi=\Phi_{0,0,0}+\Phi_{0,0, 1}
+ \cdots + \Phi_{0,0,n}$, where $\Phi_{0,0,k}$ has form bidegree
$(0,k)$. Notice that we have the decomposition $\tilde f=f_0-f'$ in
$Y$, where $f_0$ denotes the $0$-component of $\tilde f$ and hence is
a smooth extension of $f$ to $Y$.  If $\Psi$ is a smooth solution to
\eqref{ljusgul} it follows that 
\begin{eqnarray}
\label{kalla}
h \Psi_{1,0,0}+\delta_{f_0} \Psi_{0,1,0}+ g\Psi_{0,0,1} &= &\Phi_{0,0,0}, 
\\\label{skalla} 
\quad h\Psi_{1,j,0}+\delta_{f_0}\Psi_{0,j+1,0}+
g\Psi_{0,j,1}-\dbar\Psi_{0,j,0}& = & 0,\ j\ge 1. 
\end{eqnarray}
Indeed, note that $\delta_{f'}\Psi_{i,j,k}$ has positive degree in $E^H_\bullet$ for all nonvanishing $\Psi_{i,j,k}$.  
Since $\Psi$ is smooth, we can define the smooth forms
$W_j:=i^*\Psi_{0,j,0}$ on $X$. Note that $\L^{\otimes s}\otimes
\Lambda^jE|_X=E^f_j$, so that $W_j$ takes values in $E^f_j$. 
Since $g \Psi_{0,j,1}=g^1 \Psi_{0,j,1}$ and $h \Psi_{1,j,0}=h^1
\Psi_{1,j,0}$ are in $\E(\J_X)$ and thus vanish on $X$, 
\eqref{kalla} and \eqref{skalla} implies 
$$
\delta_f W_1=\phi, \quad  \delta_f W_{j+1}-\dbar W_j=0, \ j\ge 1.  
$$


\smallskip 

To sum up so far, we have shown that there is a smooth solution to
\eqref{saattdee} if \eqref{strunt} holds. 

\noindent 
\textbf{Claim}:  
\emph{There is a $\mu_0$, only depending on $X$, such that
\eqref{strunt} holds as soon as $\phi$ satisfies \eqref{bs2}.}

\noindent
Taking the claim for granted we get that 
there is a $\mu_0$ such that if 
$\phi$ satisfies \eqref{bs2}, then there is a smooth solution to
\eqref{saattdee}; this concludes the proof of Theorem~B.

The claim is essentially Lemma ~\ref{omljud}, but now $R^f$ is replaced
by the current $\widetilde R$. Also the proof is analogous to the proof
of the lemma and the choice of $\mu_0$ in the proof of the lemma will do here as
well; the crucial 
observation is that the singularities of $\widetilde R$ can be
controlled in a similar way to the singularities of $R^f$.

To prove the claim, first note that \eqref{strunt} is equivalent to that $\tilde R\w R^g
\w\varOmega\phi=\lim_{\epsilon\to 0} i_*(\tilde R^\epsilon
\w\omega\phi)$ vanishes, cf.\ Section ~\ref{boork}. 
Let $\tau:\widetilde X\to X$ be a smooth modification as in
Proposition~\ref{invarians}, so that locally
$\tau^*\omega={smooth}/{s^{\alpha+1}}$, where $s^{\alpha+1}$ is a
local representation of the section $\eta$, as in the proof of Lemma~\ref{omljud}. 
Following that proof, the action of $\tilde R^\epsilon\w\omega\phi$  on a test form is a sum of integrals like (suppressing $\tau^*$ for simplicity) 
\begin{equation}\label{nabb}
\int_{\widetilde  X} \frac{ds_1\w\ldots\w ds_n}{s_1\cdots s_n}\w \partial^{\alpha}_s 
\big(\tilde R^{\epsilon}\phi\w\xi\big),
\end{equation}
where $\xi$ is smooth, cf.\ \eqref{piktur}. The components of
$\widetilde X$ where $f$ vanishes identically are taken care of as in
the proof of Lemma \ref{omljud}. We may therefore assume that
$f\not\equiv 0$.  


In view of \eqref{pladder}, $\widetilde R^\epsilon$ is a finite sum of terms like
$$
\dbar\chi(|f|^2/\epsilon)\w \sigma \w(\dbar\sigma)^{k-1}\w
(\delta_{f'} \sigma)^j,
$$
where $k+j\le n$ for degree reasons; indeed, recall that $f'$ has form degree
at least $(0,1)$. 
Note that $\delta_{f'}\sigma$ is of the form $(\gamma\cdot \sigma)$,
where $\gamma$ is smooth. 
Thus by arguments as in the proof of Lemma~\ref{omljud}, cf. \eqref{mint} and
\eqref{mynt}, 
we get that  
$\partial^\ell_s\tilde R^\epsilon$ is like
$1/|f|^{n+|\ell|+1}$ and with support where $|f|^2\sim \epsilon$. 
As in that proof, we choose $\mu_0$ so that $\mu_0\geq n+ |\alpha|+1$ for all local
representations $\eta=s^{\alpha+1}$. If $\phi$ satisfies \eqref{bs2},
then $|\partial^\ell_s \phi|\leq C|f|^{n+|\alpha|-|\ell|+1}$, cf.\ the proof of Lemma~\ref{omljud}. Now
by dominated convergence \eqref{nabb} tends to zero when $\epsilon\to
0$, and since the choice of $\mu_0$ only depends on the section $\eta$
and $n$ and not on the
embedding $i:X\to Y$, the resolutions $\big ( \Ok(E^h_\bullet),
h\big )$, $\big ( \Ok(E^g_\bullet), g\big )$ or the
extension $\tilde f$, 
the claim follows.

\end{proof}


\begin{remark} If $(E^h_\bullet ,h)$ is a Koszul complex, then we just
simply take $E^H_\bullet=E^h_\bullet$, since he desired "product"
already exists within $E^h_\bullet$.

\end{remark}

\begin{remark}

Assume that $i:X\to Y$ is an embedding
such that all ample line bundles on $X$ extend to $Y$. Following the
proof above we  
then obtain Theorem~B without relaying on the quite involved
Proposition ~\ref{invarians}, since we can then define the $\mu_0$
from the singularities of one fixed structure form, cf.\ Remark ~\ref{gbh}.  
It is of course enough that there is a finite number of embeddings
of $X$ into smooth manifolds such that
each ample line bundle extends to at least one of them.

\end{remark}

In analogy with Theorem ~\ref{borta} we also have the following generalizations
of Theorem ~\ref{elthm} and Theorem ~B.

\begin{thm} With the notation in Theorem~\ref{elthm}, if $\phi$ is a section of
$L^{\otimes s}\otimes K_X\otimes A$, where $s\ge \min(m,n+1)+\ell-1$, and
$$
|\phi|\le C|f|^{\mu+\ell-1},
$$
then there are holomorphic sections $q_I$ of $L^{\otimes (s-\ell)}\otimes K_X\otimes A$, such that
\begin{equation}\label{bengan}
\phi=\sum_{I_1+\cdots + I_m=\ell} f_1^{I_1}\cdots f_m^{I_m}q_I.
\end{equation}

With the notation in Theorem~B, if  $\phi$ is a section of $L^{\otimes s}$
with $s\ge \nu_L+\min(m,n+1)+\ell-1$ such that
$$
|\phi|\le C|f|^{\mu_0+\mu+\ell-1},
$$
then there are holomorphic sections $q_I$ of $L^{\otimes (s-\ell)}$ such that \eqref{bengan} holds.
\end{thm}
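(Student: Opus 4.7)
The plan is to run the proofs of Theorems~\ref{elthm} and~\ref{elsing} essentially unchanged, but with the ordinary Koszul complex replaced by the generalized Koszul complex associated to the $\ell$-th power of the ideal, exactly as was done in the passage from Theorem~\ref{hickelsats} to Theorem~\ref{borta}. Concretely, I would take $E^f_\bullet$ to be the complex from \cite{A8} whose first mapping $E_1\to E_0$ is the natural contraction $E^{\otimes \ell}\to \C$ induced by $(f_1,\ldots,f_m)$, tensorized with $L^{\otimes s}\otimes K_X\otimes A$ in the smooth case and with $L^{\otimes s}$ in the general case. The image of $f^1$ is then $\J_f^\ell$, so any global solution $q$ of $f^1q=\phi$ produces a representation \eqref{bengan}.

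First I would check the cohomological hypothesis. The higher bundles $E_k$ in the $\ell$-th order complex are direct sums of line bundles of the form $L^{\otimes(s-k-\ell+1)}\otimes K_X\otimes A$ (resp.\ $L^{\otimes(s-k-\ell+1)}$) for $k=1,\ldots,\min(m,n+1)+\ell-1$. The shift by $\ell-1$ in the lower bound on $s$ is precisely what compensates for the enlarged complex: under $s\ge \min(m,n+1)+\ell-1$ each such bundle is $K_X\otimes A\otimes(\text{ample})$ in the smooth case or $L^{\otimes t}$ with $t\ge \nu_L+1$ in the singular case, so the cohomology groups needed in Proposition~\ref{glatta}, respectively in the last step of the proof of Theorem~\ref{elsing}, vanish by Kodaira/Kawamata--Viehweg or by the definition of $\nu_L$.

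The main step is to verify the residue annihilation $R^{f,\ell}\wedge\omega\,\phi=0$ under $|\phi|\le C|f|^{\mu+\mu_0+\ell-1}$ (and with $\mu_0=0$ when $X$ is smooth). As recalled just before Theorem~\ref{borta}, the associated current is a push-forward, under a suitable log-resolution $\blow\colon X_+\to X$, of elementary pieces
\[
\bar\partial\frac{1}{(f^0)^k}\wedge(\text{smooth}),\qquad \ell\le k\le \mu+\ell-1,
\]
so in the smooth case \eqref{pulka} and the calculus rule \eqref{kalkyl} kill $R^{f,\ell}\phi$ term by term exactly as in Lemma~\ref{omljud}. In the singular case I would repeat the integration-by-parts argument from the proof of Lemma~\ref{omljud} with the universal denominator $\eta$ of $\tau^*\omega$ supplied by Proposition~\ref{invarians}; the only change is that the smooth Brian\c con--Skoda theorem on $\widetilde X$ is invoked for the ideal $(f)^{\mu+|\alpha|+\ell}$ rather than $(f)^{\mu+|\alpha|+1}$, which is precisely why the exponent in \eqref{bs2} gets shifted by $\ell-1$.

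Once $R^{f,\ell}\wedge\omega\,\phi=0$ is in place, the rest is verbatim: in part one apply Proposition~\ref{glatta} on the smooth $X$; in part two build the $\nabla_g$-closed extension $\Phi$ of $\phi$ as in Lemma~\ref{ut}, construct $\tilde f$, $\tilde u$ and the product current $\tilde R\wedge R^g$ in the ambient $Y$ as in the proof of Theorem~\ref{elsing}, and conclude that the smooth equation $\nabla_f W=\phi$ admits a global solution on $X$. The main obstacle I anticipate is purely bookkeeping: showing that, in the singular-case construction, the residue $\tilde R\wedge R^g$ attached to the $\ell$-th order complex is annihilated by $\phi$ with a $\mu_0$ independent of $L$ and $\ell$. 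This reduces, however, to the very same homogeneity estimate for $\partial^{\alpha}_s\tilde R^{\epsilon}$ used at the end of the proof of Theorem~\ref{elsing}, since the monomial $\eta$ furnished by Proposition~\ref{invarians} depends only on $X$ and not on the power $\ell$.
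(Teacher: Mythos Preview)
Your approach is exactly what the paper indicates (it gives no proof beyond the one-line remark that the theorem follows ``in analogy with Theorem~\ref{borta}'' via the complex from \cite{A8}), and your residue-annihilation and $\mu_0$-independence arguments are correct. There is, however, a bookkeeping slip in the cohomology check: with $E_k$ a sum of copies of $L^{\otimes(s-k-\ell+1)}\otimes K_X\otimes A$ \emph{and} $k$ running up to $\min(m,n+1)+\ell-1$, the minimal $L$-exponent would be $s-\min(m,n+1)-2(\ell-1)$, which is not forced nonnegative by $s\ge\min(m,n+1)+\ell-1$ once $\ell\ge2$. In the complex of \cite{A8} the level $k$ still runs only from $1$ to $\min(m,n+1)$ --- the residue pieces $\bar\partial(1/(f^0)^j)\wedge\text{smooth}$ with $\ell\le j\le\mu+\ell-1$ have bidegree $(0,j-\ell+1)$, so the top form-degree is $\mu$ just as in the ordinary Koszul case --- while the $L$-twist at level $k$ is $-(k+\ell-1)$. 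With this correction the Kodaira/Kawamata--Viehweg (resp.\ $\nu_L$) vanishing requires precisely $s\ge\min(m,n+1)+\ell-1$ (resp.\ $s\ge\nu_L+\min(m,n+1)+\ell-1$), and the rest of your argument goes through unchanged.
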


\def\listing#1#2#3{{\sc #1}:\ {\it #2},\ #3.}


\begin{thebibliography}{9999}








\bibitem{A2}\listing{M.\ Andersson}
{Residue currents and ideals of holomorphic functions}
{Bull.\  Sci. Math.\  {\bf 128}, (2004), 481--512}








\bibitem{A3}\listing{M.\ Andersson}
{The membership problem for polynomial ideals in terms of
residue currents}
{Ann.\  Inst.\  Fourier  {\bf 56} (2006), 101--119}













\bibitem{A8}\listing{M.\ Andersson}
{Explicit versions of the Brian\c con-Skoda theorem with variations}
{Michigan Math.\ J.\  {\bf 54(2)} (2006), 361-373}




\bibitem{A12}\listing{M.\ Andersson}
{A residue criterion for strong holomorphicity}
{Arkiv Mat.\  {\bf 48} (2010), 1--15}






\bibitem{AG}\listing{M.\ Andersson \& E.\ G\"otmark}
{Explicit representation of membership of polynomial ideals}
{Math.\ Ann.\  {\bf 349} (2011), 345--365}
{}{}{}{}










\bibitem{AS}\listing{M.\ Andersson \& H.\  Samuelsson}
{A Dolbeault-Grothendieck lemma on a complex space via Koppelman
formulas}{Invent.\ Math.\ {\bf 190} (2012), 261--297} 






\bibitem{ASS}\listing{M.\ Andersson \& H.\  Samuelsson \& J.\ Sznajdman}
{On the Brian\c con-Skoda theorem on a singular variety}
{Ann.\ Inst.\ Fourier {\bf 60} (2010), 417--432}






\bibitem{AW1}\listing{M.\ Andersson, E.\ Wulcan}
{Residue currents with prescribed annihilator ideals}
{Ann.\ Sci.\ \'{E}cole Norm.\ Sup.\
{\bf 40} (2007), 985--1007}




\bibitem{AW2}\listing{M.\ Andersson \& E.\ Wulcan}
{Decomposition of  residue currents}
{J.\ Reine Angew.\ Math.\  {\bf 638} (2010), 103--118}


\bibitem{AW11}\listing{M.\ Andersson, E.\ Wulcan}
{On the effective membership problem on singular varieties} 
{arXiv:1107.0388v2} 







\bibitem{Bj}\listing{J.-B.\  Bj\"ork}
{Residues and $\mathcal D$-modules}
{The legacy of Niels Henrik Abel,  605¡¹651, Springer, Berlin, 2004}
{}{}{}{} 




\bibitem{BjSam}\listing{J.-E.\  Bj\"ork \& H.\  Samuelsson}
{Regularizations of residue currents}
{J. Reine Angew. Math.  {\bf 649}  (2010), 33--54}



\bibitem{BS}\listing{J.\  Brian\c con \& H.\ Skoda}
{Sur la cl\^oture int\'egrale d'un id\'eal de germes de fonctions
holomorphes en un point de $\C\sp{n}$}
{ C.\ R.\ Acad.\ Sci.\ Paris S\'er. A 278 (1974),
949--951}


\bibitem{Brown}\listing{W.\ D.\ Brownawell} 
{Bounds for the degrees in the Nullstellensatz} 
{Ann. of Math.  {\bf 126} (1987), no. 3, 577--591} 




\bibitem{Dem}\listing{J-P Demailly}
{Complex Analytic and Differential Geometry}
{Monograph  Grenoble (1997)}







\bibitem{EL}\listing{L.\ Ein \& R.\  Lazarsfeld}
{A geometric effective Nullstellensatz}
{Invent.\ Math.\ {\bf 135} (1999), 427--448}





\bibitem{Eis}\listing{D.\ Eisenbud}
{Commutative algebra. With a view toward algebraic geometry}
{Graduate Texts in Mathematics, 160. Springer-Verlag, New York, 1995}{}{}{}




\bibitem{Eis2}\listing{D.\ Eisenbud}
{The geometry of syzygies. A second course in commutative algebra and algebraic geometry}
{Graduate Texts in Mathematics, 229. Springer-Verlag, New York, 2005}












\bibitem{HP}\listing{G.\ Henkin \& M.\ Passare}
{Abelian differentials on singular varieties and variations on a theorem of Lie-Griffiths}
{Invent.\ Math.\  {\bf 135} (1999), 297--328}





\bibitem{Her}\listing{G.\ Hermann}
{Die Frage der endlich vielen Schritte in der Theorie der Polynomideale}
{Math.\ Ann., {\bf 95} (1926), 736--788}





\bibitem{Hick}\listing{M.\ Hickel}
{Solution d'une conjecture de C.\ Berenstein-A.\ Yger et
invariants de contact \`a l'infini}
{Ann.\ Inst.\ Fourier {\bf 51} (2001), 707--744}



\bibitem{Hun}\listing{C.\ Huneke}
{Uniform bounds in Noetherian rings}
{Invent.\ Math.\  {\bf 107}  (1992) 203--223}



\bibitem{Jel}\listing{Z.\ Jelonek}
{On the effective Nullstellensatz}
{Invent.\ Math. {\bf 162} 1--17 (2005)}








\bibitem{Koll}\listing{J.\ Koll\'ar}
{Sharp effective Nullstellensatz}
{J.\ American Math.\ Soc. {\bf 1} (1988), 963--975}






\bibitem{Lark}\listing{R.\ L\"ark\"ang}
{On the duality theorem on an analytic variety}
{Math.\ Ann.\ {\bf 355} (2013), 215--234} 



\bibitem{Laz}\listing{R.\ Lazarsfeld}
{Positivity in algebraic geometry I and II}
{Springer-Verlag 2004}{}{}{}{}{}





\bibitem{Macaul}\listing{F.S.\ Macaulay}
{The algebraic theory of modular systems}
{Cambridge Univ.\ Press, Cambridge 1916}{}{}



\bibitem{MM}\listing{E.\ Mayr \& A.\ Mayer}
{The complexity of the word problem for commutative semigroups
and polynomial ideals}
{Adv.\ in math. {\bf 46} (1982), 305--329}









\bibitem{PTY}\listing{M.\ Passare \& A.\ Tsikh \& A.\ Yger}
{Residue currents of the Bochner-Martinelli type}
{Publicacions Mat.\ {\bf 44} (2000), 85--117}



\bibitem{Sombra}\listing{M.\ Sombra}
{A sparse effective Nullstellensatz}
{Adv.\ in Appl.\ Math.\ {\bf 22} (1999) 271--295}



\bibitem{Sz}\listing{J.\ Sznajdman} 
{The Brian{\c c}on-Skoda number of analytic irreducible planar curves}
{Ann.\ Inst.\ Fourier, to appear, available at arXiv:1201.3288}



\bibitem{EW1}\listing{E.\ Wulcan}
{Sparse effective membership problems via residue currents}
{Math.\ Ann.\ {\bf 350} (2011), 661--682}



\bibitem{EW2}\listing{E.\ Wulcan}
{Some variants of Macaulay's and Max Noether's theorems}
{ J. Commut. Algebra {\bf 2} (2010), no. 4, 567--580}








\end{thebibliography}
\end{document}